\numberwithin{equation}{section}
\numberwithin{table}{section}
\numberwithin{figure}{section}
\newtheorem{theorem}{Theorem}[section]
\newtheorem{definition}[theorem]{Definition}
\newtheorem{prop}[theorem]{Proposition}
\newtheorem{lemma}[theorem]{Lemma}
\crefname{lemma}{lemma}{lemmata}
\Crefname{lemma}{Lemma}{Lemmata}
\crefname{corollary}{corollary}{corollaries}
\Crefname{corollary}{Corollary}{Corollaries}
\theoremstyle{definition}
\newtheorem{remark}[theorem]{Remark}
\theoremstyle{definition}
\newtheorem{example}[theorem]{Example}
\title{The Pantelides algorithm for delay differential-algebraic equations}
\author{Ines Ahrens\footnotemark[1]~\footnotemark[2] \and Benjamin Unger\footnotemark[1]~\footnotemark[3]}
\date{\today}
\begin{document}

\maketitle
\renewcommand{\thefootnote}{\fnsymbol{footnote}}
\footnotetext[1]{Institut f\"ur Mathematik,
Technische Universität Berlin, Str.\ des 17.~Juni~136,
10623~Berlin,
Federal Republic of Germany,
\texttt{\{ahrens,unger\}@math.tu-berlin.de}. }
\footnotetext[2]{
The work of this author is supported by the Deutsche Forschungsgemeinschaft (DFG, 
German Research Foundation) - 384950143/ GRK2433}
\footnotetext[3]{
The work of this author is supported by the DFG Collaborative Research Center 910 \emph{Control of self-organizing nonlinear systems: Theoretical methods and concepts of application}, project number 163436311.}
%
\begin{abstract}
	We present a graph-theoretical approach that can detect which equations of a delay differential-algebraic equation (DDAE) need to be differentiated or shifted to construct a solution of the DDAE. Our approach exploits the observation that differentiation and shifting are very similar from a structural point of view, which allows us to generalize the Pantelides algorithm for differential-algebraic equations to the DDAE setting. The primary tool for the extension is the introduction of equivalence classes in the graph of the DDAE, which also allows us to derive a necessary and sufficient criterion for the termination of the new algorithm. 
\vskip .3truecm

{\bf Keywords:} delay differential-algebraic equation, structural analysis, Pantelides algorithm
\vskip .3truecm

{\bf AMS(MOS) subject classification:} 34A09, 34K32, 65L80
\end{abstract}

\section{Introduction}
\renewcommand*{\thefootnote}{\arabic{footnote}}
\setcounter{footnote}{0}

We study \glspl{DDAE} of the form
\begin{equation}
	\label{eq:nlDDAE}
	F(t,\state(t),\dot{\state}(t),\state(t-\tau)) = 0
\end{equation}
on the time interval $\timeInt$ with $\tau>0$. The \gls{DDAE} \eqref{eq:nlDDAE} is equipped with an (functional) initial condition of the form
\begin{equation*}
	\state(t) = \hist(t)\qquad\text{for}\ t\in[-\tau,0].
\end{equation*}
Differential equations with delay arise in various applications, where the current rate of change does not only depend on the current state but also on past information, for instance, population dynamics, infection disease models and laser dynamics. We refer to \cite{Ern09} and the references therein. Besides, if a dynamical system is controlled based on the current state of the system -- a so-called feedback control law -- then such a controller often requires some time to measure the current state and to compute the control action, which again results in a delay equation. On the other hand, modern modeling packages such as \textsc{modelica}\footnote{\url{https://www.modelica.org/}} or \textsc{Matlab/Simulink}\footnote{\url{https://www.mathworks.com/}} automatically generate dynamical systems with constraints that arise due to interface conditions or conservation laws. As a consequence, we have to deal with equations of the form \eqref{eq:nlDDAE}, which combine the features of \glspl{DDE} \cite{HalL93,BelZ03,Glu02,BelC63} and \glspl{DAE} \cite{KunM06,Pet82}. Further applications of \glspl{DDAE} are the analysis of hybrid numerical-experimental models arising in real-time dynamic substructuring \cite{Ung20}, blood flow models \cite{BorKT19}, analysis \cite{AltMU19} and construction \cite{BelGZ99,BelGZ00} of numerical time integration schemes, and the realization of transport phenomenon \cite{SchU16, SchUBG18, FosSU19}. Let us emphasize that our framework is not restricted to a single time delay, since multiple commensurate delays can be rewritten in the form \eqref{eq:nlDDAE} by introducing new variables \cite{Ha15}.

One of the main difficulties for \glspl{DDAE} is the fact that solutions may depend on derivatives of the function $F$ in \eqref{eq:nlDDAE} as well as on evaluations of $F$ at future time points \cite{HaM12,Cam95}, i.e., one has to study the interplay of the differentiation operator $\mathrm{d}/\mathrm{d}t$ and the (time) shift operator $\shift{\!}{\tau}$ \cite{HAMS14}, defined via $(\shift{x}{\tau})(t) = x(t+\tau)$. In particular, it is important to understand which equations in \eqref{eq:nlDDAE} need to be differentiated and which need to be shifted. For linear systems, this is investigated for instance in \cite{Cam95,HaM12,HAMS14,HaM16,Ung18,TreU19}. Note that already in the linear case, existence and uniqueness results for \glspl{DDAE} require a distributional solution concept \cite{TreU19}. To obtain continuous solutions one either has to impose restrictions on the \glspl{DDAE} \cite{HaM12,HaM16,Ung18} or on the history function \cite{Ung18,Ha18}. For nonlinear \glspl{DDAE}, solutions are establish for certain classed in \cite{AscP95,Ung20}. 

The main idea to establish solutions of \eqref{eq:nlDDAE}, as described in the literature above, is via integration on successive time intervals $[i\tau,(i+1)\tau)$, the so-called \emph{method of steps} \cite{BelZ03,HaM16,Cam80}. At each interval one thus has to solve the \gls{DAE} that is obtained by substituting the delayed variable in \eqref{eq:nlDDAE} with the already computed solution. A necessary condition for this procedure to succeed is that the \gls{DAE} is regular. Unfortunately, this is not necessarily satisfied even if the initial value problem for \eqref{eq:nlDDAE} is uniquely solvable \cite{HaM12}. Instead, we seek a reformulation of \eqref{eq:nlDDAE} such that the \gls{DAE} that has to be solved in each interval is regular and has small index. In the literature, such a reformulation is constructed either with a compress-and-shift algorithm \cite{Cam95,TreU19} or via a combined shift and derivative array \cite{HaM16}. In both cases one has to shift certain equations and differentiate certain equations.

In this paper, we propose a graph-theoretical approach to determine which equations need to be differentiated and which equations need to be shifted. To this end, we revisit the \defn{Pantelides} algorithm \cite{Pan88} for \glspl{DAE} in \cref{sec:prelim}, a methodology that uses the information which variable appears in which equation to determine which equations need to be differentiated. Our main contributions are the following:
\begin{enumerate}
	\item We introduce equivalence classes (cf.\ \Cref{def:graphDAE}) in the graph of an equation, which allows us to derive a criterion -- see \Cref{thm:TerminationPDAE} -- when the Pantelides algorithm terminates. We show that this criterion is equivalent to the criterion presented in \cite{Pan88} and thus provides another interpretation, that enables us to extend the algorithm to the \gls{DDAE} case.
	 \item In \cref{subsec:diffvsshift} we illustrate that, from a structural point of view, the operations differentiation and shifting are similar, such that we can essentially use the same procedure to determine the equations that need to be shifted or differentiated, respectively. 
    \item We introduce a shifting and differentiation graph in \Cref{def:shiftDiffGraph} and show that differentiation does not affect the shifting graph (\Cref{prop:PDDAEdiffDoesntEffectShift}). As a consequence, we follow the strategy in \cite{Cam95,TreU19}, and first determine the equations that need to be shifted. It turns out that during this procedure some of the equations need to be differentiated and we propose linear integer programs to determine which equations need to be differentiated how many times. \Cref{thm:linearIntegerProgram} details that each integer program is equivalent to a standard linear program that can be solved with standard methods. 
	\item Our methodology is summarized in \Cref{alg:PantDDAE}. We conclude this manuscript with a detailed analysis of \Cref{alg:PantDDAE} in \cref{sec:termination}: We show in \Cref{thm:TerminationPantDDAE3} that \Cref{alg:PantDDAE} terminates if and only if the \gls{DDAE} is structurally nonsingular with respect to a certain equivalence relation. Moreover \Cref{thm:noShift} details in which cases \Cref{alg:PantDDAE} concludes that no equations need to be shifted and that in this situation it produces the same result as if we apply the original Pantelides algorithm to the \gls{DDAE} \eqref{eq:nlDDAE} where we replace $\shift{x}{-\tau}$ with a function parameter $\lambda$. 
\end{enumerate}

We are convinced that our methodology can be combined with a dummy derivative approach \cite{MatS93} or with algebraic regularization techniques as proposed in \cite{SchS16a,SchS16b}, and thus serves as the first step to a numerical method for general nonlinear \glspl{DDAE}. We emphasize that -- similar to the original Pantelides algorithm \cite{Pan88} -- our method not always determines the correct number of differentiations and shifts. For \glspl{DAE} this fact is accounted for with a success check \cite{Pry01,PryNeTa15} and an algebraic analysis \cite{SchS16a}. A similar validation of the results for \glspl{DDAE} is ongoing research.

\paragraph*{Notation}
The natural numbers, the nonnegative integers, and the reals are denoted by $\mathbb{N}$, $\mathbb{N}_0$, and $\mathbb{R}$, respectively. For a set $X$ the power set of $X$ is denoted by $\mathcal{P}(X)$. For a differentiable function $f\colon\I\to\mathbb{R}^n$ we use the notation $\dot{f} \vcentcolon= \tfrac{\mathrm{d}}{\mathrm{d}t} f$ to denote the derivative from the right\footnote{For delay equations it is standard to use the derivative from the right \cite{Cam95,HalL93} since the history function may not be linked smoothly to the solution of the associated initial value problem.} with respect to the (time) variable $t$. Similarly, the shift operator $\shift{\!}{\tau}$ is defined as $(\shift{f}{\tau})(t) = f(t+\tau)$. As a consequence, the \gls{DDAE} \eqref{eq:nlDDAE} can be conveniently written as
\begin{displaymath}
	F(t,x,\dot{x},\shift{x}{-\tau}) = 0.
\end{displaymath}
The partial derivative of $F$ with respect to $x$, $\dot{x}$, and $\shift{x}{-\tau}$ is denoted by $\tfrac{\partial F}{\partial x}$, $\tfrac{\partial F}{\partial \dot{x}}$, and $\frac{\partial F}{\partial \shift{x}{-\tau}}$, respectively.  The union of two sets $A$ and $B$ is denoted by $A\cup B$. If, moreover, the intersection $A\cap B$ of $A$ and $B$ is empty, then we write $A\dot{\cup}B$. The number of elements that are contained in the set $A$ is denoted by $|A|$. For an equivalence relation $R \subseteq A\times A$ we denote the set of equivalence classes by $\eqclass{A}{R}$. For a subset $\widehat{A}\subseteq A$ we define $\eqclass{\widehat{A}}{R}\vcentcolon= \eqclass{\widehat{A}}{\widehat{R}}$ with
\begin{displaymath}
	\widehat{R} \vcentcolon= \{(x,y)\in R \mid x,y\in \widehat{A}\}.
\end{displaymath}

\section{Preliminary Results}
\label{sec:prelim}
A standard approach to solve differential equations with delay is the method of steps \cite{BelZ03}. In the context of \glspl{DDAE}, this requires to solve a sequence of initial value problems with a \gls{DAE}, see for instance \cite{Ung18}. More precisely, we study the \gls{DAE}
\begin{subequations}
	\label{eq:IVP}
\begin{align}
	\label{eq:nlDAE}
	F(t, x, \dot{x}) = 0,
\end{align}
where $F\colon\I \times \D_x \times \D_{\dot{x}} \to \R^n$, $\I \subseteq \R$ is an interval and $\D_x, \D_{\dot{x}} \subseteq \Rn$ are open, together with the initial condition
\begin{align}
	\label{eq:initialCondition}
	x(t_0) = x_0.
\end{align}
\end{subequations}
One of the main differences of \glspl{DAE} compared to \glspl{ODE} is that the partial derivative of $F$ with respect to $\dot{x}$, denoted via $\tfrac{\partial F}{\partial \dot{x}}$, may be singular. It is well-known that in general solutions of \eqref{eq:IVP} depend on derivatives of \eqref{eq:nlDAE}, see for instance \cite{BreCP96,Pet82,KunM06}. Following \cite{Cam87a}, we introduce the so-called \defn{derivative array} of level~$\ell$
\begin{displaymath}
	\mathcal{D}_\ell(t,x,\dot{x},\ldots,x^{(\ell+1)}) \defEqual \begin{bmatrix}
		F(t,x,\dot{x})\\
		\frac{\mathrm{d}}{\mathrm{d}t} F(t,x,\dot{x})\\
		\vdots\\
		\frac{\mathrm{d}^\ell}{\mathrm{d}t^\ell} F(t,x,\dot{x})
	\end{bmatrix},
\end{displaymath}
which can be used to determine the solution of \eqref{eq:IVP} for large enough $\ell$. The \defn{strangeness index} \cite{KunM98,KunM06} for instance uses $\mathcal{D}_\ell$ to determine matrix functions, which can be used to construct a reformulated version of \eqref{eq:nlDAE} that is suitable for numerical time integration. Although this procedure is tailored to numerical methods, it suffers from two issues: First, in a large-scale setting, the computation of the matrix functions may become computationally expensive, since all equations in \eqref{eq:nlDAE} are differentiated instead of only the required equations. Second, the computation of the matrix functions requires several numerical challenging rank decisions. 

To overcome these issues, \cite{SchS16a,SchS16b} propose to combine such algebraic regularization techniques with so-called structural analysis tools, such as the $\Sigma$-method \cite{Pry01, PryNeTa15} and the Pantelides algorithm \cite{Pan88}. Hereby, structure is understood as the information which variable appears in which equations. In other words, the structure defines a bipartite graph $G = (\VE \dot{\cup} \VV,E)$ where the set of vertices $ \VE \dot{\cup} \VV$ is given by the set of equations $\VE$ and the set of variables $\VV$. An edge $\{F,x\}\in E$ implies that the variable $x\in \VV$ appears in the equation $F\in \VE$ (see \Cref{def:graphDAE}). Note that we need to distinguish between a variable and the name of the variable in the definition of the bipartite graph. We, therefore, introduce the language   
\begin{align*}
	L \defEqual \left\{  \shift{\xi_{j}^{(i)}}{k \tau}  \mid \xi \in \{F,x\},\,j \in \N,\, i \in \N_0,\, k \in \{-1\} \cup \N_0 \right\},
\end{align*}
which accounts for differentiation and also for shifting, which is required for the upcoming analysis of the \gls{DDAE} \eqref{eq:nlDDAE}.
For notational convenience we write $\xi_j$ instead of $\xi_j^{(0)}$,  $\dot{\xi}_j$ instead of $\xi_j^{(1)}$ and $\xi_j^{(i)} $ instead of $\shift{\xi_j^{(i)}}{0 \tau}$. To map a vector of variable or equation names (as defined in $L$) to a set containing these elements, we define 
\begin{align*}
\set \colon L^N \rightarrow \mathcal{P}(L), \qquad (a_1,\dots,a_N) \mapsto  \{a_1, \dots, a_N \}.	
\end{align*}
For the remainder of this paper we identify a variable by its name and do not distinguish between both. 

The main idea of the Pantelides algorithm \cite{Pan88} is to answer the question, which equation determines which variable. More precisely, we are interested in matching each equation with a \defn{highest derivative}, i.e., a variable $x_i^{(k)}$ such that $x_i^{(k)}$ occurs in this equation but $x_i^{(k+\ell)}$ for $\ell >0$ does not occur in any equation. For instance, in an \gls{ODE} of the form $\dot{x} = f(t,x)$ we can simply assign the $i$th equation to $\dot{x}_i$. The matching of an equation to a highest derivative is possible only if we can match each equation to a different variable regardless of the order of differentiation. Thus, it is desirable to group variables of different differentiation levels, which motivates the following definition.
\begin{definition}
	\label{def:graphDAE}
	Consider the equation
	\begin{align}
		\label{eq:generalEquation}
		F(t,\allvars) = 0
	\end{align}
	with $F\colon \mathbb{I}\times \mathbb{D}_\allvars \to \R^M$ and $\mathbb{D}_\allvars\subseteq\R^N$.
	\begin{enumerate}
		\item The set
			\begin{displaymath}
				\setvars \defEqual \left\{\elementsetvars\in\set(\allvars) \,\left|\,  \elementsetvars \text{ appears in } F \right.\right\}
			\end{displaymath}
			is called the \defn{set of variables} of the equation \eqref{eq:generalEquation}.
		\item Let $\setvars$ denote the set of variables of \eqref{eq:generalEquation} and $\widetilde{\setvars}\subseteq\setvars$. The \defn{graph} of \eqref{eq:generalEquation} over the equivalence relation $R \subseteq \widetilde{\setvars}\times\widetilde{\setvars}$ is defined to be the bipartite graph $G = (\VE \dot{\cup} \VV, E)$ where $\VE \defEqual \set(F )$ is the \defn{set of equations}, $ \VV \defEqual \eqclass{\widetilde{\setvars}}{R}$ denotes the \defn{set of variables with respect to $R$} and 
			\begin{align}
				\label{eq:edgesGraphEquation}
				E\defEqual  & \left\{\{V, w\} \,\left|\,  V \in \VV, w \in \VE :\text{there exists } v \in V \text{ which appears in } w \right.\right\}.
			\end{align}
		\item Let $\setvars$ denote the set of variables for the \gls{DAE} \eqref{eq:nlDAE} (with $\allvars^T = \begin{bmatrix} x^T & \dot{x}^T\end{bmatrix}$) and $R \defEqual \{(\elementsetvars,\elementsetvars) \mid \elementsetvars\in \setvars\} \subseteq \setvars \times \setvars$. The bipartite graph $G = (\VE\dot{\cup}\VV, E) $ is called \defn{graph of the \gls{DAE}} \eqref{eq:nlDAE} if $G$ is the graph of \eqref{eq:nlDAE} over $R$. 
	\end{enumerate}
\end{definition}

\begin{remark}
	Note that if a variable appears in an equation, that equation may not depend on that variable. 
	For example, in the expression $\sin^2 (x_1) + \cos^2 (x_1)$ the variable $x_1$ appears but the expression does not depend on $x_1$ \cite{TanNP17}. In general, it is not possible to determine if an expression truly depends on a variable \cite{Ri68}. 
\end{remark}

Each equivalence class in the set of variables with respect to $R$ for a graph of a \gls{DAE} contains one variable. Thus for \glspl{DAE} we can identify the set of variables and the set of variables with respect to $R$ with each other. In general, this is not possible. Nevertheless, we do not distinguish between a set of variables and a set of variables with respect to $R$ since it is implied by the context.
	
\begin{example}
	\label{ex:DAEgraphExample}
	In the \gls{DAE} 
	\begin{equation}
		\label{eq:Ex1}
		\begin{aligned}
			0 &= x_1 + f_1,\qquad &
			\dot{x}_1 & = x_2 + f_2,\qquad &
			\dot{x}_2 & = x_3 + f_3 
		\end{aligned}		
	\end{equation}
	we denote the $i$th equation by $F_i$ for $i=1,2,3$. 
	The graph $G = (\VE \dot{\cup} \VV, E)$ of the \gls{DAE} \eqref{eq:Ex1} given by 
	\begin{align*}
		\VE &\defEqual \left\{ F_1, F_2, F_3 \right\}\subseteq \mathcal{P}(L),\\
		\VV &\defEqual \left\{ \{x_1\}, \{x_2\}, \{x_3\}, \{\dot{x}_1\}, \{\dot{x}_3\} \right\}\subseteq\mathcal{P}(L),\\
		E &\defEqual \left\{ \left\{F_1,\{x_1\} \right\},\left\{F_2,\{x_2\} \right\}, \left\{F_2,\{\dot{x}_1\}\right\}, \left\{F_3,\{x_3\} \right\}, \left\{F_3,\{\dot{x}_2\} \right\}  \right\}
	\end{align*}		
	is visualized in \Cref{fig:DAEEx1G1} (where we represent the equivalence classes by listing all its elements). By standard abuse of notation we do not distinguish between a graph and its visualization.
\end{example}

\begin{figure}[t]
	\begin{subfigure}[t]{0.45\textwidth}
		\centering
		\begin{tikzpicture} 
		\enode{1}{$F_1$}
		\enode{2}{$F_2$}
		\enode{3}{$F_3$}
		\vnode{1}{$x_1$}{}
		\vnode{2}{$x_2$}{}
		\vnode{3}{$x_3$}{}
		\vnode{4}{$\dot{x}_1$}{}
		\vnode{5}{$\dot{x}_2$}{}
		\edge{e1}{v1}
		\edge{e2}{v2}
		\edge{e2}{v4}		
		\edge{e3}{v5}	
		\edge{e3}{v3}	
		\end{tikzpicture}	
		\caption{Graph of the \gls{DAE}}
		\label{fig:DAEEx1G1}
	\end{subfigure}
	\qquad
	\begin{subfigure}[t]{0.45\textwidth}
		\centering
		\begin{tikzpicture} 
		\enode[try]{1}{$F_1$}
		\enode{2}{$F_2$}
		\enode{3}{$F_3$}
		\vnode[low]{1}{$x_1$}{}
		\vnode[low]{2}{$x_2$}{}
		\vnode{3}{$x_3$}{}
		\vnode{4}{$\dot{x}_1$}{}
		\vnode{5}{$\dot{x}_2$}{}
		\edge[low]{e1}{v1}
		\edge[low]{e2}{v2}
		\edge[assign]{e2}{v4}		
		\edge[assign]{e3}{v5}	
		\edge{e3}{v3}	
		\end{tikzpicture}	
		\caption{Assignment after deleting $x_1$ and $x_2$}
		\label{fig:DAEEx1G1-assign}
	\end{subfigure}\\
	\begin{subfigure}[t]{0.45\textwidth}
		\centering
		\begin{tikzpicture} 
		\enode[try]{1}{$\dot{F}_1$}
		\enode[try]{2}{$F_2$}
		\enode{3}{$F_3$}
		\vnode[low]{1}{$x_1$}{}
		\vnode[low]{2}{$x_2$}{}
		\vnode{3}{$x_3$}{}
		\vnode{4}{$\dot{x}_1$}{}
		\vnode{5}{$\dot{x}_2$}{}
		\edge[low]{e1}{v1}
		\edge[assign]{e1}{v4}
		\edge[low]{e2}{v2}
		\edge{e2}{v4}		
		\edge[assign]{e3}{v5}	
		\edge{e3}{v3}	
		\end{tikzpicture}	
		\caption{Graph for \eqref{eq:Ex1} after differentiating $F_1$}
		\label{fig:DAEEx1G2}	
	\end{subfigure}
	\qquad
	\begin{subfigure}[t]{0.45\textwidth}
	\centering
	\begin{tikzpicture} 
	\enode{1}{$\ddot{F}_1$}
	\enode{2}{$\dot{F}_2$}
	\enode{3}{$F_3$}
	\vnode[low]{1}{$x_1$}{}
	\vnode[low]{2}{$x_2$}{}
	\vnode{3}{$x_3$}{}
	\vnode{4}{$\dot{x}_1$}{}
	\vnode{5}{$\dot{x}_2$}{}
	\vnode{6}{$\ddot{x}_1$}{}
	\edge[low]{e1}{v1}
	\edge[low]{e1}{v4}
	\edge[assign]{e1}{v6}
	\edge[low]{e2}{v2}
	\edge[low]{e2}{v4}
	\edge[assign]{e2}{v5}
	\edge{e2}{v6}			
	\edge{e3}{v5}	
	\edge[assign]{e3}{v3}		
	\end{tikzpicture}	
	\caption{Graph for \eqref{eq:Ex1} after differentiating $F_1$ twice and $F_2$ once}
	\label{fig:DAEEx1G3}	
\end{subfigure}
	\caption{Visualization of \Cref{ex:DAEgraphExample}}
\end{figure}

In the language of graph theory, the assignment of an equation to a variable can be expressed as a \defn{matching}  $\mathcal{M} \subseteq E$ (in the literature also called \defn{assignment}) in a graph $(V,E)$, which is a subset of edges, such that for all vertices $v \in V$ with  $v \in e_1 \in \mathcal{M}$ and $v \in e_2 \in \mathcal{M}$ we have $e_1 = e_2$, i.e., no vertex appears in more than one edge. An edge $e = \{v_1, v_2\} \in \mathcal{M}$ is called a \defn{matching edge} and we say that $v_1$ and $v_2$ are \defn{matched}. A vertex which does not occur in any matching edge is called an \defn{exposed vertex} with respect to the matching. In a bipartite graph $G = (\VE \dot{\cup} \VV, E)$ a \defn{maximal matching} is a matching such that no $F \in \VE$ is exposed.
	
\begin{example}
	\label{ex:DAEgraphExample-step2}
	In the graph in \Cref{ex:DAEgraphExample} the variables $x_1, x_2$ are not highest derivatives such that we can (temporarily) delete these variables from the graph, which we visualize by using gray edges in \Cref{fig:DAEEx1G1-assign}. A matching is for instance given by $\mathcal{M} = \{\{F_2,\{\dot{x}_1\}\},\{F_3,\{\dot{x}_2\}\}\}$, which we depict in \Cref{fig:DAEEx1G1-assign}	via thick blue line.
	We immediately notice that $F_1$ is an exposed vertex (independent of any possible matching and indicated by a black vertex) and thus no maximal matching is possible.
\end{example}

\begin{remark}
	\label{rem:relationHighestDerivative}
	Let $\setvars$ denote the set of variables for the \gls{DAE} \eqref{eq:nlDAE} (cf.\ \Cref{def:graphDAE}). Define
	\begin{align}
		\label{eq:highestDerivative}
		\widetilde{\setvars} \defEqual \{\elementsetvars\in \setvars \mid \elementsetvars \text{\ is highest derivative}\}
	\end{align}
	and $\widetilde{R}_{\mathrm{equal}} \defEqual \{(\elementsetvars,\elementsetvars) \mid \elementsetvars\in\widetilde{\setvars}\}$. Then the graph of the \gls{DAE} that is obtained after deleting all variables that are not highest derivative is the graph of \eqref{eq:nlDAE} over $\widetilde{R}_{\mathrm{equal}}$.
\end{remark}

Suppose that, after deleting variables that are no highest derivatives, we cannot find a maximal matching and thus have an exposed vertex $F_i$. Then one of the following applies: The function $F_i$ does not depend on any variable, implying that in general, the DAE is not regular (see \cite{KunM98} for further details). Otherwise, the function $F_i$ determines a variable that is not a highest derivative, and thus $F_i$ needs to be differentiated to construct a solution. More precisely, we replace $F_i$ by $\frac{\mathrm{d}}{\mathrm{d}t} F_i(t,\allvars) = 0$ and add possible new variables. Note that all variables that appear in $F_i$ appear with one additional derivative in $\frac{\mathrm{d}}{\mathrm{d}t} F_i(t,\allvars) = 0$ and thus the original variables cannot be highest derivatives and can thus be ignored in the following.

\begin{example}
	\label{ex:DAEgraphExample-step3}
	Continuing with \Cref{ex:DAEgraphExample-step2}, we replace $F_1$ with the differentiated equation $\frac{\mathrm{d}}{\mathrm{d}t} F_1(t,\allvars) = 0$, which we denote by $\dot{F}_1$. Note that there is a (gray) edge in \Cref{fig:DAEEx1G2} between $\dot{F}_1$ and $x_1$, since in our structural approach we do not work with the actual equation but only with the information that  $F_1$ depends on $x_1$. The chain rule then implies that $\dot{F}_1$ depends on $x_1$ and $\dot{x}_1$. A possible matching is given by $\mathcal{M} = \{\{\dot{F}_1,\{\dot{x}_1\}\},\{F_3,\{\dot{x}_2\}\}\}$, such that $F_2$ is an exposed vertex with respect to $\mathcal{M}$. The resulting graph is depicted in \Cref{fig:DAEEx1G2}. Continuing with our strategy for \Cref{ex:DAEgraphExample-step3} implies that we need to differentiate equation $F_2$. This time, the situation is slightly different than before, since there exists a coupling between $\dot{F}_1$ and $F_2$ via the variable $\dot{x}_1$ indicated by a black vertex, i.e., we have a path from $\dot{F}_1$ to $F_2$ via $\dot{x}_1$.
\end{example}

A \defn{path} is a sequence of edges $(e_1, \dots, e_n)$ in a graph $G= (V,E)$ such that $e_i = \{v_{i-1} ,v_i\} \in E$, where $v_0, \dots, v_n \in V$ are distinct. An \defn{alternating path} with respect to a matching is a path with alternating non-matching and matching edges that starts with a non-matching edge. An \defn{augmenting path} with respect to a matching is an alternating path that ends with a non-matching edge. It is easy to see that whenever an augmenting path exists, we can find another matching that includes more equations than the previous matching and hence we cannot determine (at this point) that the equations need to be differentiated. Hence we only have to differentiate whenever we cannot find an augmenting path. In this case, we have to differentiate all equations that are connected with the exposed equation via an alternating path. For further details, we refer to \cite{Pan88}.

\begin{example}
	In \Cref{ex:DAEgraphExample-step3} there exists no augmenting path that starts in $F_2$ (cf. \Cref{fig:DAEEx1G2}). Thus, we have to differentiate $F_2$ and all equations $F_j$ with an alternating path from $F_2$ to $F_j$. In \Cref{fig:DAEEx1G2} these equations are denoted by a black dot. In this case the path $(F_2,\{\dot{x}_1\},\dot{F}_1)$ implies that we have to differentiate $\dot{F}_1$ and $F_2$. Note that in the resulting graph, given in \Cref{fig:DAEEx1G3}, we find the maximal matching
	\begin{displaymath}
		\mathcal{M} = \{\{\ddot{F}_1,\{\ddot{x}_1\}\}, \{\dot{F}_2,\{\dot{x}_2\}\},\{F_3,\{x_3\}\}\}
	\end{displaymath}
	and can thus stop.
\end{example}

Before we present an algorithmic summary of the outline methodology, let us recall the main idea of the Pantelides algorithm, namely to match each equation in \eqref{eq:nlDAE} with a highest derivative.
In more detail, we have considered the highest derivative of each variable (for the corresponding equivalence relation see \Cref{rem:relationHighestDerivative}) to determine which equations need to be differentiated. Let $\{k_1,\ldots,k_m\} \subseteq\{1,\ldots,M\}$ denote the equations that need to be differentiated. Then we define a \defn{subset of equations} of \eqref{eq:generalEquation} via
\begin{align}
	\label{eq:subsetEq}
	\widehat{F}(t, \widehat{\allvars}) \defEqual \begin{bmatrix}
		F_{k_1}(t,\allvars)\\
		\vdots\\
		F_{k_M}(t,\allvars)
	\end{bmatrix} = 0,
\end{align}
with the understanding that $\widehat{\allvars}$ contains only the subset of variables of $\allvars$ that appears in any equation of $\widehat{F}$. It is clear that we cannot match each equation in \eqref{eq:subsetEq} with a highest derivative if there are more equations than highest derivative variables, i.e., equivalence classes in $\eqclass{\left( \set (\hat{\allvars}) \cap \tilde{\setvars} \right) }{R_{\mathrm{equal}}}$ with $\tilde{\setvars}$ defined as in \eqref{eq:highestDerivative}.

\begin{definition}
	\label{def:structuralSingular}
	A \defn{subset} \eqref{eq:subsetEq} of \eqref{eq:generalEquation} is called \defn{structurally  singular} with respect to an equivalence class $\eqclass{\widehat{\setvars}}{\widehat{R}}$ with $\widehat{\setvars} \subseteq \set(\widehat{\allvars}) $ and relation $\widehat{R}$ if
	\begin{align*}
	|\set(\widehat{F}) | > |\eqclass{\widehat{\setvars}}{\widehat{R}} |. 
	\end{align*} 
	The \defn{equation} \eqref{eq:generalEquation} is called \defn{structurally singular} with respect to an equivalence class $\eqclass{\setvars}{R}$ with $\setvars \subseteq \set(\allvars)$ if it contains a structurally singular subset \eqref{eq:subsetEq} with respect to $\eqclass{(\setvars \cap \set(\widehat{\allvars}))}{R}$\footnote{Recall that we use $\eqclass{(\setvars \cap \set(\widehat{\allvars}))}{R}$ to denote the set of equivalence classes $\eqclass{(\setvars \cap \set(\widehat{\allvars}))}{\widehat{R}}$ with the restricted equivalence relation $\widehat{R} \vcentcolon= \{(x,y)\in R \mid x,y\in \setvars\cap \set(\widehat{\allvars})\}$ }.
	A structurally singular subset \eqref{eq:subsetEq} is called \gls{MSS} with respect to an equivalence class $\eqclass{\setvars}{R}$ if none of its proper subsets \eqref{eq:subsetEq} is structurally singular with respect to $\eqclass{(\setvars \cap \set(\widehat{\allvars}))}{R}$.
\end{definition}

\begin{remark}
 	The notion of structural singularity of a subset of equations is defined in \cite{Pan88}. In contrast to \cite{Pan88} we restrict ourselves to the set of variables that are actually contained within this subset of equations.
\end{remark}

\begin{prop}
	\label{lem:AugmentpathMSS}
	Consider the equation \eqref{eq:generalEquation}, let $\setvars$ denote the set of variables of \eqref{eq:generalEquation}. For a subset $\tilde{\setvars}\subseteq\setvars$ with equivalence relation $R\subseteq\tilde{\setvars}\times\tilde{\setvars}$ consider the graph $G$ of \eqref{eq:generalEquation} over $\eqclass{\tilde{\setvars}}{R}$ with a matching $\mathcal{M}$. Let $F_j$ be an exposed node. Then one of the following is true:
	\begin{enumerate}
		\item There exists an augmenting path starting in $F_j$.
		\item The set of equations $\hF(t, \widehat{\allvars}) = 0 $ associated with 
			\begin{equation}
				\label{eq:MSS}
				C_{F_j} \defEqual \left\{F_k \in \VE \mid \text{there exists an alternating path between } F_j \text { and } F_k \text{ in } G \right\}
			\end{equation}
			is \gls{MSS}  with respect to $\eqclass{(\tilde{\setvars} \cap \set(\hat{\allvars}) )}{R}$.
	\end{enumerate}
\end{prop}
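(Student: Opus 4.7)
The plan is to prove the dichotomy by assuming the first alternative fails and deriving the second. So suppose no augmenting path starts in $F_j$. I would first show that the set of equations associated with $C_{F_j}$ is structurally singular, and then that it is minimal with this property.

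For structural singularity, I would analyse the set $T$ of variable vertices of $G$ that are reachable from $F_j$ by an alternating path. Any alternating path from an equation to an equation has even length, and its last edge lies in $\mathcal{M}$; hence every $F\in C_{F_j}\setminus\{F_j\}$ is matched to some $V\in T$. Conversely, the assumption rules out augmenting paths, so every $V\in T$ is matched, and extending the alternating path reaching $V$ by its matching edge (or noting that the matched partner is already on the path) shows that this partner lies in $C_{F_j}$. The matching thus sets up a bijection between $C_{F_j}\setminus\{F_j\}$ and $T$, giving $|C_{F_j}|=|T|+1$. Finally, the equivalence classes of variables actually appearing in equations of $C_{F_j}$ coincide with $T$: if $V'$ is adjacent to some $F\in C_{F_j}$, then either $V'$ is the matching partner of $F$, or the non-matching edge $\{F,V'\}$ can be appended to an alternating path reaching $F$, placing $V'$ in $T$. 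Combining these facts yields $|C_{F_j}|>|T|$, i.e., structural singularity with respect to $\eqclass{(\tilde{\setvars}\cap\set(\widehat{\allvars}))}{R}$.

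For minimality, take a proper subset $S\subsetneq C_{F_j}$. If $F_j\notin S$, every $F\in S$ is matched (by the parity argument) to a distinct variable appearing in $F$, so the variable count is at least $|S|$ and $S$ is not structurally singular. If $F_j\in S$, set $S':=S\setminus\{F_j\}$; the matching already contributes $|S'|$ distinct variables appearing in $S$. For $S$ to be structurally singular, the variable count must be exactly $|S|-1$, which forces every variable appearing in $S$ to be matched to an equation in $S'$. In particular, every variable neighbour of $F_j$ must be matched to an equation in $S'$. Choosing $F^*\in C_{F_j}\setminus S$ and a shortest alternating path $(F_j,V_1,F_1,\ldots,V_k,F^*)$, an induction on $i$ then shows $F_i\in S'$: the variable $V_i$ appears in $F_{i-1}\in S'$, hence must be matched inside $S'$, namely to $F_i$. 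Eventually $F^*=F_k\in S'$, contradicting $F^*\notin S$.

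The main obstacle I anticipate is the path-versus-walk subtlety: the definition of a path requires distinct vertices, which complicates the closure arguments used both to show that matching partners of vertices in $T$ lie in $C_{F_j}$ and to identify $T$ with the variables appearing in $C_{F_j}$. The standard workaround is that if a naive extension of an alternating path would revisit a vertex, then that vertex has already been reached and the desired conclusion still holds. Working with a shortest alternating path in the minimality step likewise ensures no repetitions arise during the induction.
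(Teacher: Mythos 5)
Your proposal is correct and takes essentially the same route as the paper: structural singularity is obtained by counting the equations of $C_{F_j}$ against the variable classes reachable by alternating paths (only $F_j$ being exposed, and the absence of augmenting paths forcing every such class to be matched inside $C_{F_j}$), and minimality is handled by the same two-case split on whether $F_j$ belongs to the proper subset. Your explicit bijection with the reachable set $T$ and the forcing induction along a path to an equation outside the subset simply spell out rigorously what the paper's proof states tersely, including the path-versus-walk caveat.
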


\begin{proof}
	Since all equations in $C_{F_j}$ have an alternating path to $F_j$ we conclude that $F_j$ is the only exposed equation in $C_{F_j}$. This implies 
	\begin{equation}
		\label{eq:proofAugmentingPathMSS}
		|C_{F_j}|-1 \leq |\eqclass{(\tilde{\setvars}\cap\set(\widehat{\allvars}))}{R}|
	\end{equation}		
	Suppose that there is no augmenting path that starts in $F_j$. Assume that the inequality in \eqref{eq:proofAugmentingPathMSS} is strict. Then  there exists at least one equivalence class in $\eqclass{(\tilde{\setvars}\cap\set(\widehat{\allvars}))}{R}$ that is exposed. By definition of $\eqclass{(\tilde{\setvars}\cap\set(\widehat{\allvars}))}{R}$ and $C_{F_j}$ this implies that there exists an augmenting path that starts in $F_j$, a contradiction. We conclude that we have equality in \eqref{eq:proofAugmentingPathMSS} and thus $C_{F_j}$ is structurally singular.
	
	Consider a proper subset $\tilde{C}_{F_j} \subsetneq C_{F_j}$. 
	If $F_j \not\in \tilde{C}_{F_j}$ then all $F_k \in \tilde{C}_{F_j}$ are contained in a matching edge. Thus the set is structurally nonsingular. If $F_j \in \tilde{C}_{F_j}$ there exists an edge between a variable $x_k \in \eqclass{(\tilde{\setvars} \cap \set(\widehat{\allvars}))}{R}$ and an equation $F_k \in \tilde{C}_{F_j}$ which is not a matching edge. Thus $\tilde{C}_{F_j}$ is structurally nonsingular. This completes the proof.
\end{proof}

\Cref{lem:AugmentpathMSS} allows us to implement the outlined strategy as follows. Suppose that we have already found a matching for the first $j-1$ equations. For the $j$th equation, we try to find an augmenting path. If there exists an augmenting path, then we can directly generate a matching that includes the first $j$ equations. If not, then we differentiate all equations within the set $C_{F_j}$ defined in \eqref{eq:MSS}. Note that, to determine that there is no augmenting path, we have to check all alternating paths and hence the set $C_{F_j}$ is automatically generated when we check for an augmenting path. One way to achieve this is via a recursive, depth-first search algorithm \cite{Duf81,We00}. The details are given in \Cref{alg:Augmentpath}. Here, we use the notation $\cox_{\coi}, \Cof_{\coj}$ on the one hand as vertices in $\VV$ and $\VE$ and on the other hand as indices referring to those vertices. The complete algorithm, which is known as the Pantelides algorithm \cite{Pan88}, is presented in \Cref{alg:PantDAE}. 

Note that in contrast to the original algorithm \cite{Pan88}, we replace equations that are differentiated and do not add the equations to the graph. All variables occurring in the replaced equation are deleted. Thus the replaced equation has no edges and can be removed without any changes for the algorithm. This has the advantage that we do not need to keep track, which equation was obtained by differentiating another equation, and similarly for the variables. We emphasize that we only replace equations in our algorithm. If we want to use the results for numerical time-integration methods, then we can either construct a reformulation of the DAE that explicitly contains all algebraic constraints from a reduced derivative array \cite{Ste06}, where only derivatives of equations are added, which are determined by the Pantelides algorithm, or we solve the overdetermined system that results from adding all equations that are differentiated to the original system (as for instance in the dummy derivative approach \cite{MatS93} or the least-squares approach \cite{Ste06}). Note that we use a similar notation in \Cref{alg:PantDAE} as in \Cref{alg:Augmentpath} for vertices in $\VV$ and $\VE$. The vertex $\cox_{\coi,\cok}$ represents the variable $x_i^{(k)}$.

\begin{algorithm}[htb]
	\caption{Augmentpath}
	\label{alg:Augmentpath}	
	\begin{algorithmic}[1]
		\Require
		\begin{description}
			\item bipartite graph $\Cog = (\Cov_\Coe \dot{\cup} \Cov_\Cov, \Coe)$ with $\Cov_\Coe = \{\Cof_\code{1}, \dots,\Cof_\code{m}\}$ and $\Cov_\Cov = \{\cox_\code{1},\dots,\cox_\con\}$ 
			\item $\Cof_\coj \in \Cov_\Coe$ \Comment{starting vertex}
			\item $\code{assign}$ \Comment{matching}
			\item $\code{colorV}$, $\code{colorE} $ \Comment{vertices needed to be differentiated if \code{pathfound == false}}
		\end{description}
		\Ensure  \code{pathfound}, \code{assign}, \code{colorV}, \code{colorE}
		\Statex
		\State $\code{pathfound} \gets \code{false}$
		\State $\code{colorE}(\Cof_\coj)\leftarrow 1$ 
		\For {$\cox_\coi \in \Cov_\Cov$ with $\{\cox_\coi,\Cof_\coj\} \in \Coe$ and $\code{colorV}(\cox_\coi)== 0$}  \label{aug3}
		\State $\code{colorV}(\cox_\coi)\leftarrow 1$ \label{aug4}
		\State $\Cof_\cok \gets \code{assign}(\cox_\coi )$ \label{aug5}
		\If{$\Cof_\cok  == 0  $}
		\State  \code{pathfound} $\leftarrow$ \code{true}
		\Else 
		\State (\code{pathfound}, \code{assign}, \code{colorV}, \code{colorE}) $\leftarrow$ \code{Algorithm1}(\Cog, $\Cof_\cok $, \code{assign}, \code{colorV}, \code{colorE}) \label{aug10}  
		\EndIf
		\If{ \code{pathfound} $==$ \code{true}  }
		\State $\code{assign}(\cox_\coi) \leftarrow \Cof_\coj$
		\State \textbf{return} \code{pathfound}, \code{assign}, \code{colorV}, \code{colorE} 
		\EndIf  
		\EndFor 
	\end{algorithmic}
\end{algorithm}

\begin{algorithm}[htb]
	\caption{Pantelides Algorithm for \glspl{DAE}}
	\label{alg:PantDAE}	
	\begin{algorithmic}[1]
		\Require graph $\Cog = (\Cov_\Coe \dot{\cup} \Cov_\Cov, \Coe)$, $\Cov_\Coe =
			\left\{ \Cof_{\coj, \ell} \mid  \Cof_\coj^{(\ell)} \in \Cov_\Coe \right\}$, $\Cov_\Cov = 
			\left\{ \cox_{\coi, \cok} \mid  \cox_\coi^{(\cok)} \in \Cov_\Cov \right\}$		
		\Ensure graph $\Cog= (\Cov_\Coe \dot{\cup} \Cov_\Cov, \Coe)$
			\Statex
			\State $\code{assign} ( \cox_{\coi, \cok} ) \leftarrow \code{0} $ for each $\cox_{\coi, \cok} \in \Cov_\Cov $
			\For{$\cor=1,\dots,\code{M} $} 
				\State $(\cop,\coq) \leftarrow (\cor,0) $
				\Repeat
					\State\label{alg:PantDAE-lineRemoveVariables} $\tilde{\Coe} \gets \Coe \setminus \{ \{\cox_{\coi, \cok},\Cof_{\coj,\ell}\} \in \Coe \mid \cox_{\coi,\cok+ \code{s}} \in \Cov_\Cov \text{ for some } \code{s}>0  \} $
					\State $\tilde{\Cov}_\Cov \gets \Cov_\Cov \setminus   \{\cox_{\coi, \cok} \mid \cox_{\coi,\cok+ \code{s}} \in \Cov_\Cov \text{ for some } \code{s}>0  \} $
					\State $\code{colorV}(\cox_{\coi, \cok} ) \leftarrow 0 $ for each $\cox_{\coi, \cok} \in \tilde{\Cov}_\Cov $
					\State $\code{colorE}(\Cof_{\coj, \ell} ) \leftarrow 0$ for each $ \Cof_{\coj, \ell} \in \Cov_\Coe$ 
					\State (\code{pathfound}, \code{assign}, \code{colorV}, \code{colorE}) $\leftarrow$ \text{Algorithm1}($(\Cov_\Coe \dot{\cup} \tilde{\Cov}_\Cov,\tilde{\Coe})$, $\elementVE_{\cop,\coq}$, 
					\Statex \hspace{4em}\code{assign}, \code{colorV}, \code{colorE}) \Comment{apply Augmentpath} 
					\If{\code{pathfound} == \code{false}  } 	
						\State $\Cov_\Cov\gets \Cov_\Cov \cup \{\cox_{\coi, \cok+1} \mid \code{colorV}(\cox_{\coi, \cok})== 1 \}$					
						\ForEach{$(\coj,\ell)$ s.\,t.\ $\code{colorE}(\Cof_{\coj, \ell}) == 1$}
							\State $\Cof_{\coj,\ell} \gets \Cof_{\coj, \ell+1}$
							\State $\Coe \gets \Coe \cup \left\{\{\cox_{\coi, \cok+1},\Cof_{\coj, \ell}\} \mid \{\cox_{\coi, \cok},\Cof_{\coj, \ell}\}\in \Coe \right\}$
						\EndFor																
						\ForEach{$\coi$ with $\code{colorV}(\cox_{\coi, \cok} ) == 1$ and $\Cof_{\coj, \ell} == \code{assign}(\cox_{\coi, \cok} )$}
							\State  $\code{assign}(\cox_{\coi, \cok+1} ) \gets \Cof_{\coj,\ell}$
						\EndFor
					\EndIf  
				\Until{\code{pathfound} == \code{true}  }
			\EndFor	
	\end{algorithmic}
\end{algorithm}

Since \Cref{alg:PantDAE} is an iterative algorithm we have to answer the question whether \Cref{alg:PantDAE} terminates. We immediately observe that if \Cref{alg:PantDAE} terminates, then we have no exposed equation and thus each equation is matched with a highest derivative. We conclude that a necessary condition for termination of \Cref{alg:PantDAE} is that the \gls{DAE} \eqref{eq:nlDAE} is structurally nonsingular with respect to $\setvars/R_{\mathrm{equal}}$ with $\setvars$ denoting the set of variables of \eqref{eq:nlDAE} and $R_{\mathrm{equal}}$ the equivalence relation
\begin{align}
	\label{eq:eqrelEquiv}
	R_{\mathrm{equal}} \defEqual \left\{ (\elementsetvars, \tilde{\elementsetvars}) \in \setvars \times \setvars \,\left|\, \exists \xi \in \set(x),\, p,q \in \N_0 \text{ such that } \elementsetvars = \xi^{(p)},\, \tilde{\elementsetvars} = \xi^{(q)} \right.\right\}.
\end{align}
The following result shows that in fact the structural nonsingularity is also a sufficient condition.

\begin{theorem}
	\label{thm:TerminationPDAE}
	Consider the \gls{DAE} \eqref{eq:nlDAE} and let $\setvars$ denote its set of variables and $R_{\mathrm{equal}}$ the equivalence relation in \eqref{eq:eqrelEquiv}. Then \Cref{alg:PantDAE} applied to the \gls{DAE} \eqref{eq:nlDAE} terminates if and only if \eqref{eq:nlDAE} is structurally nonsingular with respect to $\eqclass{\setvars}{R_{\mathrm{equal}}}$.
\end{theorem}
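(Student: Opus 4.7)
The plan is to prove each implication separately, with the forward direction being a brief consequence of the matching condition and the reverse direction constituting the main work.

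For the necessity direction ($\Rightarrow$), I proceed by contrapositive. Suppose $\widehat{F}(t,\widehat{\allvars})=0$ is a structurally singular subset of \eqref{eq:nlDAE} with respect to $\eqclass{(\setvars\cap\set(\widehat{\allvars}))}{R_{\mathrm{equal}}}$. The key observation is that for any equation $F_k$, the differentiated equation $\tfrac{\mathrm{d}}{\mathrm{d}t}F_k$ introduces only variables of the form $x_i^{(\ell+1)}$ whenever $x_i^{(\ell)}$ already appears in $F_k$, and these lie in the same $R_{\mathrm{equal}}$-equivalence class as $x_i^{(\ell)}$. Consequently, at any stage of \Cref{alg:PantDAE}, the current versions of the equations of $\widehat{F}$ depend on a set of variables whose number of $R_{\mathrm{equal}}$-equivalence classes is unchanged and, by hypothesis, strictly less than $|\widehat{F}|$. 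Since the matching attempt in \Cref{alg:Augmentpath} is restricted to highest derivatives (which, by \Cref{rem:relationHighestDerivative}, form singleton classes under $\widetilde{R}_{\mathrm{equal}}$), Hall's condition fails for these equations in every intermediate graph, so no augmenting path can be found for all of them and the algorithm cannot terminate.

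For the sufficiency direction ($\Leftarrow$), I argue by contradiction. Suppose \eqref{eq:nlDAE} is structurally nonsingular with respect to $\eqclass{\setvars}{R_{\mathrm{equal}}}$, yet \Cref{alg:PantDAE} does not terminate. Let $r$ be the smallest outer-loop index for which the inner repeat-until loop runs forever. At each inner iteration $s$, \Cref{lem:AugmentpathMSS} applied to the exposed vertex (the current version of $F_r$) produces a set $C^{(s)}$ which is \gls{MSS} in the current graph and whose elements are precisely the equations that get differentiated. Projecting every equation in $C^{(s)}$ onto its underlying index in $\{1,\dots,M\}$ yields a subset $I^{(s)}\subseteq\{1,\dots,M\}$. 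Since $\{1,\dots,M\}$ has only finitely many subsets, the pigeonhole principle furnishes an infinite subsequence $(s_\ell)$ and a fixed index set $I$ with $I^{(s_\ell)}=I$ for all $\ell$.

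I then claim that $\{F_i:i\in I\}$ is structurally singular in \eqref{eq:nlDAE} with respect to $R_{\mathrm{equal}}$, which is the sought contradiction. Fixing any $\ell$, the \gls{MSS} property of $C^{(s_\ell)}$ in the current graph gives $|C^{(s_\ell)}|>|\eqclass{(\widetilde{\setvars}\cap\set(\widehat{\allvars}))}{\widetilde{R}_{\mathrm{equal}}}|$, where $\widetilde{\setvars}$ is restricted to highest derivatives; since $|C^{(s_\ell)}|=|I|$, the highest-derivative neighborhood of these equations has at most $|I|-1$ equivalence classes. Lifting to the full set of variables occurring in the corresponding original equations $\{F_i:i\in I\}$ does not increase the $R_{\mathrm{equal}}$-class count, because $R_{\mathrm{equal}}$ identifies all differentiation levels $\xi^{(p)}$ of a single underlying variable $\xi$, and every variable occurrence in the differentiated equations traces back, by the chain-rule observation from the first paragraph, to a variable occurring in the corresponding original equation. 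Hence $\{F_i:i\in I\}$ has at most $|I|-1$ equivalence classes of variables under $R_{\mathrm{equal}}$, contradicting structural nonsingularity.

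The main obstacle is the last paragraph: the \gls{MSS} property is stated in the current graph with the restricted equivalence relation $\widetilde{R}_{\mathrm{equal}}$ (where only highest derivatives are grouped and each such class is a singleton), whereas structural nonsingularity of the theorem is stated for $R_{\mathrm{equal}}$ (where \emph{all} derivatives of a given variable are grouped). One must therefore carefully track how differentiation of equations (which happens many times between the original DAE and the state at iteration $s_\ell$) interacts with the two equivalence relations, and show that no "new" equivalence classes are gained by passing from the current differentiated subset back to the original equations in $I$. The chain-rule observation makes this transparent but still needs to be executed bookkeeping-by-bookkeeping over the full history of differentiations performed before step $s_\ell$.
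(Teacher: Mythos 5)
Your necessity direction is fine (and is essentially the paper's one-line observation: termination forces a matching of every equation to a distinct highest derivative, and differentiation never creates new $R_{\mathrm{equal}}$-classes). The sufficiency direction, however, has a genuine gap at exactly the step you flag as "the main obstacle": the lifting inequality goes the wrong way. The \gls{MSS} property of $C^{(s_\ell)}$ from \Cref{lem:AugmentpathMSS} bounds only the number of \emph{highest-derivative} classes adjacent to the current versions of the equations, after all lower-order occurrences have been deleted. A base variable $\xi$ may appear in these equations only at a non-highest level, its highest derivative being owned by equations outside $C^{(s_\ell)}$; such a $\xi$ contributes an $R_{\mathrm{equal}}$-class to the original subset $\{F_i : i\in I\}$ but is invisible in the reduced graph. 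Hence from $|I|-1 \geq$ (adjacent highest-derivative classes) you cannot conclude $|I|-1 \geq$ (number of $R_{\mathrm{equal}}$-classes of variables in the originals). Concretely, take $0 = x_1 + x_2 + f_1$, $0 = x_1 + 2x_2 + f_2$, $\dot{x}_2 = x_3 + f_3$: the first failure of \Cref{alg:Augmentpath} identifies $C = \{F_1,F_2\}$ as \gls{MSS} with the single adjacent highest derivative $x_1$, yet $\{F_1,F_2\}$ involves the two classes $[x_1],[x_2]$ and is structurally nonsingular with respect to $\eqclass{\setvars}{R_{\mathrm{equal}}}$ (and indeed the algorithm terminates after one differentiation). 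So your claim, applied as stated at a single iteration $s_\ell$, is false; the pigeonhole recurrence of $I$ is used only to fix the index set and does not repair this. To salvage the argument you would have to exploit the infinite recurrence in an essential way, e.g.\ pass to the set of equations differentiated infinitely often and argue that beyond some iteration the highest derivative of every base variable occurring in their current versions is attained inside that set -- precisely the bookkeeping you defer, and the part that actually carries the proof.

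For comparison, the paper avoids this difficulty entirely: it does not re-prove termination from scratch but reduces to the termination criterion of the original Pantelides paper by forming the extended \gls{DAE} (splitting $x$ into differentiated and purely algebraic parts and adding the formal couplings $G_i(y_i,\dot{y}_i)=0$), and then shows by a subset-counting argument that structural nonsingularity of \eqref{eq:nlDAE} with respect to $\eqclass{\setvars}{R_{\mathrm{equal}}}$ is equivalent to structural nonsingularity of the extended system with respect to the trivial relation. A correct direct proof along your lines would be a genuinely different (and self-contained) route, but as written the key inference is not valid.
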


\begin{proof}
	We show that the structural nonsingularity is equivalent to the termination criterion that is presented in the original paper \cite{Pan88}. To this end let us rewrite \eqref{eq:nlDAE} by splitting the variable $x$ into $\set(x) = \set(y) \dot{\cup} \set(z)$ such that 	\begin{itemize}
		\item for all $\xi \in \set(y)$ it holds that $\dot{\xi}$ appears in $F$ and 
		\item for all $\xi \in \set(z)$ it holds that $\dot{\xi}$ does not appears in $F$. 
	\end{itemize}
	Using the new variables we can rewrite \eqref{eq:nlDAE} as
	\begin{subequations}
		\label{eq:extendedDAE}
		\begin{align}
			\label{eq:DAElong}
			F(t,y,\dot{y},z) = 0. 
		\end{align}
		We introduce a coupling between $y$ and $\dot{y}$ by adding formal equations
		\begin{align}
			\label{eq:formalCoupling}
			G_i(y_i, \dot{y}_i) & = 0 \qquad\text{ for } i=1,\dots, n_y
		\end{align}	
	\end{subequations}
	for $y = [y_i]_{i = 1, \dots, n_y}$ and $G = [G_i]_{i=1,\dots, n_y}$. The system of equations \eqref{eq:extendedDAE} is called the  \defn{extended \gls{DAE}}. 
	In \cite{Pan88} it is shown that \Cref{alg:PantDAE} applied to the \gls{DAE} \eqref{eq:nlDAE} terminates if and only if the extended \gls{DAE} \eqref{eq:extendedDAE} is structurally nonsingular with respect to $\eqclass{\set(y,\dot{y},z)}{R_\mathrm{triv}}$ with $R_\mathrm{triv} \defEqual  \{(\xi, \xi) \mid \xi \in \set(y, \dot{y},z) \}$. It therefore suffices to show that structural nonsingularity of \eqref{eq:DAElong} with respect to $\eqclass{\set(y, \dot{y},z)}{R_\mathrm{equal}}$ is  equivalent to structural nonsingularity of the extended \gls{DAE} \eqref{eq:extendedDAE} with respect to $\eqclass{\set(y,\dot{y},z)}{R_\mathrm{triv}}$.
	
	Assume first that the extended \gls{DAE}  \eqref{eq:extendedDAE} is structurally nonsingular with respect to $\eqclass{\set(y,\dot{y},z)}{R_\mathrm{triv}}$. Consider an arbitrary subset of \eqref{eq:DAElong} 
	\begin{align}
		\label{eq:subsetDAElong}
		\hF(t, \ya, \yb, \ybd, \ycd, \widehat{z}) &= 0,
	\end{align}
	with $\set(\ya,\yb,\yc) \subseteq \set (y)$ and $\set(\widehat{z}) \subseteq \set(z)$. Let $\widehat{G}(\yb,\ybd) = 0$ denote the subset of \eqref{eq:formalCoupling} that corresponds to the variable $\yb$. Since the extended \gls{DAE} is structurally nonsingular, we obtain
	\begin{align*}
		\left| \set \left(  \widehat{F},\widehat{G} \right) \right| \le \left| \eqclass{\set \left( \ya, \yb, \ybd, \ycd,\widehat{z} \right)}{R_\mathrm{triv}} \right|.
	\end{align*}
	This is equivalent to 
	\begin{align*}
		\left| \set \left(  \widehat{F} \right) \right| \le \left|\set \left( \ya, \yb, \ycd,\widehat{z} \right) \right| = \left| \eqclass{\set(\ya, \yb, \ybd, \ycd, \widehat{z}) }{R_{\mathrm{equal}}}\right|
	\end{align*}	
	and thus \eqref{eq:DAElong} is structurally nonsingular with respect to $\eqclass{\setvars}{R_{\mathrm{equal}}}$.
	
	Conversely suppose that the \gls{DAE} \eqref{eq:DAElong} is structurally nonsingular with respect to $\eqclass{\set(y,\dot{y},z)}{R_\mathrm{equal}}$. A subset of \eqref{eq:extendedDAE} is given by
	\begin{equation}
		\label{eq:subsetExtendedDAE}
		\begin{aligned}
			\hF(t,\ya,\yaG,\yb,\ybd,\ybG,\ybdG,\ycd,\ycdG,\hz) &= 0,\\
			\hG(\yaG,\yadG,\ybG,\ybdG,\ycG,\ycdG,\ydG,\yddG) &= 0.
		\end{aligned}
	\end{equation}
	with $\set(\ya,\yaG,\yb,\ybG,\yc,\ycG,\ydG) \subseteq \set(y)$ and $\set(\hz)\subseteq \set(z)$.
	Since the \gls{DAE} \eqref{eq:DAElong} is structurally nonsingular with respect to $\eqclass{\set(y, \dot{y},z)}{R_\mathrm{equal}}$ we obtain 
	\begin{align*}
		\left| \set \left( \hF \right)\right| 
		\le \left| \eqclass{\set \left(  \ya, \yaG, \yb, \ybd, \ybG, \ybdG, \ycd, \ycdG, \hz  \right) }{R_{\mathrm{equal} }}  \right| 
		= \left| \set \left( \ya, \yaG, \yb, \ybG, \ycd, \ycdG, \hz \right)\right| 
	\end{align*}
	which immediately implies  
	\begin{align*}
		\left| \set\left( \hF, \hG\right)\right|
		\leq \left| \set \left( \ya, \yaG, \yadG, \yb, \ybG, \ybdG, \ycd, \ycG, \ycdG, \ydG, \yddG, \hz \right)\right| .
	\end{align*}
	We conclude that \eqref{eq:subsetExtendedDAE} is structrually nonsingular with respect to $R_{\mathrm{triv}}$, which completes the proof.
\end{proof}

Even if \Cref{alg:PantDAE} terminates this is not a guarantee that the correct number of differentiations for each equation is determined. In fact, the Pantelides algorithm relies heavily on the so-called sparsity pattern of the \gls{DAE}, i.e., each equation depends only on few variables. In particular, \Cref{alg:PantDAE} is not invariant under bijective transformations of the \gls{DAE} (cf.\ \cite[Remark~5]{SchS16a}). 

\begin{example}
	\label{ex:failurePantelidesDAE}
	The Pantelides algorithm applied to the \gls{DAE}
	\begin{equation*}
		\label{eq:PantelidesFail1}
		\dot{x}_1 + \dot{x}_2 = x_2 + f_1,\qquad 0 = x_1 + x_2 + f_2
	\end{equation*}
	determines that the second equation needs to be differentiated one time. If we however add the first equation to the second equation, then we can match the first equation with $\dot{x}_1$ and the second equation with $\dot{x}_2$ and thus neither of the equations is differentiated.
\end{example}

Also, it may be the case that the Pantelides algorithm concludes that equations need to be differentiated several times, although this is not necessary \cite{ReiMB00}. One possible reason for this is that the \gls{DAE} is close to a high-index \gls{DAE} and thus the numerical solution may be more difficult to obtain than the (potentially) small index suggests (see \cite{ReiMB00} for further details). In any case, it is possible to check that the equations are differentiated sufficiently often by applying a success check \cite{Pry01,PryNeTa15} and an algebraic criterion \cite{SchS16a}. Similar arguments apply to the relation between solvability and structural singularity.

\begin{example}
		\label{ex:regularVsStructuralSingularity}
		Consider the DAE
		\begin{displaymath}
			\begin{bmatrix}
				1 & a\\
				a & a
			\end{bmatrix}\begin{bmatrix}
				\dot{x}_1\\
				\dot{x}_2
			\end{bmatrix} = \begin{bmatrix}
				1 & a\\
				a & a
			\end{bmatrix}\begin{bmatrix}
				x_1\\
				x_2
			\end{bmatrix} + \begin{bmatrix}
				f_1\\f_2
			\end{bmatrix}.
		\end{displaymath}
		For $a=1$ the DAE is structurally nonsingular. However, solutions can only exist if $f_1-f_2\equiv 0$. On the other hand, if $a=0$, then the DAE is structurally singular. Still, the DAE possesses a solution if $f_2\equiv 0$. Note that small perturbations of the nonzero matrix coefficients and the inhomogeneities $f_1$ and $f_2$ do not affect the structural singularity but may change the solvability of the DAE.
\end{example}

\section{The Pantelides Algorithm for DDAEs}

\label{sec:pantelidesDDAE}
It is well-known that for \glspl{DDAE} of the form \eqref{eq:nlDDAE} it is not sufficient to differentiate equations to obtain solutions, but in addition, some equations need to be shifted \cite{Cam95,HaM12,HaM16,TreU19}. 
 
\begin{example}
	\label{ex:shiftAndDifferentiate}
	Consider the \gls{DDAE}
	\begin{equation}
	\label{eq:Ex2}
		\dot{x}_1 =  f_1,\qquad 0 = x_1 - \shift{x_2}{-\tau} + f_2. 
	\end{equation}
	If we want to construct a solution for the corresponding initial value problem with the method of steps \cite{BelZ03}, then in the interval $[0,\tau)$ we have to solve the DAE
	\begin{equation}
		\label{eq:Ex2:2}
		\dot{x}_1 = f_1, \quad 0 = x_1 + \tilde{f}_2
	\end{equation}
	with $\tilde{f}_2 = f_2 + \varphi_2(t-\tau)$. Clearly, this DAE is not regular (it does not depend on $x_2$ and the two equations might contradict each other). If we however add the first equation to the differentiated second equation and shift the resulting equation, we obtain the DDAE
	\begin{displaymath}
		\dot{x}_1 = f_1, \qquad \dot{x}_2 = \shift{f_2}{\tau} + \shift{\dot{f}_1}{\tau}
	\end{displaymath}
	and it is easy to see that for any consistent initial value the DAE that we have to solve in the interval $[0,\tau)$ is uniquely solvable.
\end{example}

As a direct consequence, a structural analysis for \glspl{DDAE} should reveal which equations need to be shifted and which equations need to be differentiated. We thus have to understand the similarities and differences between differentiating and shifting from a structural point of view.

\subsection{Differentiation vs.\ Shifting}
\label{subsec:diffvsshift}
To understand the effect of the two operators $\frac{\mathrm{d}}{\mathrm{d}t}$ and $\shift{\,}{-\tau}$ from a structural point of view, we start with the following simple example.

\begin{example}
	\label{ex:LTISystem}
	Let $N\in\mathbb{R}^{n\times n}$ be a nilpotent matrix with $N^\nu = 0$ and $N^{\nu-1}\neq 0$ for some $\nu\in\mathbb{N}$ and consider the two equations
	\begin{subequations}
		\begin{align}
			\label{eq:LTIDAE} N\frac{\mathrm{d}}{\mathrm{d}t}x &= x + f,\\
			\label{eq:LTIDE} Ny &= \shift{y}{-\tau} + g,
		\end{align}
		which can be understood as special cases of the Weierstraß canonical form \cite{Gan59b} for a regular matrix pencil. It is easy to see that the solutions are of the form
		\begin{equation}
			\label{eq:solutionNilpotentLTI}
			x = -\sum_{i=0}^{\nu-1} N^i \left(\frac{\mathrm{d}}{\mathrm{d}t}\right)^i f\qquad\text{and}\qquad
			y = -\sum_{i=0}^{\nu-1} N^i \left(\shift{\,}{\tau}\right)^i \shift{g}{\tau}.
		\end{equation}
	\end{subequations}
	Apart from the operator applied to the inhomogeneity both solutions are the same. Thus the variable $\dot{x}$ has in the \gls{DAE} the same role as the variable $y$ in the difference equation \eqref{eq:LTIDE}. In other words, in the \gls{DAE}, the variable with the highest derivative is $\dot{x}$, while in the delay equation, the variable $y = \shift{y}{0}$ is the variable with the highest shift, in the sense that $0 > -1$. 
\end{example}

Motivated by \Cref{ex:LTISystem} we can use the Pantelides algorithm (\Cref{alg:PantDAE}) for difference equations by replacing differentiation by shifting in \Cref{alg:PantDAE}. 
In a difference equation we want to assign each equation to a different variable with \defn{highest shift}, meaning that $\shift{x}{k\tau}$ with $k\geq 0$ occurs in some equation but $\shift{x}{(k+\ell) \tau}$ for $\ell>0$ does not. Note that by our definition the variable $\shift{x}{-\tau}$ can never be a variable with \defn{highest shift}, since this would imply that we can solve directly for this variable without shifting. This can  also be seen in the solution formula \eqref{eq:solutionNilpotentLTI} for the difference equation, which requires an additional shift of the inhomogeneity $g$. 

\begin{example}
	\label{ex:LTI1}
	The graph of the scalar difference equation
	\begin{equation}
		\label{eq:exDDE}
		0 =\shift{x_1}{-\tau} + f_1 
	\end{equation}		
	 is visualized in \Cref{fig:structual_viewpoint_shift_dif1}.
		\begin{figure}[htb]
			\hspace{1ex}
			\begin{subfigure}[t]{0.48\textwidth}
				\centering
				\begin{tikzpicture}
					\enode[try]{1}{$F_1$}
					\vnode[low]{1}{$\shift{x_1}{-\tau}$}{}
					\edge[low]{e1}{v1}
				\end{tikzpicture}	
				\caption{Initial graph for \eqref{eq:exDDE}}
				\label{fig:structual_viewpoint_shift_dif1}	
			\end{subfigure}
			\qquad
			\begin{subfigure}[t]{0.48\textwidth}
				\centering
				\begin{tikzpicture}
					\enode{1}{$\shift{F_1}{\tau}$}
					\vnode{1}{$x_1$}{}
					\vnode[low]{2}{$\shift{x_1}{-\tau}$}{}
					\edge{e1}{v1}
				\end{tikzpicture}
				\caption{Graph for \eqref{eq:exDDE} after shifting $F_1$}	
				\label{fig:structual_viewpoint_shift_dif2}	
			\end{subfigure}
			\caption{Graphs obtained by applying \Cref{alg:PantDAE} to \Cref{ex:LTI1}}
		\end{figure}
 	By replacing differentiating with shifting in \Cref{alg:PantDAE}, we shift $F_1$ once (cf.\ \Cref{fig:structual_viewpoint_shift_dif2}). Note that there is no edge between $\shift{F_1}{\tau}$ and $\shift{x_1}{-\tau}$, since there is no chain rule for the shift operator.
\end{example}

\subsection{The Shifting and Differentiation Graph}
\label{subsec:ShiftAndDifGraph}
As outlined in \Cref{ex:shiftAndDifferentiate}, some equations in the \gls{DDAE} \eqref{eq:nlDDAE} need to be shifted and some need to be differentiated. Motivated by the observation (cf.\ \cref{subsec:diffvsshift}) that shifting and differentiating are quite similar from a structural point of view, we construct two graphs: One to determine which equations need to be shifted and one to determine which equations need to be differentiated. In the shifting graph, we do not want to match equations with delayed variables, since we cannot (directly) solve for these variables. More precisely, we do not want to match equations with variables that are not highest shifts and thus delete these variables in the shifting graph. On the other hand, we need to make sure that we do not match one equation with a variable and another equation with the derivative of that variable. This is illustrated in the following example.

\begin{example}
	\label{ex:shiftingGraph}
	Consider again the \gls{DDAE} \eqref{eq:Ex2}.  Deleting the variable $\shift{x_2}{\tau}$ results in the graph \Cref{fig:DDAEEx1G1} with maximal matching $M = \{ \{F_1, \{x_1\}\}, \{F_2, \{\dot{x}_1\}\}\}$. Note that the maximal matching is possible, since we match both equations with $x_1$ respectively its derivative $\dot{x}_1$. 
\begin{figure}[htb]
	\centering
	\begin{subfigure}[b]{0.47\textwidth}
		\centering 
		\begin{tikzpicture}
		\enode{1}{$F_1$}
		\enode{2}{$F_2$}
		\vnode{1}{$x_1$}{}
		\vnode{2}{$\dot{x}_1$}{}
		\vnode[low]{3}{$\shift{x_2}{-\tau}$}{}
		\edge[assign]{e1}{v2}
		\edge[assign]{e2}{v1}
		\edge[low]{e2}{v3}
		\end{tikzpicture}
		\caption{without grouping}	
		\label{fig:DDAEEx1G1}			
	\end{subfigure}
	\hfill
	\begin{subfigure}[b]{0.47\textwidth}
		\centering
		\begin{tikzpicture}
		\enode[try]{1}{$F_1$}
		\enode[try]{2}{$F_2$}
		\vnode{1}{$x_1, \dot{x}_1$}{}
		\vnode[low]{2}{$\shift{x_2}{-\tau}$}{}
		\edge[assign]{e1}{v1}
		\edge{e2}{v1}
		\edge[low]{e2}{v2}	
		\end{tikzpicture}
		\caption{with grouping}	
		\label{fig:DDAEEx1G2}
	\end{subfigure}
	\caption{Different shifting graphs for \Cref{ex:shiftingGraph}}
\end{figure}
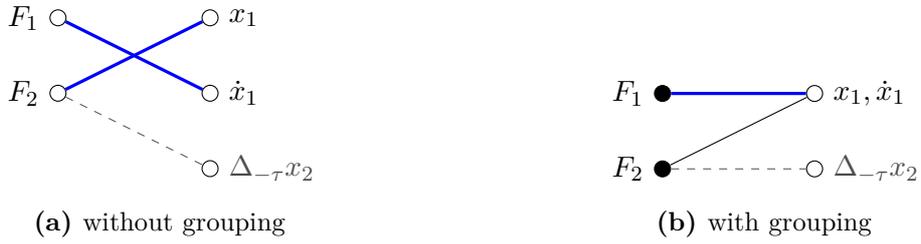
If we instead merge the variables $x_1$ and $\dot{x}_1$ into a single vertex we obtain the graph in \Cref{fig:DDAEEx1G2}, which indicates that we have to shift some of the equations (in agreement with the observation in \Cref{ex:shiftAndDifferentiate}. 
\end{example}

To ensure that we do not match different equations with the same variable (with different differentiation levels), we need to group the corresponding variables as in \Cref{ex:shiftingGraph}. To merge variables of different differentiation levels we use the equivalence classes introduced in the graph for a \gls{DAE} in \Cref{def:graphDAE}. Instead of imposing the trivial relation, we introduce a new relation. To formalize this, consider the equation
\begin{equation}
	\label{eq:DDAEshiftDiffVars}
	F(t,\shift{x}{-\tau},\shift{\dot{x}}{-\tau},\ldots,\shift{x^{(\rho)}}{-\tau},\ldots,\shift{x}{\kappa\tau},\shift{\dot{x}}{\kappa\tau},\ldots,\shift{x^{(\rho)}}{\kappa\tau}) = 0.
\end{equation}
which may be understood as a generalization of \eqref{eq:nlDDAE} that appears after shifting and/or differentiation of some of the equations.

\begin{definition}
	\label{def:shiftDiffGraph}
	Consider the \gls{DDAE} \eqref{eq:DDAEshiftDiffVars} and let $\setvars$ denote the set of variables for \eqref{eq:DDAEshiftDiffVars} (with $\allvars^T = \begin{bmatrix}
		{\shift{x}{-\tau}}^T & \dots & {\shift{x^{(\rho)}}{\kappa\tau}}^T
	\end{bmatrix}$). 
	\begin{itemize}
		\item The variables $\elementsetvars,\widetilde{\elementsetvars}\in\setvars$ are called \defn{shifting similar} if there exists $\xi\in\set(x)$ and integers $k\in\mathbb{Z}$, $p,q\in\mathbb{N}_0$ such that $\elementsetvars = \shift{\xi^{(p)}}{k\tau}$ and $\widetilde{\elementsetvars} = \shift{\xi^{(q)}}{k\tau}$. Shifting similarity introduces an equivalence relation on $\setvars$, which we denote by $\Rs$. The graph of \eqref{eq:DDAEshiftDiffVars} over $\Rs$ is called \defn{shifting graph}.
		\item The variables $\elementsetvars,\widetilde{\elementsetvars}\in\setvars$ are called \defn{differential similar} if there exists $\xi\in\set(x)$ and integers $p,k,\ell\in\mathbb{N}_0$ such that $\elementsetvars = \shift{\xi^{(p)}}{k\tau}$ and $\widetilde{\elementsetvars} = \shift{\xi^{(p)}}{\ell\tau}$. Differential similarity introduces an equivalence relation on $\widetilde{\setvars}$ with
			\begin{displaymath}
			\label{eq:varExcludingNegTau}
				\widetilde{\setvars} = \{\elementsetvars \in \setvars \mid \text{there exists } x\in\setvars, k,p\in\mathbb{N}_0 \text{ such that } \elementsetvars = \shift{x^{(p)}}{k\tau}\},
			\end{displaymath}
			which we denote by $\Rd$. The graph of \eqref{eq:DDAEshiftDiffVars} over $\Rd$ is called \defn{differentiation graph}.
		\item The graph of \eqref{eq:DDAEshiftDiffVars} over the trivial equivalence relation $R \defEqual \{(\theta, \theta) \mid \theta \in \Theta \}$ is called \defn{graph of the \gls{DDAE}}. 
	\end{itemize}
\end{definition}

\begin{example}
	The shifting graph for the \gls{DDAE} \eqref{eq:Ex2} is given in \Cref{fig:DDAEEx1G2}. For the differentiation graph we have
\begin{displaymath}
\widetilde{\setvars} = \left\{\{x_1\},\{\dot{x}_1\} \right\},
\end{displaymath}
such that the differentiation graph is given in \Cref{fig:DDAEEx1diffGraph}. Note that $\{x_2\}\not\in\widetilde{\setvars}$.
\begin{figure}[htb]
	\centering
	\begin{tikzpicture}
	\enode{1}{$F_1$}
	\enode[try]{2}{$F_2$}
	\vnode[low]{1}{$x_1$}{}
	\vnode{2}{$\dot{x}_1$}{}
	\edge[assign]{e1}{v2}
	\edge[low]{e2}{v1}
	\end{tikzpicture}
	\caption{Differentiation graph for the \gls{DDAE} \eqref{eq:Ex2}}	
	\label{fig:DDAEEx1diffGraph}
\end{figure}
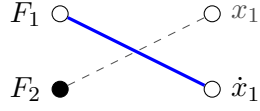
\end{example}
 
A question that arises is how differentiation of equations affects the shifting graph and how shifting affects the differentiation graph (cf.\ \cite{HAMS14} for an example where the order of shifting and differentiation results in different smoothness requirements). We immediately observe that the differentiation graph is affected by shifting, see the following example.

\begin{example}
	The differentiation graph for the scalar \gls{DDAE}
	\begin{equation}
		\label{eq:shiftDDAE}
		0 = \shift{x_1}{-\tau} + f_1
	\end{equation}
	does not contain any variable vertices and thus in particular no maximal matching. If we shift \eqref{eq:shiftDDAE} we obtain
	\begin{displaymath}
		0 = x_1 + \shift{f_1}{\tau},
	\end{displaymath}
	such that the differentiation graph is given by $V = \{F_1,\{x_1\}\}$ and $E = \{\{F_1,\{x_1\}\}\}$.
\end{example}

To understand the effect of differentiation on the shifting graph, let us define the system of equations
\begin{equation}
	\label{eq:DDAEdifferentiated}
	\replaceSys{F}(t,\shift{x}{-\tau},\ldots,\shift{x^{(\rho)}}{-\tau},\shift{x^{(\rho+1)}}{-\tau},\ldots,\shift{x}{\kappa\tau},\shift{\dot{x}}{\kappa\tau},\ldots,\shift{x^{(\rho+1)}}{\kappa\tau}) = 0.
\end{equation}
by replacing the $i$th equation ($i\in\{1,\ldots,M\}$) in \eqref{eq:DDAEshiftDiffVars} with its total derivative with respect to $t$. The shifting graphs of \eqref{eq:DDAEshiftDiffVars} and \eqref{eq:DDAEdifferentiated} are denoted by $G^\mathrm{s} \defEqual (\VEs \dot{\cup} \VVs, E^\mathrm{s})$ and $\replaceSys{G}^\mathrm{s} \defEqual (\replaceSys{V}_\mathrm{E}^\mathrm{s} \dot{\cup} \replaceSys{V}_\mathrm{V}^\mathrm{s}, \replaceSys{E}^\mathrm{s})$, respectively. 
Let us define the bijection $\phi\colon \VEs \dot{\cup} \VVs \to \replaceSys{V}_\mathrm{E}^\mathrm{s} \dot{\cup} \replaceSys{V}_\mathrm{V}^\mathrm{s}$ via
\begin{displaymath}
	\phi(v) = \begin{cases}
		v \cup \{\dot{w} \mid w \in v \}, & \text{for } v \in \VVs, \\
		v, & \text{for } v \in \VEs \text{ and } v \neq F_i, \\
		\dot{v}, & \text{for } v\in \VEs \text{ and } v = F_i.
	\end{cases}
\end{displaymath}
Then it is easy to see, that $\{u,v\}\in E^{\mathrm{s}}$ if and only if $\{\phi(u),\phi(v)\}\in\replaceSys{E}^{\mathrm{s}}$, that is the graphs $G^{\mathrm{s}}$ and $\replaceSys{G}^{\mathrm{s}}$ are isomorphic \cite[p.~21]{Jun13}. In particular, we have shown the following result.

\begin{prop}
	\label{prop:PDDAEdiffDoesntEffectShift}
	Consider the \gls{DDAE} \eqref{eq:DDAEshiftDiffVars}. Differentiation of an equation in \eqref{eq:DDAEshiftDiffVars} does not affect the shifting graph of \eqref{eq:DDAEshiftDiffVars}.
\end{prop}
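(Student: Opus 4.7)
The plan is to exhibit an explicit graph isomorphism between the shifting graph $G^{\mathrm{s}}$ of \eqref{eq:DDAEshiftDiffVars} and the shifting graph $\replaceSys{G}^{\mathrm{s}}$ of \eqref{eq:DDAEdifferentiated}, thereby confirming that differentiation of the $i$th equation preserves the shifting graph up to relabeling of vertices. Since the authors have essentially specified the candidate bijection $\phi$, my task reduces to (i) checking $\phi$ is well-defined as a map of vertex sets and (ii) verifying that $\{u,v\}\in E^{\mathrm{s}}$ if and only if $\{\phi(u),\phi(v)\}\in \replaceSys{E}^{\mathrm{s}}$.

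First I would analyze what differentiation does to the raw set of variables. By the chain rule, if $F_i$ depends on variables $w_1,\dots,w_s$, then $\dot{F}_i$ depends on $w_1,\dots,w_s$ together with $\dot{w}_1,\dots,\dot{w}_s$. Each $w_j$ has the form $\shift{\xi^{(p)}}{k\tau}$, and the corresponding $\dot{w}_j = \shift{\xi^{(p+1)}}{k\tau}$ lies in the same $\Rs$-equivalence class as $w_j$, since $\Rs$ groups by $\xi$ and shift level $k$, not by differentiation order. Hence the equivalence classes appearing in $\replaceSys{G}^{\mathrm{s}}$ are exactly those appearing in $G^{\mathrm{s}}$, each possibly enlarged by one additional representative $\dot{w}_j$ of the same class. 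This makes $\phi$ well-defined on $\VVs$, a bijection of equation vertices on $\VEs$ (swapping $F_i$ with $\dot{F}_i$ and leaving the rest untouched), and in particular a bijection between $\VEs\dot\cup\VVs$ and $\replaceSys{V}_\mathrm{E}^\mathrm{s}\dot\cup\replaceSys{V}_\mathrm{V}^\mathrm{s}$.

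The edge-preservation step splits naturally into two cases. For $j\neq i$, equation $F_j$ is unchanged, so $F_j$ is incident to the class $v\in\VVs$ in $G^{\mathrm{s}}$ precisely when some $w\in v$ appears in $F_j$, which in turn holds precisely when $F_j=\phi(F_j)$ is incident to $\phi(v)$ in $\replaceSys{G}^{\mathrm{s}}$. For $j=i$, one direction follows because every variable appearing in $F_i$ still appears in $\dot{F}_i$, so an edge $\{F_i,v\}$ is inherited as $\{\dot{F}_i,\phi(v)\}$; the converse uses that any variable newly appearing in $\dot{F}_i$ is of the form $\dot{w}$ with $w$ appearing in $F_i$, and $\dot{w}$ lies in the same $\Rs$-class as $w$, so no genuinely new incidence is produced, i.e., every edge of $\replaceSys{G}^{\mathrm{s}}$ at $\dot{F}_i$ is the $\phi$-image of an edge of $G^{\mathrm{s}}$ at $F_i$.

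The main obstacle, if any, is the subtle fact that $\phi$ must be seen to act consistently on \emph{equivalence classes} rather than on individual variables; one has to be careful that the newly-created variable $\shift{\xi^{(p+1)}}{k\tau}$ does not spawn a previously absent class or merge two previously distinct classes. Both dangers are ruled out by the definition of $\Rs$, which only identifies variables sharing the same base symbol $\xi$ and the same shift $k\tau$, neither of which is altered by taking a $t$-derivative. Once this is spelled out, combining the two cases above yields the bijection of edge sets, hence a graph isomorphism $G^{\mathrm{s}}\cong \replaceSys{G}^{\mathrm{s}}$, which is exactly the claim of \Cref{prop:PDDAEdiffDoesntEffectShift}.
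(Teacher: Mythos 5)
Your proposal is correct and follows essentially the same route as the paper: the paper's argument consists precisely of defining the differentiated system \eqref{eq:DDAEdifferentiated}, introducing the bijection $\phi$, and asserting that $\{u,v\}\in E^{\mathrm{s}}$ if and only if $\{\phi(u),\phi(v)\}\in\replaceSys{E}^{\mathrm{s}}$. You simply spell out the details the paper declares ``easy to see'' -- in particular that the chain rule only adds variables lying in already-present $\Rs$-classes, so no class is created, destroyed, or merged -- which is a faithful elaboration rather than a different proof.
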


Since shifting affects the differentiation graph but differentiation does not affect the shifting graph, we conclude that we first determine which equations need to be shifted and afterwards determine which equations should be differentiated.

\begin{remark}
	\label{rem:shiftingRequiresDifferentiation}
	The strategy to shift first can be understood as replacing the \gls{DDAE} \eqref{eq:nlDDAE} with a transformed \gls{DDAE} that can be solved with the method of steps. For linear time-invariant \glspl{DDAE} such a strategy was investigated in \cite{Cam95,TreU19} in a polynomial matrix framework. There the authors use a sequence of row compressions to determine which equations need to be shifted. It should be noted that the row-compression requires differentiation of some of the equations to enable the shift afterward. Fore more details we refer to \cite{Cam95,TreU19} and the upcoming subsection.
\end{remark}

\subsection{Differentiating during the Shifting Step}
\label{subsec:DiffInShift}
If we find (for instance via \Cref{alg:PantDAE}) an exposed equation in the shifting graph, we conclude that this equation and all equations that are connected with this equation by an alternating path need to be shifted. However, simply shifting all these equations may not be sufficient, since the connection may exist only implicitly via the equivalence class. We illustrate this with the following example.

\begin{example}
	The shifting graph of \Cref{ex:shiftingGraph}, i.e., the shifting graph for the \gls{DDAE} \eqref{eq:Ex2}, is given in \Cref{fig:DDAEEx1G2}. If we match the first equation $F_1$ with the equivalence class $\{x_1,\dot{x}_1\}$ then $F_2$ is an exposed equation that is connected with $F_1$ via $\{x_1,\dot{x}_1\}$. Accordingly, we have to shift both equations. If we check the graph of \eqref{eq:Ex2} (presented in \Cref{fig:DDAEEx1G1}), then we observe that there is no path from $F_1$ to $F_2$, i.e., we only obtain a path if we differentiate $F_2$.
\end{example}

To resolve an (possibly) implicit connection, we have to ensure that there also exists a path in the graph of the \gls{DDAE} and not only in the shifting graph. This can be achieved by differentiating the equation that does not depend on the highest derivative in the equivalence class.
To ensure that all equations which need to be shifted are explicitly connected we do not need to resolve all possible implicit connections but only those that ensure a direct path in the graph of the \gls{DDAE}. Consider the following example. 

\begin{example}
	\label{ex:LPnotSolvable}
	Consider the \gls{DDAE}	
	\begin{equation}
	\label{eq:LPnotSolvable}
	\dot{x}_1 = f_1,\qquad \dot{x}_1 = x_2 + f_2,\qquad 0 = x_1 + x_2 + \shift{x_3}{-\tau} + f_3,
	\end{equation}
	which can be transformed to the \gls{DDAE} \eqref{eq:Ex2} by inserting $F_1$ in $F_2$ and then inserting the result into $F_3$ yielding
	\begin{align*}
		\dot{x}_1 = f_1, \quad x_1 + \xnsd{3}{-1}{0} + f_1 - f_2 + f_3. 
	\end{align*} 
	Constructing the shifting graph of \eqref{eq:LPnotSolvable} and applying the shifting step to it, yields that the equation $F_3$ is exposed  and connected through alternating paths to $F_1$ and $F_2$. Thus all three equations need to be shifted and we need to find a direct connection in the graph of the \gls{DDAE}. The possible connections of these equations are visualized in  \Cref{fig:LPnotSolvable2} and \Cref{fig:LPnotSolvable3} by red edges. 
\end{example} 
\begin{figure}
	\begin{subfigure}[t]{0.30\textwidth}
		\centering
		\begin{tikzpicture}
		\enode[try]{1}{$F_1$}
		\enode[try]{2}{$F_2$}
		\enode[try]{3}{$F_3$}
		\vnode{1}{$x_1, \dot{x}_1$}{}
		\vnode{2}{$x_2$}{}
		\vnode[low]{3}{$\shift{x_3}{-\tau}$}{}
		\edge[assign]{e1}{v1}
		\edge{e2}{v1}
		\edge{e3}{v1}
		\edge[assign]{e2}{v2}
		\edge{e3}{v2}
		\edge[low]{e3}{v3}
		\end{tikzpicture}
		\caption{The shifting graph}	
		\label{fig:LPnotSolvable}	
	\end{subfigure}
	\,
	\begin{subfigure}[t]{0.30\textwidth}
	\centering
	\begin{tikzpicture}
	\enode[try]{1}{$F_1$}
	\enode[try]{2}{$F_2$}
	\enode[try]{3}{$F_3$}
	\vnode{1}{$x_1, \dot{x}_1$}{}
	\vnode{2}{$x_2$}{}
	\vnode[low]{3}{$\shift{x_3}{-\tau}$}{}
	\path
	(de1) edge[color = red, very thick] (dv1)
	(de2) edge[color = red, very thick] (dv1)
	(de2) edge[color = red, very thick] (dv2)
	(de3) edge (dv1)
	(de3) edge[color = red, very thick] (dv2)
	(de3) edge[verylightgray, dashed, very thin] (dv3);
	\end{tikzpicture}
	\caption{first possible connection}	
	\label{fig:LPnotSolvable2}
	\end{subfigure}
	\,
\begin{subfigure}[t]{0.30\textwidth}
	\centering
	\begin{tikzpicture}
	\enode[try]{1}{$F_1$}
	\enode[try]{2}{$F_2$}
	\enode[try]{3}{$F_3$}
	\vnode{1}{$x_1, \dot{x}_1$}{}
	\vnode{2}{$x_2$}{}
	\vnode[low]{3}{$\shift{x_3}{-\tau}$}{}
	\path
	(de1) edge[color = red, very thick] (dv1)
	(de2) edge (dv1)
	(de3) edge[color = red, very thick] (dv1)
	(de2) edge[color = red, very thick] (dv2)
	(de3) edge[color = red, very thick] (dv2)
	(de3) edge[verylightgray, dashed, very thin] (dv3);
	\end{tikzpicture}
	\caption{second possible connection}	
	\label{fig:LPnotSolvable3}
\end{subfigure}
	\caption{The shifting graph of \eqref{ex:LPnotSolvable} and possible connections}
\end{figure}
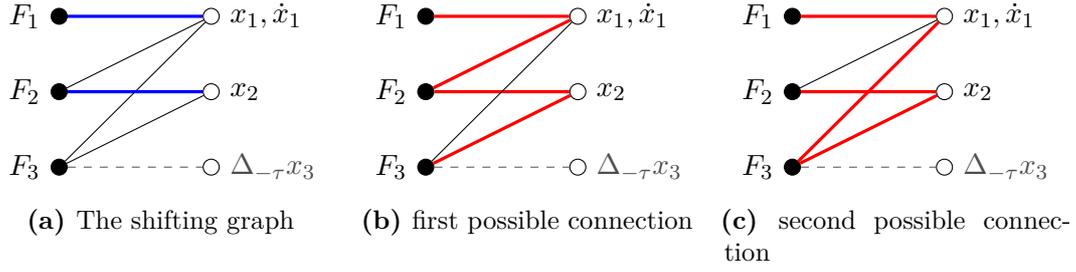	

We need to identify all possible connections. Given a matching $\mathcal{M}$, suppose that the equation $F_j$ is exposed and and the set $C_{F_j}$ as defined in \eqref{eq:MSS} is given by
\begin{align*}
		C_{F_j} \defEqual \left\{F_k \in \VE \mid \text{there exists an alternating path between } F_j \text { and } F_k \text{ in } G \right\}. 
\end{align*}
We define a \defn{connection for} $F_j$ to be a set of connected alternating paths $(F_i,v_k, F_\ell)$ with  $(v_k, F_\ell) \in \mathcal{M}$, $(F_i, v_k) \in E \setminus \mathcal{M} $ such that all  $(v_k, F_\ell) \in \mathcal{M}$ occur exactly once.
\begin{example}
	For \Cref{ex:LPnotSolvable} we immediately observe that there are two possible connections, which are displayed in \Cref{fig:LPnotSolvable2,fig:LPnotSolvable3}. Note that the connection in \Cref{fig:LPnotSolvable2} is a single alternating path, while the connection in \Cref{fig:LPnotSolvable3} consists of two alternating paths.
\end{example}

For each connection $P$ we need to decide which equations need to be differentiated. Denote by $F_{i_1}$ the exposed equation and by $F_{i_k}$ for $k=2,\ldots,K$ the equations in $C_{F_{i_1}}$. Let $\nu_{k}$ denote the number how often we have to differentiate the $i_k$th equation. For each path 
$(F_{i_{k_j}},v_j,F_{i_{k_{j+1}}}) \in P$ ($j=1,\ldots,J$) with $v_j\in \VVs$ define $b_j\in\mathbb{Z}$ to be the number how many more times we have to differentiate $F_{i_{k_j}}$ than $F_{i_{k_{j+1}}}$ such that there exists a path to a highest derivative of both equations in the graph of the \gls{DDAE}. This results in the linear system
\begin{equation}
	\label{eq:linearSystemLP}
	\nu_{k_j} - \nu_{k_{j+1}} = b_j\qquad j=1,\ldots,J.
\end{equation}
which we compactly write as $A\nu = b$ with $\nu = [\nu_k]\in\mathbb{N}_0^K$, $b = [b_j]\in\mathbb{Z}^J$ and $A\in\mathbb{Z}^{J\times K}$. 
Since we do not want to differentiate equations more than necessary, we have to solve the linear integer program
\begin{equation}
	\label{eq:LPDifInShift}
	\begin{aligned}
		&\min \sum_{k=1}^K \nu_k,\\
		&\text{such that\ }  A\nu = b,\quad \nu\in\mathbb{N}_0.
	\end{aligned}
\end{equation}

\begin{example}
	\label{ex:diffDuringShift}
	For the \gls{DDAE} \eqref{eq:LPnotSolvable} the first connection given in \Cref{fig:LPnotSolvable2} results in the linear program 
	\begin{displaymath}
	\min \nu_1 + \nu_2 + \nu_3,\quad \text{s.t. } 
		\begin{bmatrix}
		-1 & 1 & 0\\
		0 & -1 & 1
		\end{bmatrix} \nu =  \begin{bmatrix}
		0\\0
		\end{bmatrix},\quad \nu \in \N_0
	\end{displaymath}
	with the understanding that $\nu_k$ determines how often we have to differentiate the $F_k$th equation. It is easy to see that the 0 solution $\nu_1 = \nu_2 = \nu_3 = 0$ solves \eqref{eq:LPDifInShift}, i.\,e., we do not have to differentiate any equation. The other connection \Cref{fig:LPnotSolvable3} results in the linear integer program \eqref{eq:LPDifInShift} given via
	\begin{displaymath}
	A = \begin{bmatrix}
	-1 & 0 & 1 \\
	0 & -1 & 1
	\end{bmatrix}\qquad\text{and}\qquad b = \begin{bmatrix}
	1\\0
	\end{bmatrix}
	\end{displaymath}	
	with the unique solution $\nu_1 = 0$ and $\nu_2= \nu_3 = 1$. Hence, we have to differentiate $F_2$ and $F_3$ to resolve the connection.
\end{example}

A relaxation of the linear integer program \eqref{eq:LPDifInShift} is the linear program
\begin{equation}
	\label{eq:LPDifInShiftEquiv}
	\begin{aligned}
		&\min \sum_{k=1}^K \nu_k,\\
		&\text{such that\ }  A\nu = b,\quad \nu\geq 0.
	\end{aligned}
\end{equation}
In fact, we have the following result.

\begin{theorem}
	\label{thm:linearIntegerProgram}\ 
	\begin{enumerate}
		\item\label{it:linearProgram} The linear program \eqref{eq:LPDifInShiftEquiv} has a unique solution.  
		\item The linear integer program \eqref{eq:LPDifInShift} is solvable if and only if the linear program \eqref{eq:LPDifInShiftEquiv} is solvable. In this case, the minimizing vector $\nu^\star$ of \eqref{eq:LPDifInShiftEquiv} is the unique minimizer of \eqref{eq:LPDifInShift}.
	\end{enumerate}
\end{theorem}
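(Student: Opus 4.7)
The key structural observation is that the constraint matrix $A \in \mathbb{Z}^{J \times K}$ is (up to sign) the incidence matrix of a graph $T$ whose vertices are the equations $F_{i_1}, \dots, F_{i_K}$ in $C_{F_j}$ and whose $J$ edges are the alternating paths making up the connection $P$. Since every matching edge in $C_{F_j}$ is covered by exactly one path and $F_j$ is the unique exposed equation in $C_{F_j}$ (cf.\ \Cref{lem:AugmentpathMSS}), one has $J = K - 1$; combined with the connectedness imposed on the connection, this forces $T$ to be a spanning tree on the $K$ equations. Both facts are used below.

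For part (i), observe that each row of $A$ has exactly one $+1$ and one $-1$, so $A\mathbf{1} = 0$; conversely, any $\nu \in \ker A$ must be constant along each edge of $T$, hence constant on all of $T$, so $\ker A = \mathrm{span}(\mathbf{1})$. Exploiting the tree structure, I obtain an integer particular solution $\nu_0 \in \mathbb{Z}^K$ to $A\nu = b$ by rooting $T$ at an arbitrary vertex and propagating integer values outward using $\nu_{k_j} - \nu_{k_{j+1}} = b_j \in \mathbb{Z}$. The feasible set of $A\nu = b$ is then $\{\nu_0 + t\mathbf{1} : t \in \mathbb{R}\}$, and the nonnegativity constraint becomes $t \geq t_{\min} := -\min_k (\nu_0)_k$. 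Since the objective $\sum_k \nu_k = \sum_k (\nu_0)_k + K t$ is strictly increasing in $t$, it is uniquely minimized at $t = t_{\min}$, giving the unique LP minimizer $\nu^\star := \nu_0 + t_{\min}\mathbf{1}$.

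For part (ii), note that $\nu_0$ is integer and $t_{\min}$ equals the negative of an integer coordinate of $\nu_0$, so $\nu^\star \in \mathbb{N}_0^K$. Hence $\nu^\star$ is feasible for \eqref{eq:LPDifInShift}, and since the ILP feasible set is contained in the LP feasible set while the two objectives agree, $\nu^\star$ is also the unique minimizer of \eqref{eq:LPDifInShift}. Solvability of \eqref{eq:LPDifInShift} trivially implies solvability of \eqref{eq:LPDifInShiftEquiv}; the converse is precisely the integrality argument just given. The main technical obstacle is the structural translation in the first paragraph --- identifying $A$ with the incidence matrix of a spanning tree by carefully unpacking the combinatorics of connections. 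Once that is in place, both parts reduce to elementary linear algebra and a one-dimensional minimization.
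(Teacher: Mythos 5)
Your proposal is correct and follows essentially the same route as the paper: both exploit $J=K-1$ together with connectedness to get a one-dimensional kernel spanned by the all-ones vector, describe the feasible set of \eqref{eq:LPDifInShiftEquiv} as a half-line in that direction, minimize the (strictly increasing) objective at the boundary of nonnegativity, and use integrality of $b$ to conclude the LP minimizer lies in $\mathbb{N}_0^K$ and hence solves \eqref{eq:LPDifInShift}. Your explicit spanning-tree/incidence-matrix reading of $A$ and the integer particular solution obtained by propagation are just a more detailed packaging of the paper's rank and ``integer differences'' arguments.
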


\begin{proof}
	\ 
	\begin{enumerate}
		\item Recall that $J$ is the number of equations in the linear system $A\nu = b$ and $K$ is the number of unknowns. We immediately observe $J = K-1$ and $\rank(A) = K-1$ (the equations $F_{i_k}$ form a connected graph). Thus $A$ has full row rank and $A \nu = b$ is solvable for each $b$. 
		Thus there exists $\widehat{\nu} = [\widehat{\nu}_k]\in\R^K$ such that $A\widehat{\nu} = b$. Define $\alpha^\star = -\min \{\widehat{\nu}_k \mid k=1,\ldots,K\}$. Let $e \defEqual \begin{bmatrix}
				1 & \dots & 1\end{bmatrix}^T\in\R^K$. Then the feasible set of \eqref{eq:LPDifInShiftEquiv} is
		\begin{equation}
			\label{eq:feasibleSetLPDifInShiftEquiv}
			\{\widehat{\nu} + \alpha e \mid \alpha \geq \alpha^\star\}.
		\end{equation}
		We conclude that $\nu^\star = \widehat{\nu} + \alpha^\star e$ is the unique minimizer of \eqref{eq:LPDifInShiftEquiv}.
		\item Using \ref{it:linearProgram} it suffices to show that \eqref{eq:LPDifInShift} is solvable whenever \eqref{eq:LPDifInShiftEquiv} is solvable. Let $\nu^\star = [\nu_k^\star]$ denote the unique minimizer of \eqref{eq:LPDifInShiftEquiv}. Then the proof of \ref{it:linearProgram} implies $\min\{\nu_k^\star \mid k=1,\ldots,K\} = 0$. Since the equations \eqref{eq:linearSystemLP} imply that the difference between two entries of $\nu^\star$ is an integer, we conclude $\nu^\star\in\mathbb{N}_0^K$. The result follows from the observation that the feasible set of \eqref{eq:LPDifInShift} is given by
		\begin{displaymath}
			\{\nu^\star + \alpha e \mid \alpha \in\mathbb{N}_0\}.\qedhere
		\end{displaymath}
	\end{enumerate}
\end{proof}

\Cref{thm:linearIntegerProgram} implies that we can compute the solution (provided it exists) of the linear integer program \eqref{eq:LPDifInShift} with standard methods such as the simplex algorithm or interior-point methods \cite{NeMe65, Ka84}. Alternatively, we can exploit the structure of the feasible set \eqref{eq:feasibleSetLPDifInShiftEquiv} and simply compute one solution of the linear system $A\nu = b$ and shift the solution as described in the proof of \Cref{thm:linearIntegerProgram} \ref{it:linearProgram}.

In summary, we need to solve for each connection one linear program, which is always solvable. If we take of all solutions, the maximal number of differentiations for each equation, all implicit connections in the shifting graph are resolved. This is due to the fact that although we keep only the highest derivative/shift of an equation in our graph, one later uses all intermediate equations as well. 

\begin{example}
\label{ex:LPNotSolvableContinued} 
	Continuing with \Cref{ex:diffDuringShift} we have to differentiate $F_2$ and $F_3$ once. This immediately implies that both connections $\{(F_2, \{x_1, \dot{x}_1\}, F_1), (F_3, \{x_2\}, F_2 ) \}$ and $\{(\dot{F}_3, \{x_1, \dot{x}_1\}, F_1), (\dot{F}_3, \{x_2\}, \dot{F}_2 ) \}$ are direct.  
\end{example}
The complete algorithm is stated in \Cref{alg:DiffDuringShift}. Even though we perform differentiations during the shifting step, the differentiations do not influence the shifting graph (cf. \Cref{prop:PDDAEdiffDoesntEffectShift}). Thus we do not update the shifting graph in the differentiation step but only the graph of the \gls{DDAE} which is used to identify the number of needed differentiations. 

\begin{algorithm}
	\caption{The differentiation during the shifting step}\label{alg:DiffDuringShift}	
	\begin{algorithmic}[1]
		\Require  graph $\code{G}=  (\Cov_\Coe \dot{\cup} \Cov_\Cov, \Coe )$, shifting graph $\code{G}^\code{s} = (\Cov_\Coe^\code{s} \dot{\cup} \Cov_\Cov^\code{s}, \Coe^\code{s} )$, $\code{colorV}$, $\code{colorE}$, $\code{assign}$, exposed equation $\code{F}_\coj \in \Cov_\Coe$ 
		\Ensure graph $\code{G}$
		\Statex
		\State Find all connections $\Cop^\ell$ for $\Cof_\coj$ 
		\For{each connection $\Cop^\ell$}
			\State Construct and solve the LP \eqref{eq:LPDifInShift} with solution $\nu^\ell$  
		\EndFor 
		\State $\code{colorEdif}(\Cof_\coi) \gets \arg\max_{\ell}  \nu^\ell_\coi$ for all $\coi$ \Comment{number of differentiations for each equation}
		\State $\Cog \gets \code{Algorithm6}(\Cog, \code{dif}, \code{true}, [], \code{colorEdif})$ 
	\end{algorithmic}
\end{algorithm}

\subsection{Trimmed linearization}
\label{subsec:trimmedLinearization}

The differentiation during the shifting step yields a graph of the \gls{DDAE}, which might contain higher-order derivatives. The Pantelides algorithm is only applicable to equations that contain derivatives up to first-order, namely first-order systems. Thus, we need to reformulate the \gls{DDAE} as a first-order system. Since we summarize in the shifting graph all variables which only differ by their order of differentiation in the same equivalence class, it is sufficient to reformulate the \gls{DDAE} after the shifting step. 

The Pantelides algorithm is based on the information which variable appears in which equation. It, therefore, suffices to introduce new variables for variables that are differentiated more than once.  This approach is similar to the first-order formulation of multi-body systems, where only new variables for the velocities are introduced but not for the derivative of the Lagrange multiplier. 

\begin{remark}
	The process of reformulating a higher-order system to a first-order system is called linearization in the literature. The variant that we use here, where only variables that actually appear in the equations are replaced, is referred to as a trimmed linearization \cite{MeS06, Sc11}. If state transformations are allowed, then a minimal number of new variables can be introduced \cite{Sc11}, which in turn minimizes smoothness requirements and benefits numerical methods. From a structural point of view, such a state transformation is, however, not feasible.
\end{remark}

After the shifting step, each equation in the shifting graph is matched to a highest shifted variable. Thus we need to ensure that this property is still fulfilled after performing trimmed linearization. 

\begin{example}
	Applying the shifting step introduced in \Cref{subsec:DiffInShift} to the \gls{DDAE}
	\begin{align*}
		x_1 \xnsd{1}{0}{1} = \xnsd{2}{-1}{0} + f_1, \quad \xnsd{2}{0}{1} = \xnsd{3}{-1}{0} + f_2, \quad 0 = \xnsd{1}{-1}{0} + f_3 
	\end{align*}
	in Hessenberg form, we obtain the shifted and differentiated  \gls{DDAE}	
	\begin{align*}
		\xnsd{1}{2}{0} \xnsd{1}{2}{2} + \xnsd{1}{2}{1}^2 & = \xnsd{2}{1}{1} + \shift{\dot{f}_1}{ 2 \tau}, \\ 
		\xnsd{2}{1}{1} & = \xnsd{3}{0}{0} + \shift{f_2}{\tau}, \\ 
		0 & = \xnsd{1}{2}{2} + \shift{\ddot{f}_3}{3 \tau} 
	\end{align*}
	which contains a second derivative of $x_1$. To perform the differentiation step, we need to reformulate this \gls{DDAE} as first order system by introducing 
	\begin{align}
		\label{eq:notShifted}
		y_1 = \dot{x}_1. 
	\end{align}	
	If we replace variables accordingly we obtain the first order \gls{DDAE}
	\begin{align}
		\label{eq:TrimmedLinearizationEx1}
		\begin{aligned}
					\xnsd{1}{2}{0} \ynsd{1}{2}{1} + \ynsd{1}{2}{0}^2 & = \xnsd{2}{1}{1} + \shift{\dot{f}_1}{ 2 \tau}, \\ 
			\xnsd{2}{1}{1} & = \xnsd{3}{0}{0} + \shift{f_2}{\tau}, \\ 
			0 & = \ynsd{1}{2}{1} + \shift{\ddot{f}_3}{3 \tau},
		\end{aligned}
	\end{align}
together with \eqref{eq:notShifted}. No variable that appears in \eqref{eq:notShifted} is a highest shift and consequently we cannot assign in the shifting graph each equation to an equivalence class. If we shift \eqref{eq:notShifted} twice, the corresponding shifting graph contains a maximal matching. 
\end{example}

The following theorem provides a first order system such that applying the shifting step to it yields no shifts.  

\begin{theorem}
	\label{thm:trimmedLinearization}
	Consider a \gls{DDAE} that depends on variables that are differentiated more than once, i.e., there exists $i\in\{1,\ldots,\stateDim\}$ such that
	\begin{displaymath}
		q_i \defEqual \max \left\{ \ell\,\left|\,\shift{x_i^{(\ell)}}{k \tau},\, k \ge 0\right.\right\} \geq 2,
	\end{displaymath}
	and assume that there exists a maximal matching in the shifting graph. Define 
	\begin{displaymath}
		\ell_p \defEqual \max\{k \mid \xnsd{i}{k}{q} \text{ appears in some equation for }  q \ge p  \}.
	\end{displaymath}
	Replace for $1 \le p < q_i$ the variable $\xnsd{i}{k}{p}$ by $\ynsd{i_p}{k}{0}$ and $\xnsd{i}{k}{q_i}$ by $\ynsd{y_{q_i - 1}}{k}{1}$. 
	We add the equations 
	\begin{align*}
		\xnsd{i}{\ell_1}{1} & = \ynsd{i_1}{\ell_1}{0} \\
		\ynsd{i_1}{\ell_2}{1} & = \ynsd{i_2}{\ell_2}{0} \\
		& \vdots \\
		\ynsd{i_{q_i - 2}}{\ell_{q_i-1}}{1} & = \ynsd{i_{q_i - 1}}{\ell_{q_i-1}}{0}
	\end{align*}
	to the DDAE. Then the new DDAE has a maximal matching in its shifting graph.
\end{theorem}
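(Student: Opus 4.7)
My plan is to construct an explicit maximal matching in the shifting graph of the new DDAE by extending the assumed maximal matching of the original. The trimmed linearization only touches $x_i$: it replaces $\shift{x_i^{(p)}}{k\tau}$ by $\shift{y_{i_p}}{k\tau}$ for $1\le p<q_i$ and $\shift{x_i^{(q_i)}}{k\tau}$ by $\shift{\dot y_{i_{q_i-1}}}{k\tau}$ throughout the original equations, and adds $q_i-1$ coupling equations at the shifts $\ell_1,\dots,\ell_{q_i-1}$. Extend the $\ell$-notation by setting $\ell_0\defEqual\max\{k\mid \shift{x_i^{(q)}}{k\tau}\text{ appears for some }q\ge 0\}$ and $y_{i_0}\defEqual x_i$. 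Because the set of admissible $q$ shrinks as $p$ grows, $\ell_0\ge\ell_1\ge\dots\ge\ell_{q_i-1}\ge 0$, the lower bound coming from the fact that $q_i$ is attained at some shift $\ge 0$. All equations not involving $x_i$, together with their incident equivalence classes, are untouched by the substitution, so the restriction of the original matching to those equations transfers verbatim. What remains is to match, at most, the one original equation $F_{j_0}$ that the old matching sent to $[\shift{x_i}{\ell_0\tau}]$ together with the $q_i-1$ couplings, against the $q_i$ new classes $\bigl\{[\shift{y_{i_p}}{\ell_p\tau}]\bigr\}_{0\le p\le q_i-1}$.

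\textbf{Highest shifts and matching.} Writing $S_p$ for the set of shifts at which $x_i^{(p)}$ appears in the original, I first verify that $[\shift{y_{i_p}}{\ell_p\tau}]$ is the highest-shift class of $y_{i_p}$ in the new graph. For $1\le p\le q_i-2$ the variable $y_{i_p}$ appears at shifts $S_p\cup\{\ell_p\}$ (the extra $\ell_p$ comes from the left-hand side of the $p$-th coupling) and $\dot y_{i_p}$ only at shift $\ell_{p+1}$; all of these are bounded above by $\ell_p$, and analogous bookkeeping handles $p=0$ and $p=q_i-1$ (where $\dot y_{i_{q_i-1}}$ takes the role of $x_i^{(q_i)}$ with shifts in $S_{q_i}\subseteq\{k\le\ell_{q_i-1}\}$). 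If no original equation is matched to $[\shift{x_i}{\ell_0\tau}]$ (Case A), I simply send the $p$-th coupling to $[\shift{y_{i_p}}{\ell_p\tau}]$ for $p=1,\dots,q_i-1$ and we are done. Otherwise (Case B) $F_{j_0}$ is matched through a specific variable $\shift{x_i^{(p^*)}}{\ell_0\tau}$ with $p^*\in\{0,1,\dots,q_i\}$; set $p^\dagger\defEqual\min(p^*,q_i-1)$. The mere occurrence of that variable forces $\ell_0\le\ell_{p^\dagger}$ (by definition of $\ell_{p^\dagger}$), and combining with monotonicity yields the crucial chain $\ell_0=\ell_1=\dots=\ell_{p^\dagger}$. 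Now match $F_{j_0}$ to $[\shift{y_{i_{p^\dagger}}}{\ell_{p^\dagger}\tau}]$ via the substituted variable, the $p$-th coupling to $[\shift{y_{i_{p-1}}}{\ell_{p-1}\tau}]$ for $1\le p\le p^\dagger$ (legal because $\ell_{p-1}=\ell_p$ places $\shift{\dot y_{i_{p-1}}}{\ell_p\tau}$ in the highest-shift class of $y_{i_{p-1}}$), and the $p$-th coupling to $[\shift{y_{i_p}}{\ell_p\tau}]$ for $p^\dagger<p\le q_i-1$. These assignments are pairwise disjoint and exhaust all $q_i$ new classes without collision.

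\textbf{Main obstacle.} The principal technical issue is the case analysis around the representative $p^*$ picked by the old matching, and in particular the index collapse $p^*\in\{q_i-1,q_i\}\mapsto p^\dagger=q_i-1$ caused by rewriting $\shift{x_i^{(q_i)}}{k\tau}$ as $\shift{\dot y_{i_{q_i-1}}}{k\tau}$. Once the chain $\ell_0=\dots=\ell_{p^\dagger}$ is established, the ``shift-down'' rerouting of couplings $1,\dots,p^\dagger$ onto the lower classes follows at once.
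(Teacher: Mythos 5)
Your construction follows essentially the same route as the paper's proof: transfer the old matching on all untouched equations, and repair only the part concerning $x_i$ by reassigning the coupling equations to their left- or right-hand-side classes, using the monotonicity $\ell_1\ge\ell_2\ge\dots$ and the forced equality of shifts along the chain when the old matched occurrence is a derivative of $x_i$. Your case analysis (Case A, and the explicit $p^*$, $p^\dagger$ with the top shift $\ell_0$) is if anything slightly more careful than the paper's, which tacitly assumes the highest-shift class of $x_i$ is matched, but the underlying argument is the same.
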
 

\begin{proof}
	Let $\mathcal{M}$ denote the maximal matching in the shifting graph before we replace variables and add new equations. Then some equation $F_j$ is matched with the equivalence class corresponding to $\xnsd{i}{\ell}{0}$, where $\ell = \max\{\ell_1,\ldots,\ell_{q_i}\} = \ell_1$. Note that by assumption we have introduced $q_i-1$ new variables yielding $q_i-1$ new equivalence classes in the shift graph. If after replacing we can still match $F_j$ with the equivalence class corresponding to $\xnsd{i}{\ell}{0}$, then we can construct a maximal matching by assigning each new equation to the equivalence class of the variable on its right-hand side and obtain a maximal matching. Each equation is assigned to a highest shift since $\ell_i \ge \ell_{i+1}$ by construction. If after replacing we cannot assign $F_j$ to the equivalence class corresponding to $\xnsd{i}{\ell}{0}$, then at least one of the newly introduced variables must appear in $F_j$ and we can assign $F_j$ to the associated equivalence class of $\ynsd{i_k}{\ell}{0}$ which appears on the right hand side. Thus we can assign all equations 
	\begin{align*}
		\xnsd{i}{\ell}{1} & = \ynsd{i_1}{\ell}{0} \\
		\ynsd{i_1}{\ell}{1} & = \ynsd{i_2}{\ell}{0} \\
		& \vdots \\
		\ynsd{i_{k-1}}{\ell}{1} & = \ynsd{i_{k}}{\ell}{0}
	\end{align*} 
	to the left hand side which are shifted all $\ell$ times by construction of $\ell_i$. The remaining equations can be assigned to the right hand side which are by construction highest shifts.
\end{proof}

\subsection{The Algorithm}
Before we summarize our findings in form of an algorithm, let us briefly recap the results of this section.
\begin{itemize}
	\item We illustrated that we can use the original Pantelides algorithm \cite{Pan88} (see \Cref{alg:PantDAE}) with a different equivalence class to determine which equations need to be shifted. Recall that we may have to differentiate some equations during the shifting step to resolve implicit connections via equivalence classes (cf.\ \cref{subsec:DiffInShift}). To account for the additional differentiation, we present a slight modification of \Cref{alg:PantDAE} in \Cref{alg:PantDAENew}. We emphasize that the notation $\xi_{\coi,\cok}$ for $\xi \in \{\Cof,\cox \}$ has a different meaning for shifting and differentiation. In the case of shifting, the $\cok$ represents the order of shift while in the case of differentiation $\cok$ represents the order of differentiation. This enables us to shift in the shifting graph and differentiate in the differentiation graph with the same notation.
	\item Due to the structure of the equivalence classes, we shift first and then differentiate, see \Cref{prop:PDDAEdiffDoesntEffectShift}. Note that we do not have to resolve implicit connections within the differentiation step, since we only determine the maximal number that each equation needs to be shifted and differentiated. Since a possible implicit connection in the differentiation step results from equations that are already shifted, the implicit connection is resolved automatically by using the same equation with a smaller shift than the maximal shift that was determined in the shifting step.
	\item The differentiation during the shifting step might yield higher-order derivatives. Since the Pantelides algorithm can only handle first-order systems, the resulting \gls{DDAE} must be transformed into one. This can be achieved using trimmed linearization. 
\end{itemize}

The complete methodology is summarized in \Cref{alg:PantDDAE}.

\begin{remark}
	During the shifting and differentiation step, we have to update the graph and the corresponding shifting and differentiation graphs. To unify this procedure, the necessary updates of the graph are summarized in \Cref{UpdateGraph}. This algorithm is called in the modified Pantelides algorithm (\Cref{alg:PantDAENew}).
\end{remark}

\begin{algorithm}
	\caption{Generalization of \Cref{alg:PantDAE} for shifting and differentiation}\label{alg:PantDAENew}	
	\begin{algorithmic}[1]
		\Require \begin{itemize}
			\item[] \hspace{-2em}graph $\Cog = (\Cov_\Coe \dot{\cup} \Cov_\Cov, \Coe)$ 
			\item[] denote by $\shift{\Cof_\coj^{(\ell)}}{\com \tau}$, respectively  $\shift{\cox_\coi^{(\cok)}}{\con \tau} $ the elements of $\Cov_\Coe $, respectively  $\Cov_\Cov$
			\item[] $\code{type}\in \{\code{shift}, \code{dif} \}$
			\item[] graph $\Cog^\code{t} = (\Cov_\Coe^\code{t} \dot{\cup} \Cov_\Cov^\code{t}, \Coe^\code{t})$ of type $\code{type}$	
			\item[]  $\Cov_\Coe^\code{t} = \begin{cases}
			\left\{ \Cof_{\coj, \ell} \mid  \shift{\Cof_\coj^{(\ell)}}{\com \tau} \in \Cov_\Coe \right\} & \text{if } \code{type} == \code{dif} \\
			\left\{ \Cof_{\coj, \ell} \mid \shift{\Cof_\coj^{(\com)}}{\ell \tau} \in \Cov_\Coe \right\} & \text{if } \code{type} == \code{shift} \\
			\end{cases}$							
			\item[]  $\Cov_\Cov^\code{t} = \begin{cases}
			\left\{ \cox_{\coi, \cok} \mid  \shift{\cox_\coi^{(\cok)}}{\con \tau} \in \Cov_\Cov\right\} & \text{if } \code{type} == \code{dif} \\
			\left\{ \cox_{\coi, \cok} \mid \shift{\cox_\coi^{(\con)}}{\cok \tau} \in \Cov_\Cov\right\} & \text{if } \code{type} == \code{shift} \\
			\end{cases}$	
		\end{itemize}
		\Ensure  
		\begin{itemize}
			\item[] \hspace{-2em} graph $\Cog= (\Cov_\Coe \dot{\cup} \Cov_\Cov, \Coe)$
		\end{itemize}
			\Statex
			\State $\code{assign} ( \cox_{\coi, \cok} ) \leftarrow \code{0} $ for each $\cox_{\coi, \cok} \in \Cov_\Cov^\code{t} $
			\For{$\cor=1,\dots,\code{M} $} 
				\State $(\cop,\coq) \leftarrow (\cor,0) $
				\Repeat
					\State $\tilde{\Coe}^\code{t} \gets \Coe^\code{t} \setminus \{ \{\cox_{\coi, \cok},\Cof_{\coj,\ell}\} \in \Coe^\code{t} \mid \cox_{\coi,\cok+ \code{s}} \in \Cov_\Cov^\code{t} \text{ for some } \code{s}>0  \} $ \label{PDAEn5}
					\State $\tilde{\Cov}_\Cov^\code{t} \gets \Cov_\Cov^\code{t} \setminus   \{\cox_{\coi, \cok} \mid \cox_{\coi,\cok+ \code{s}} \in \Cov_\Cov^\code{t} \text{ for some } \code{s}>0  \} $ \label{PDAEn6}
					\State $\code{colorV}(\cox_{\coi, \cok} ) \leftarrow 0 $ for each $\cox_{\coi, \cok} \in \tilde{\Cov}_\Cov^\code{t} $
					\State $\code{colorE}(\Cof_{\coj, \ell} ) \leftarrow 0$ for each $ \Cof_{\coj, \ell} \in \Cov_\Coe^\code{t}  $ \label{PDAEn4}
					\State (\code{pathfound}, \code{assign}, \code{colorV}, \code{colorE}) $\leftarrow$ \text{Algorithm1}($(\Cov_\Coe^\code{t} \dot{\cup} \tilde{\Cov}_\Cov^\code{t},\tilde{\Coe}^\code{t} )$, $\elementVE_{\cop,\coq}$, 
					\Statex \hspace{4em}\code{assign}, \code{colorV}, \code{colorE}) \Comment{apply Augmentpath} 
					\If{\code{pathfound} == \code{false}  } 
						\If{$\code{type}==\code{shift}$}
							\State $\Cog \gets \code{Algorithm3}(\Cog, \Cog^\code{t}, \code{colorV}, \code{colorE}, \code{assign}, \Cof_{\cop,\coq})$\label{PDAEn12}	
							\Statex \Comment{differentiation during the shifting step}
						\EndIf 		
						\State $ \Cog^\code{t} \gets \code{Algorithm6}(\Cog^\code{t}, \code{type}, \code{boolFull} \gets \code{false},\code{colorV}, \code{colorE})$ 
						\Statex \Comment{shift or different the equations}
						\State $ \Cog \gets \code{Algorithm6}(\Cog, \code{type}, \code{boolFull} \gets \code{true}, \code{colorV}, \code{colorE})$	
						\Statex \Comment{shift or different the equations}									
						\ForEach{ $\coi$ with $\code{colorV}(\cox_{\coi, \cok} ) == 1$ and $\Cof_{\coj, \ell} == \code{assign}(\cox_{\coi, \cok} )$}
							\State  $\code{assign}(\cox_{\coi, \cok+1} ) \gets \Cof_{\coj,\ell}$
						\EndFor
					\EndIf  
				\Until{\code{pathfound} = \code{true}  }
			\EndFor	
	\end{algorithmic}
\end{algorithm}

\begin{algorithm}[htb]
	\caption{Pantelides Algorithm for \glspl{DDAE}}\label{alg:PantDDAE}	
	\begin{algorithmic}[1]
		\Require graph $\Cog = (\Cov_\Coe \dot{\cup} \Cov_\Cov, \Coe)$, denote by $\shift{\Cof_\coj^{(\ell)}}{\com \tau}$, respectively  $\shift{\cox_\coi^{(\cok)}}{\con \tau} $ the elements of $\Cov_\Coe $, respectively  $\Cov_\Cov$.
		\Ensure  graph $\Cog$
		\Statex 
		\Statex \textbf{Step 1:} Shifting
		\State $\Cov_\Coe^\code{s} \gets \left\{ \Cof_{\coj, 0} \mid \Cof_\coj \in \Cov_\Coe \right\} $ \label{PDDAE1} 
		\State $\Cov^\code{s}_\Cov \gets \left\{ \cox_{\coi, \cok} \mid \shift{\cox_\coi^{(\con)}}{\cok \tau} \in \Cov_\Cov \right\} $
		\State  $\Coe^\code{s}\gets \{\{\cox_{\coi, \cok}, \Cof_{\coj, \ell}\} \mid \exists \con, \com\in \N_0\text{ s.\,t. } \{\shift{\cox_\coi^{(\con)}}{\cok \tau}, \shift{\Cof_\coj^{(\com)}}{\ell \tau}  \} \in \Coe \}$ \label{PDDAE3}
		\State  $\Cog^\code{s} \gets (\Cov_\Cov^\code{s} \dot{\cup} \Cov^\code{s}_\Coe, \Coe^\code{s}) $ \label{PDDAE4}
		\State  $\Cog\phantom{^\code{s}}$ $\gets$ \code{Algorithm4}($\Cog$, $\code{shift}$, $\Cog^\code{s}$) \label{PDDAE5} \Comment{Generalization of the Pantelides algorithm}
		\Statex
		\Statex \textbf{Step 2:} Trimmed Linearization 
		\State Add equations to graph $\Cog$ according to \Cref{thm:trimmedLinearization}
		\Statex 
		\Statex \textbf{Step 3:} Differentiation
		\State $\Cov^\cod_\Coe \gets\left\{ \Cof_{\coj, \ell} \mid  \shift{\Cof_\coj^{(\ell)}}{\com \tau} \in \tilde{\Cov}_\Coe \right\}  $  \label{PDDAE6}
		\State 	$ \Cov^\cod_\Cov \gets \left\{ \cox_{\coi, \cok} \mid  \shift{\cox_\coi^{(\cok)}}{\con \tau} \in \tilde{\Cov}_\Cov \right\}  $ \label{PDDAE7}
		\State  $\Coe^\cod\gets \{\{\cox_{\coi, \cok}, \Cof_{\coj, \ell}\} \mid \exists \con, \com \text{ s.\,t. } \{\shift{\cox_\coi^{(\cok)}}{\con \tau}, \shift{\Cof_\coj^{(\ell)}}{\com \tau}   \} \in \tilde{\Coe} \}$	
		\State $\Cog^\cod \gets (\Cov^\cod_\Coe \dot{\cup} \Cov^\cod_\Cov, \Coe^\cod)$ \label{PDDAE9}  
		\State $\Cog\phantom{^\cod} \gets \code{Algorithm4}(\Cog, \code{dif}, \Cog^\cod)$ \label{PDDAE10}  \Comment{Generalization of the Pantelides algorithm}
	\end{algorithmic}
\end{algorithm}

\begin{algorithm}[tb]
	\caption{Shift or differentiate equations}\label{UpdateGraph}	
	\begin{algorithmic}[1]
		\Require \begin{itemize}
			\item[] \hspace{-2em}graph $\Cog = (\Cov_\Coe \dot{\cup} \Cov_\Cov, \Coe)$ 
			\item[] $\code{type}\in \{\code{shift}, \code{dif} \}$, $\code{boolFull} \in \{\code{true}, \code{false} \}$
			\item[] $\code{colorV}$, \code{colorE} 
		\end{itemize}
		\Ensure  
		\begin{itemize}
			\item[] \hspace{-2em} graph $\tilde{\Cog}= (\tilde{\Cov}_\Cov \dot{\cup} \tilde{\Cov}_\Coe, \tilde{\Coe})$
		\end{itemize}
			\Statex
			\If{$\code{boolFull} == \code{false}$}
				\State $\cox_{\coi, \cok,0} \gets \cox_{\coi, \cok} \in \Cov_\Cov$ for all $\coi, \cok$
				\State $\Cof_{\coj, \ell, 0} \gets \Cof_{\coj, \ell} \in \Cov_\Coe$ for all $\coj, \ell$
			\ElsIf{$\code{boolFull} == \code{true}$ and $\code{type}==\code{shift}$}
				\State $\cox_{\coi, \cok,\con} \gets \shift{\cox_\coi^{(\con)}}{\cok \tau}  \in \Cov_\Cov$ for all $\coi, \cok, \con $
				\State $\Cof_{\coj, \ell, \com} \gets \shift{\Cof_\coj^{(\com)}}{\ell \tau} \in \Cov_\Coe$ for all $\coj, \ell, \com$
			\Else
				\State $\cox_{\coi, \cok,\con} \gets \shift{\cox_\coi^{(\cok)}}{\con \tau} \in \Cov_\Cov$ for all $\coi, \cok, \con$	
				\State $\Cof_{\coj, \ell, \com} \gets\shift{\Cof_\coj^{(\ell)}}{\com \tau} \in \Cov_\Coe$ for all $\coj, \ell, \com$	
			\EndIf	
			\State $\tilde{\Cov}_\Cov \gets \{\cox_{\coi, \cok,\con}\in \Cov_\Cov \}$
			\State $\tilde{\Cov}_\Coe \gets \{\Cof_{\coj, \ell, \com} \in \Cov_\Coe  \}$
			\State $\tilde{\Coe} \gets \left\{ \{\cox_{\coi, \cok,\con},\Cof_{\coj, \ell, \com}  \} \in \Coe \right\}$
			\If{\code{colorV} == []}
				\State for each $\cox_{\coi,\cok,\con} \in \tilde{\Cov}_\Cov$ define  $\code{colorV}(\cox_{\coi,\cok,\con }) \gets \max \{ \coq \mid  \Cof_{\coj, \ell, \com} \in \tilde{\Cov}_\Coe ,\, \{\cox_{\coi,\cok,\con},  \Cof_{\coj, \ell, \com} \} \in \tilde{\Coe}  \text{ and } \code{colorE}( \Cof_{\coj, \ell, \com}) == \coq \}$
			\EndIf 
			\State $\tilde{\Cov}_\Cov\gets \tilde{\Cov}_\Cov \cup \{\cox_{\coi, \cok+\cop, \con} \mid \cox_{\coi, \cok, \con} \in \tilde{\Cov}_{\Cov},\,   \code{colorV}(\cox_{\coi, \cok, \con})== \coq,\, 1 \le \cop \le \coq  \}$					
			\ForEach{$(\coj,\ell)$ s.\,t.\ $\code{colorE}(\Cof_{\coj, \ell, \com}) == \coq$, $\coq >0$}
				\State $\Cof_{\coj,\ell, \com} \gets \Cof_{\coj, \ell+\coq, \com}$
				\If{$ \code{type} == \code{shift}$}
					\State$\left\{\cox_{\coi, \cok, \con}, \Cof_{\coj, \ell, \com}\right\} \in \tilde{\Coe} \gets \left\{\cox_{\coi, \cok+1, \con}, \Cof_{\coj, \ell, \com}\right\}  $ for each $\coi, \cok, \con$
				\Else
					\State     $\tilde{\Coe} \gets \tilde{\Coe} \cup \left\{\{\cox_{\coi, \cok+\cop, \con},\Cof_{\coj, \ell, \com}\} \mid \cox_{\coi, \cok+\cop, \con} \in \tilde{\Cov}_{\Cov},\,  \{\cox_{\coi, \cok, \con},\Cof_{\coj, \ell,\com} \}\in \tilde{\Coe},\,  1 \le \cop \le \coq \right\}$
				\EndIf
			\EndFor		
			\State $\tilde{\Cog} \gets (\tilde{\Cov}_\Cov \dot{\cup} \tilde{\Cov}_\Coe, \tilde{\Coe})$							
	\end{algorithmic}
\end{algorithm}

\section{Examples}
\label{sec:examples}
We illustrate \Cref{alg:PantDDAE} with two examples. The first two examples, which we discuss in \Cref{subsec:exDDAEComplicated} and \Cref{subsec:exDDAEComplicated2}, are linear toy examples that illustrates the different steps of \Cref{alg:PantDDAE}. The third example (cf.~\Cref{subsec:exHybridNumExperimental}) is a nonlinear DDAE that arises in real-time dynamic substructuring \cite{BurW08}.

\subsection{Academic example 1}
\label{subsec:exDDAEComplicated}
The initial shifting graph of the \gls{DDAE}
\begin{align}
	\label{eq:PDDAEEx}
	\begin{aligned}
		 0 & = x_1 + f_1 \\
		 \dot{x}_1 & = \shift{x_2}{-\tau} + f_2\\
		 \dot{x}_2 & = x_1 + \shift{x_3}{-\tau} + f_3 \\
		 \dot{x}_3 & = x_4 + \shift{x_1}{-\tau} + f_4
		\end{aligned}
\end{align}
is given in \Cref{fig:DAEEx3G1} and in the notation of \Cref{alg:PantDDAE} by the sets 
	\begin{align*}
		\VEs & \defEqual \left\{ \Cof_{1,0}, \Cof_{2,0}, \Cof_{3,0}, \Cof_{4,0}  \right\},\\
		\VVs & \defEqual \left\{ \cox_{1,0}, \cox_{2,0}, \cox_{3,0},\cox_{4,0}, \cox_{1,-1}, \cox_{2,-1}, \cox_{3,-1} \right\}, \\
		E^\mathrm{s} & \defEqual \left\{ \{\Cof_{1,0}, \cox_{1,0} \},\{\Cof_{2,0}, \cox_{1,0} \}, \{\Cof_{2,0}, \cox_{2,-1} \}, \{\Cof_{3,0}, \cox_{1,0} \},\{\Cof_{3,0}, \cox_{2,0} \}, \right. \\
		& \hspace{4ex}\left. \{\Cof_{3,0}, \cox_{3,-1} \},\{\Cof_{4,0}, \cox_{3,0} \},\{\Cof_{4,0}, \cox_{4,0} \},\{\Cof_{4,0}, \cox_{1,-1} \}  \right\},
	\end{align*} 
	where each $\cox_{i,j}$ denotes one equivalence class. We observe that there is no maximal matching in the shifting graph and we may choose either $F_1$ or $F_2$ as exposed (cf.~\Cref{fig:DAEEx3G2}). 
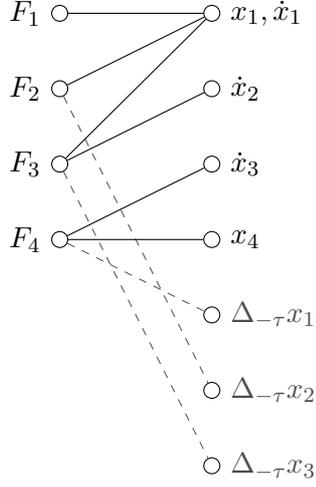
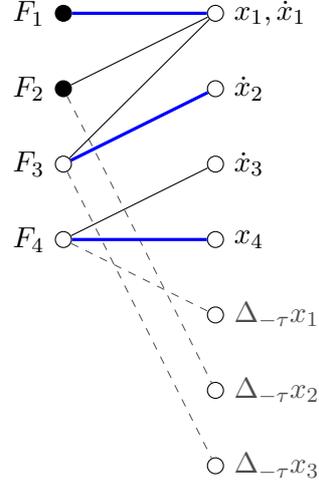
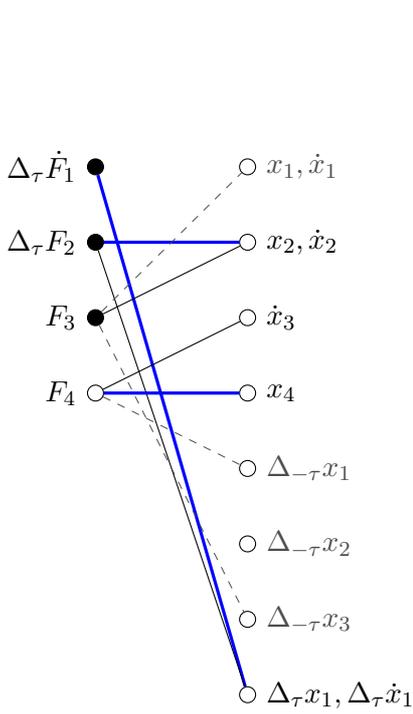
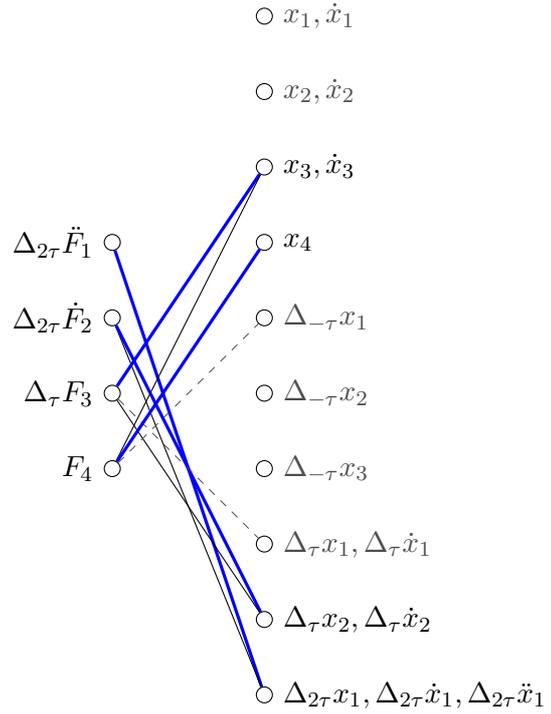
\begin{figure}
	\begin{subfigure}[t]{0.45\textwidth}
		\centering
		\begin{tikzpicture} 
			\enode{1}{$F_1$}
			\enode{2}{$F_2$}
			\enode{3}{$F_3$}
			\enode{4}{$F_4$}
			\vnode{1}{$x_1, \dot{x}_1$}{}
			\vnode{2}{$\dot{x}_2$}{}
			\vnode{3}{$\dot{x}_3$}{}
			\vnode{4}{$x_4$}{}
			\vnode[low]{5}{$\shift{x_1}{-\tau}$}{}
			\vnode[low]{6}{$\shift{x_2}{-\tau}$}{}
			\vnode[low]{7}{$\shift{x_3}{-\tau}$}{}
			\edge{e1}{v1}
			\edge{e2}{v1}
			\edge[low]{e2}{v6}
			\edge{e3}{v1}
			\edge{e3}{v2}
			\edge[low]{e3}{v7}
			\edge{e4}{v3}
			\edge{e4}{v4}
			\edge[low]{e4}{v5}						
		\end{tikzpicture}	
		\caption{Initial shifting graph}
		\label{fig:DAEEx3G1}
	\end{subfigure}
	\qquad
	\begin{subfigure}[t]{0.45\textwidth}
		\centering
		\begin{tikzpicture} 
			\enode[try]{1}{$F_1$}
			\enode[try]{2}{$F_2$}
			\enode{3}{$F_3$}
			\enode{4}{$F_4$}
			\vnode{1}{$x_1, \dot{x}_1$}{}
			\vnode{2}{$\dot{x}_2$}{}
			\vnode{3}{$\dot{x}_3$}{}
			\vnode{4}{$x_4$}{}
			\vnode[low]{5}{$\shift{x_1}{-\tau}$}{}
			\vnode[low]{6}{$\shift{x_2}{-\tau}$}{}
			\vnode[low]{7}{$\shift{x_3}{-\tau}$}{}
			\edge[assign]{e1}{v1}
			\edge{e2}{v1}
			\edge[low]{e2}{v6}
			\edge{e3}{v1}
			\edge[assign]{e3}{v2}
			\edge[low]{e3}{v7}
			\edge{e4}{v3}
			\edge[assign]{e4}{v4}
			\edge[low]{e4}{v5}		
		\end{tikzpicture}	
		\caption{Shifting graph with assignment}
		\label{fig:DAEEx3G2}
	\end{subfigure}\\[1em]
	\begin{subfigure}[t]{0.45\textwidth}
		\centering
		\begin{tikzpicture} 
			\enode[try]{1}{$\shift{\dot{F}_1}{\tau}$}
			\enode[try]{2}{$\shift{F_2}{\tau}$}
			\enode[try]{3}{$F_3$}
			\enode{4}{$F_4$}
			\vnode[low]{1}{$x_1, \dot{x}_1$}{}
			\vnode{2}{$x_2, \dot{x}_2$}{}
			\vnode{3}{$\dot{x}_3$}{}
			\vnode{4}{$x_4$}{}
			\vnode[low]{5}{$\shift{x_1}{-\tau}$}{}
			\vnode[low]{6}{$\shift{x_2}{-\tau}$}{}
			\vnode[low]{7}{$\shift{x_3}{-\tau}$}{}
			\vnode{8}{$\shift{x_1}{\tau}, \shift{\dot{x}_1}{\tau}$}{}
			\edge[assign]{e1}{v8}
			\edge{e2}{v8}
			\edge[assign]{e2}{v2}
			\edge[low]{e3}{v1}
			\edge{e3}{v2}
			\edge[low]{e3}{v7}
			\edge{e4}{v3}
			\edge[assign]{e4}{v4}
			\edge[low]{e4}{v5}		
		\end{tikzpicture}	
		\caption{Shifting graph after differentiating $F_1$ and shifting $\dot{F}_1$ and $F_2$ with assignment}
		\label{fig:DAEEx3G3}	
	\end{subfigure}
	\qquad
	\begin{subfigure}[t]{0.45\textwidth}
	\centering
	\begin{tikzpicture} 
		\enode{4}{$\shift{\ddot{F}_1}{2 \tau}$}
		\enode{5}{$\shift{\dot{F}_2}{2 \tau}$}
		\enode{6}{$\shift{F_3}{\tau}$}
		\enode{7}{$F_4$}
		\vnode[low]{1}{$x_1, \dot{x}_1$}{}
		\vnode[low]{2}{$x_2, \dot{x}_2$}{}
		\vnode{3}{$x_3, \dot{x}_3$}{}
		\vnode{4}{$x_4$}{}
		\vnode[low]{5}{$\shift{x_1}{-\tau}$}{}
		\vnode[low]{6}{$\shift{x_2}{-\tau}$}{}
		\vnode[low]{7}{$\shift{x_3}{-\tau}$}{}
		\vnode[low]{8}{$\shift{x_1}{\tau}, \shift{\dot{x}_1}{\tau}$}{}
		\vnode{9}{$\shift{x_2}{\tau}, \shift{\dot{x}_2}{\tau}$}{}
		\vnode{10}{$\shift{x_1}{2\tau}, \shift{\dot{x}_1}{2\tau}, \shift{\ddot{x}_1}{2\tau}$}{}
		\edge[assign]{e4}{v10}
		\edge{e5}{v10}
		\edge[assign]{e5}{v9}
		\edge[low]{e6}{v8}
		\edge{e6}{v9}
		\edge[assign]{e6}{v3}
		\edge{e7}{v3}
		\edge[assign]{e7}{v4}
		\edge[low]{e7}{v5}								
	\end{tikzpicture}	
	\caption{Shifting graph after differentiating $\shift{\dot{F}_1}{\tau}$, $\shift{F_2}{\tau}$ and shifting $\shift{\ddot{F}_1}{\tau}$, $\shift{\dot{F}_2}{\tau}$ and $F_3$  }
	\label{fig:DAEEx3G4}	
\end{subfigure}
	\caption{Visualization of the shifting step from \Cref{alg:PantDDAE} for the \gls{DDAE}~\eqref{eq:PDDAEEx}}
\end{figure}
Since both are connected via a matching, we shift $F_1$ and $F_2$. Note, that $F_3$ has a path to $F_1$ and $F_2$ but the path is not an alternating path. Thus, we do not shift $F_3$. The equations $F_1$ and $F_2$ are only implicitly connected via the equivalence class $\{x_1, \dot{x}_1\}$. Constructing the linear integer program as described in \Cref{subsec:DiffInShift} yields 
	\begin{align*}
	\min \nu_1 + \nu_2 \quad \text{ such that }\quad \begin{bmatrix}
	1  & -1 
	\end{bmatrix} \begin{bmatrix}
	\nu_1 \\ \nu_2 
	\end{bmatrix} = \begin{bmatrix}
	1
	\end{bmatrix}, \quad \nu_1, \nu_2 \in \N_0
	\end{align*}
	with the solution $\nu_1 = 1$, $\nu_2 = 0$. Consequently, we differentiate $F_1$. In the resulting graph \Cref{fig:DAEEx3G3} a possible matching is giving by $\mathcal{M} = \{ \{\Cof_{1,1}, \cox_{1,1} \}, \{\Cof_{2,1}, \cox_{2,0} \}, \{\Cof_{4,0}, \cox_{4,0} \} \}$. The matching is not maximal and the exposed vertex $\Cof_{3,1}$ is connected through an alternating path to $\Cof_{1,1}$ and $\Cof_{2,1}$. Thus we need to shift these three equations. The linear program for these three equations
	\begin{align*}
	\min \nu_1 + \nu_2+ \nu_3 \quad \text{ such that } \quad \begin{bmatrix}
	0 & 1  & -1 \\ 1 & -1 & 0
	\end{bmatrix} \begin{bmatrix}
	\nu_1 \\ \nu_2 \\ \nu_3
	\end{bmatrix} = \begin{bmatrix}
	1 \\0
	\end{bmatrix}, \quad \nu_1, \nu_2, \nu_3 \in \N_0
	\end{align*}	
	with solution $\nu_1 = \nu_2 = 1$ and $\nu_3 = 0$ yields that we differentiate the first two equations. The resulting shifting graph \Cref{fig:DAEEx3G4} provides a maximal matching. 
	The differentiation during the shifting step results in a single variable which is differentiated twice or more, namely $\shift{\ddot{x}_1}{2 \tau}$. Since the Pantelides algorithm can only handle derivatives up to first-order, we replace this variable in the trimmed linearization step by $\dot{y}_{1_0}$ and add the equation $G_{1_0}$ given by 
	\begin{align*}
		y_{1_0} = \dot{x}_1. 
	\end{align*} 
	Since $\shift{\ddot{x}_1}{2\tau}$ appears, we shift $G_{1_0}$ twice. To enlarge the full graph by this variable and equation, we to rename $y_{1_0}$ to $x_5$ and $G_{1_0}$ to $F_5$. This yields the graph of the \gls{DDAE}
	\begin{align*}
		\tilde{V}_\mathrm{E} & \defEqual \left\{ \codeFnsd{1}{2}{2}, \codeFnsd{2}{2}{1}, \codeFnsd{3}{1}{0}, \codeFnsd{4}{0}{0}, \codeFnsd{5}{2}{0}, \right\}, \\
		\tilde{V}_\mathrm{V} & \defEqual  \left\{ \codexnsd{1}{1}{0}, \codexnsd{1}{2}{0}, \codexnsd{1}{2}{1}, \codexnsd{2}{1}{0}, \codexnsd{2}{1}{1}, \codexnsd{3}{0}{0}, \codexnsd{3}{0}{1}, \codexnsd{4}{0}{0}, \codexnsd{5}{2}{0}, \codexnsd{5}{2}{1} \right\}, \\
		\tilde{E} & \defEqual \left\{ \{\codeFnsd{1}{2}{2}, \codexnsd{1}{2}{0} \}, \{\codeFnsd{1}{2}{2}, \codexnsd{5}{2}{0} \}, \{\codeFnsd{1}{2}{2}, \codexnsd{5}{2}{1} \}, \{\codeFnsd{2}{2}{1}, \codexnsd{2}{1}{0} \}, \{\codeFnsd{2}{2}{1}, \codexnsd{2}{1}{1} \}, \right. \\
		& \hspace{4.2ex}\left.  \{\codeFnsd{2}{2}{1}, \codexnsd{5}{2}{0} \}, \{\codeFnsd{2}{2}{1}, \codexnsd{5}{2}{1} \}, \{\codeFnsd{3}{1}{0}, \codexnsd{1}{1}{0}\}, \{\codeFnsd{3}{1}{0}, \codexnsd{2}{1}{1}\}, \{\codeFnsd{3}{1}{0}, \codexnsd{3}{0}{0}\},  \{\codeFnsd{4}{0}{0}, \codexnsd{3}{0}{1}\},  \right. \\
		& \hspace{4.2ex}\left. \{\codeFnsd{4}{0}{0}, \codexnsd{4}{0}{0}\},  \{\codeFnsd{5}{2}{0}, \codexnsd{1}{2}{1}\},  \{\codeFnsd{5}{2}{0}, \codexnsd{5}{2}{0}\}           \right\}. 
	\end{align*} 
	Now we construct the differentiation graph for the equations $\shift{\ddot{F}_1}{2 \tau}, \shift{\dot{F}_2}{2 \tau}, \shift{F_3}{\tau}, F_4$ and $\Fnsd{5}{2}{0}$ given by   
	\begin{align*}
		\VEd & \defEqual \left\{ \Cof_{1,2}, \Cof_{2,1}, \Cof_{3,0}, \Cof_{4,0},  \Cof_{5,0}   \right\},\\
		\VVd & \defEqual \left\{ \cox_{1,0}, \cox_{2,0}, \cox_{3,0},\cox_{4,0}, \cox_{1,1}, \cox_{2,1}, \cox_{3,1}, \cox_{5,0}, \cox_{5,1} \right\}, \\
		E^\mathrm{d} & \defEqual \left\{ \{\Cof_{1,2}, \cox_{1,0} \},\{\Cof_{1,2}, \cox_{1,1} \}, \{\Cof_{1,2}, \cox_{5,1} \}, \{\Cof_{2,1}, \cox_{2,0} \},\{\Cof_{2,1}, \cox_{1,1} \},\{\Cof_{2,1}, \cox_{2,1} \},\{\Cof_{2,1}, \cox_{5,1} \}, \right. \\
		& \hspace{4ex}\left. \{\Cof_{3,0}, \cox_{1,0} \},\{\Cof_{3,0}, \cox_{3,0} \},\{\Cof_{3,0}, \cox_{2,1} \}, \{\Cof_{4,0}, \cox_{4,0} \},\{\Cof_{4,0}, \cox_{3,1} \},\{\Cof_{5,0}, \cox_{1,1} \}, \{\Cof_{5,0}, \cox_{5,0} \}    \right\},
	\end{align*} 
	or respectively \Cref{fig:DAEEx3G5}. Note that the variable $\shift{x_1}{-\tau}$ does not occur in the differentiation graph since it is shifted. The vertex $\shift{F_3}{\tau}$ is exposed and connected through an alternating path to $\shift{\ddot{F}_1}{2 \tau}$ and $\shift{\dot{F}_2}{2 \tau}$. Furthermore $\Fnsd{5}{2}{0}$ is exposed. Thus we differentiate all four equations which results in \Cref{fig:DAEEx3G6}. The existence of a maximal matching in \Cref{fig:DAEEx3G5} leads to the termination of \Cref{alg:PantDDAE}. We conclude that we shift the first, second and fifth equation twice, the third equation once and that we differentiate the first equation three time, the second equation twice and the third and fifth equation once.

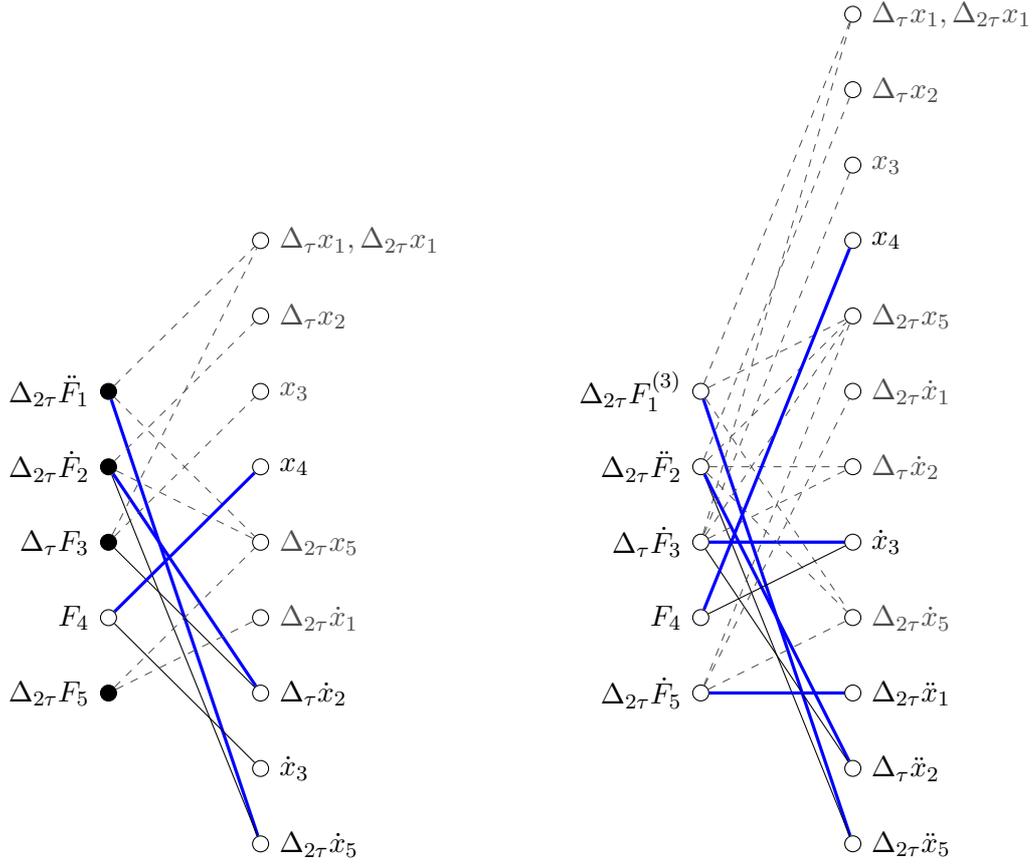
\begin{figure}[t]
	\begin{subfigure}[t]{0.45\textwidth}
		\centering
		\begin{tikzpicture} 
			\enode[try]{3}{$\shift{\ddot{F}_1}{2 \tau}$}
			\enode[try]{4}{$\shift{\dot{F}_2}{2 \tau}$}
			\enode[try]{5}{$\shift{F_3}{\tau}$}
			\enode{6}{$F_4$}
			\enode[try]{7}{$\Fnsd{5}{2}{0}$}
			\vnode[low]{1}{$\shift{x_1}{\tau},\shift{x_1}{2\tau} $}{}
			\vnode[low]{2}{$\shift{x_2}{\tau}$}{}
			\vnode[low]{3}{$x_3$}{}
			\vnode{4}{$x_4$}{}
			\vnode[low]{5}{$\xnsd{5}{2}{0}$}{}	
			\vnode[low]{6}{$\shift{\dot{x}_1}{2\tau} $}{}
			\vnode{7}{$\shift{\dot{x}_2}{\tau}$}{}
			\vnode{8}{$\dot{x}_3$}{}
			\vnode{9}{$\shift{\dot{x}_5}{2\tau} $}{}
			\edge[low]{e3}{v1}
			\edge[low]{e3}{v5}
			\edge[assign]{e3}{v9}
			\edge[low]{e4}{v5}
			\edge{e4}{v9}
			\edge[low]{e4}{v2}
			\edge[assign]{e4}{v7}
			\edge{e5}{v7}
			\edge[low]{e5}{v1}	
			\edge[low]{e5}{v3}
			\edge[assign]{e6}{v4}
			\edge{e6}{v8}
			\edge[low]{e7}{v5}
			\edge[low]{e7}{v6}										
		\end{tikzpicture}	
		\caption{Initial differentiation graph with assignment}
		\label{fig:DAEEx3G5}
	\end{subfigure}
	\qquad
	\begin{subfigure}[t]{0.45\textwidth}
		\centering
		\begin{tikzpicture} 
			\enode{6}{$\shift{F_1^{(3)}}{2 \tau}$}
			\enode{7}{$\shift{\ddot{F}_2}{2 \tau}$}
			\enode{8}{$\shift{\dot{F}_3}{\tau}$}
			\enode{9}{$F_4$}
			\enode{10}{$\Fnsd{5}{2}{1}$}
			\vnode[low]{1}{$\shift{x_1}{\tau},\shift{x_1}{2\tau} $}{}
			\vnode[low]{2}{$\shift{x_2}{\tau}$}{}
			\vnode[low]{3}{$x_3$}{}
			\vnode{4}{$x_4$}{}
			\vnode[low]{5}{$\xnsd{5}{2}{0}$}{}
			\vnode[low]{6}{$\shift{\dot{x}_1}{2\tau} $}{}
			\vnode[low]{7}{$\shift{\dot{x}_2}{\tau}$}{}
			\vnode{8}{$\dot{x}_3$}{}
			\vnode[low]{9}{$\shift{\dot{x}_5}{2\tau} $}{}
			\vnode{10}{$\shift{\ddot{x}_1}{2 \tau}$}{}
			\vnode{11}{$\shift{\ddot{x}_2}{\tau}$}{}
			\vnode{12}{$\shift{\ddot{x}_5}{2\tau} $}{}	
			\edge[low]{e6}{v1}
			\edge[low]{e6}{v5}
			\edge[low]{e6}{v9}
			\edge[assign]{e6}{v12}
			\edge[low]{e7}{v5}
			\edge[low]{e7}{v9}
			\edge[low]{e7}{v2}
			\edge[low]{e7}{v7}
			\edge{e7}{v12}
			\edge[assign]{e7}{v11}
			\edge[low]{e8}{v7}
			\edge[low]{e8}{v1}	
			\edge[low]{e8}{v3}
			\edge{e8}{v11}
			\edge[low]{e8}{v5}	
			\edge[assign]{e8}{v8}	
			\edge[assign]{e9}{v4}
			\edge{e9}{v8}	
			\edge[low]{e10}{v5}
			\edge[low]{e10}{v6}			
			\edge[low]{e10}{v9}
			\edge[assign]{e10}{v10}		
		\end{tikzpicture}	
		\caption{Differentiation graph after differentiating $\shift{\ddot{F}_1}{2\tau}$, $\shift{\dot{F}_2}{2\tau}$ and $\shift{F_3}{\tau}$ }
		\label{fig:DAEEx3G6}	
	\end{subfigure}
	\caption{Visualization of the differentiation step from \Cref{alg:PantDDAE} for the \gls{DDAE}~\eqref{eq:PDDAEEx}}
\end{figure}

\subsection{Academic example 2}
\label{subsec:exDDAEComplicated2}
In this example we finish the already partly executed Pantelides algorithm for DDAEs for  \eqref{eq:LPnotSolvable}. We have already seen in in \Cref{ex:LPNotSolvableContinued} that the shifting step results in equations $\shift{F_1}{\tau}$, $\shift{\dot{F}_2}{\tau}$ and $\shift{\dot{F}_3}{\tau}$. We detect in the trimmed linearization step that the only variable, depending on second or higher-order derivative is given by $\xnsd{1}{1}{2}$. Thus $\xnsd{1}{1}{2}$ and $\xnsd{1}{1}{1}$ are replaced by $\xnsd{4}{1}{1}$ and $\xnsd{4}{1}{0}$, respectively. To add the equation $\xnsd{4}{1}{0} = \xnsd{1}{1}{1}$ to the graph, the vertex $\xnsd{1}{1}{1}$ is added to the graph. This results in the differentiation graph \Cref{fig:DAEEx4G1} where $\shift{F_1}{\tau}$ is exposed and thus differentiated. The resulting graph \Cref{fig:DAEEx4G2} has a maximal matching and thus \Cref{alg:PantDDAE} terminates. 
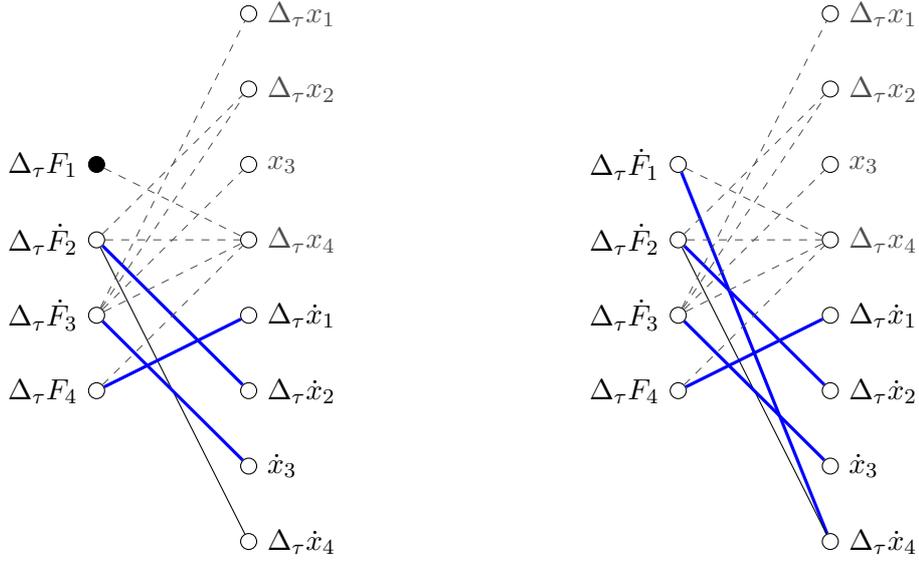
\begin{figure}[t]
	\begin{subfigure}[t]{0.45\textwidth}
		\centering
		\begin{tikzpicture} 
			\enode[try]{3}{$\shift{F_1}{\tau}$}
			\enode{4}{$\shift{\dot{F}_2}{\tau}$}
			\enode{5}{$\shift{\dot{F}_3}{\tau}$}
			\enode{6}{$\shift{F_4}{\tau}$}
			\vnode[low]{1}{$\shift{x_1}{\tau}$}{}
			\vnode[low]{4}{$\shift{x_4}{\tau}$}{}
			\vnode{8}{$\shift{\dot{x}_4}{\tau}$}{}
			\vnode[low]{2}{$\shift{x_2}{\tau}$}{}
			\vnode{6}{$\shift{\dot{x}_2}{\tau}$}{}
			\vnode[low]{3}{$x_3 $}{}
			\vnode{7}{$\dot{x}_3$}{}
			\vnode{5}{$\xnsd{1}{1}{1}$}{}
			\edge[low]{e3}{v4}
			\edge[low]{e4}{v4}
			\edge{e4}{v8}
			\edge[low]{e4}{v2}
			\edge[assign]{e4}{v6}
			\edge[low]{e5}{v1}
			\edge[low]{e5}{v4}
			\edge[low]{e5}{v2}	
			\edge[low]{e5}{v3}
			\edge[assign]{e5}{v7}	
			\edge[low]{e6}{v4}
			\edge[assign]{e6}{v5}								
		\end{tikzpicture}	
		\caption{Initial differentiation graph with assignment}
		\label{fig:DAEEx4G1}
	\end{subfigure}
	\qquad
	\begin{subfigure}[t]{0.45\textwidth}
		\centering
		\begin{tikzpicture} 
			\enode{3}{$\shift{\dot{F}_1}{\tau}$}
			\enode{4}{$\shift{\dot{F}_2}{\tau}$}
			\enode{5}{$\shift{\dot{F}_3}{\tau}$}
			\enode{6}{$\shift{F_4}{\tau}$}
			\vnode[low]{1}{$\shift{x_1}{\tau}$}{}
			\vnode[low]{4}{$\shift{x_4}{\tau}$}{}
			\vnode{8}{$\shift{\dot{x}_4}{\tau}$}{}
			\vnode[low]{2}{$\shift{x_2}{\tau}$}{}
			\vnode{6}{$\shift{\dot{x}_2}{\tau}$}{}
			\vnode[low]{3}{$x_3 $}{}
			\vnode{7}{$\dot{x}_3$}{}
			\vnode{5}{$\xnsd{1}{1}{1}$}{}
			\edge[low]{e3}{v4}
			\edge[assign]{e3}{v8}
			\edge[low]{e4}{v4}
			\edge{e4}{v8}
			\edge[low]{e4}{v2}
			\edge[assign]{e4}{v6}
			\edge[low]{e5}{v1}
			\edge[low]{e5}{v4}
			\edge[low]{e5}{v2}	
			\edge[low]{e5}{v3}
			\edge[assign]{e5}{v7}	
			\edge[low]{e6}{v4}
			\edge[assign]{e6}{v5}		
		\end{tikzpicture}	
		\caption{Differentiation graph after differentiating $\shift{F_1}{\tau}$ with assignments}
		\label{fig:DAEEx4G2}	
	\end{subfigure}
	\caption{Visualization of the differentiation step from \Cref{alg:PantDDAE} for the \gls{DDAE}~\eqref{eq:LPnotSolvable}}
\end{figure}

\subsection{Hybrid numerical-experimental model}
\label{subsec:exHybridNumExperimental}

In earthquake engineering, it is common to use a hybrid numerical-experimental approach to reduce unmanageable costs in testing complex dynamical systems \cite{BurW08}. The complete model is subdivided in a numerical part and an experimental component, which interact in real-time. A state delay arises naturally by transferring numerical results via hydraulic actuators to the experiment \cite{HorIKN99}. This delay essentially implies that the experiment is delayed compared to the numerical simulation. Here, we discuss a coupled oscillator-pendulum system, which is analyzed in \cite{KryBGHW06,Ung20} and depicted in
 \Cref{fig:substructuringPMSD}.
\begin{figure}
	\centering
	\begin{subfigure}[b]{0.4\textwidth}
		\centering
		\begin{tikzpicture}
		\tikzstyle{spring}=[thick,decorate,decoration={zigzag,pre length=0.3cm,post length=0.3cm,segment length=6}]
		\tikzstyle{damper}=[thick,decoration={markings,  
			mark connection node=dmp,
			mark=at position 0.5 with 
			{
				\node (dmp) [thick,inner sep=0pt,transform shape,rotate=-90,minimum width=15pt,minimum height=3pt,draw=none] {};
				\draw [thick] ($(dmp.north east)+(2pt,0)$) -- (dmp.south east) -- (dmp.south west) -- ($(dmp.north west)+(2pt,0)$);
				\draw [thick] ($(dmp.north)+(0,-5pt)$) -- ($(dmp.north)+(0,5pt)$);
			}
		}, decorate]
		
		\node (Mass) [fill,black!30,minimum width=3cm,minimum height=1.5cm] at (0,0) {};
		\node [left,xshift=-.2cm] at (Mass) {$m_1$};
		\node [above right] at (Mass) {$(\mathbf{x}_1,\mathbf{y}_1)$};
		\node (ground) at (Mass.south) [yshift=-2.3cm,fill,pattern=north east lines,draw=none,minimum width=5cm,minimum height=.3cm,anchor=north] {};
		\draw [very thick] (ground.north west) -- (ground.north east);
		\draw [spring] ($(ground.north) - (1,0)$) -- ($(Mass.south)-(1,0)$) node [midway, left,xshift=-.2cm] {$k$};
		\draw [damper] ($(ground.north) + (1,0)$) -- ($(Mass.south)+(1,0)$) node [midway, left,xshift=-.4cm] {$c$};
		
		\draw[gray,->] ($(Mass) + (0,0)$) -- ($(Mass) + (2.5,0)$) node[black,right] {$\mathbf{x}$};
		\draw[gray,->] ($(Mass) + (0,0)$) -- ($(Mass) + (0,1.5)$) node[black,right] {$\mathbf{y}$};
		
		\node (pendulum) at ($(Mass) + (2.5,-1.5)$) {};
		
		\draw[gray,dashed,thick,->] (pendulum) arc (-31:-6:2.9cm);
		\draw[gray,dashed,thick,->] (pendulum) arc (-31:-56:2.9cm);
		
		\draw [fill=black] (pendulum) circle (0.15);
		\node [right,xshift=.2cm] at (pendulum) {$m_2$};
		\node [below,yshift=-.15cm] at (pendulum) {$(\mathbf{x}_2,\mathbf{y}_2)$};
		\draw [very thick] (0,0) -- (pendulum) node [pos=0.7,above right] {$l$};			
		
		\draw[fill=black] (Mass) circle (0.1);
		\end{tikzpicture}
		\caption{Fully coupled system}
		\label{fig:substructuringPMSD:coupled}
	\end{subfigure}
	\begin{subfigure}[b]{0.58\textwidth}
		\centering
		\begin{tikzpicture}
		\tikzstyle{spring}=[thick,decorate,decoration={zigzag,pre length=0.3cm,post length=0.3cm,segment length=6}]
		\tikzstyle{damper}=[thick,decoration={markings,  
			mark connection node=dmp,
			mark=at position 0.5 with 
			{
				\node (dmp) [thick,inner sep=0pt,transform shape,rotate=-90,minimum width=15pt,minimum height=3pt,draw=none] {};
				\draw [thick] ($(dmp.north east)+(2pt,0)$) -- (dmp.south east) -- (dmp.south west) -- ($(dmp.north west)+(2pt,0)$);
				\draw [thick] ($(dmp.north)+(0,-5pt)$) -- ($(dmp.north)+(0,5pt)$);
			}
		}, decorate]
		
		\node (Mass) [fill,black!30,minimum width=2cm,minimum height=1cm] at (0,0) {};
		\node (Fext) at ($(Mass) + (0,1.2)$) {};
		\draw [very thick,->] ($(Mass)$) -- (Fext) node[below right] {$F_{\mathrm{ext}}$};
		\node [left,xshift=-.2cm] at (Mass) {$m_1$};
		\node (ground) at (Mass.south) [yshift=-1.5cm,fill,pattern=north east lines,draw=none,minimum width=2.5cm,minimum height=.3cm,anchor=north] {};
		\draw [very thick] (ground.north west) -- (ground.north east);
		\draw [spring] ($(ground.north) - (.4,0)$) -- ($(Mass.south)-(.4,0)$) node [midway, left,xshift=-.15cm] {$k$};
		\draw [damper] ($(ground.north) + (.4,0)$) -- ($(Mass.south)+(.4,0)$) node [midway, right,xshift=.3cm] {$c$};
		
		\draw[fill=black] (Mass) circle (0.1);
		
		\node (numMod) [align=center,anchor=north] at ($(ground.south)-(0,.2)$) {\textsc{numerical}\\\textsc{model}};
		
		\draw[rounded corners] ($(ground.west |- Fext) + (-.2,.2)$) rectangle ($(ground.east |- numMod.south) + (.2,-.2)$);
		
		\begin{scope}[xshift=5cm]
		\node (Mass2) {};
		\node (Fmeasured) at ($(Mass2) + (0,1.2)$) {};
		\draw [very thick,->] ($(Mass2)$) -- (Fmeasured) node[below right] {$F_{\mathrm{pendulum}}$};
		\draw[fill=black] (Mass2) circle (0.1);
		\node (ground2) at ($(Mass2)-(0,0.5)$) [xshift=.7cm,yshift=-1.5cm,fill,pattern=north east lines,draw=none,minimum width=2.5cm,minimum height=.3cm,anchor=north] {};
		\draw [very thick] (ground2.north west) -- (ground2.north east);
		
		\node (pendulum) at ($(Mass2) + (1.1,-1.5)$) {};
		
		\draw[gray,dashed,thick,->] (pendulum) arc (-53.7:-33.7:1.86cm);
		\draw[gray,dashed,thick,->] (pendulum) arc (-53.7:-73.7:1.86cm);
		
		\draw [fill=black] (pendulum) circle (0.15);
		\node [right,xshift=.2cm] at (pendulum) {$m_2$};
		\draw [very thick] (0,0) -- (pendulum) node [pos=0.7,above right] {$l$};			
		
		\node (actuator) at ($(Mass2) - (0,0.9)$) {};
		\node (actBottom) at (actuator |- ground2.north) {};
		\draw [fill=black!50] ($(actBottom) - (0.1,0)$) rectangle ($(actuator) + (0.1,0)$);
		\draw ($(actuator) - (0.05,0)$) -- ($(actuator) - (0.05,-.55)$) -- ($(actuator) - (0.4,-.55)$) -- ($(actuator |- Mass2) - (0.4,0.1)$) -- ($(actuator |- Mass2) + (0.4,-0.1)$) -- ($(actuator) + (0.4,.55)$) -- ($(actuator) + (0.05,.55)$) -- ($(actuator) + (0.05,0)$);
		
		\node (expAct) [align=center,anchor=north] at ($(ground2.south)-(0,.2)$) {\textsc{experiment}\\\textsc{+ actuator}};
		
		\draw[rounded corners] ($(ground2.west |- Fmeasured) + (-.2,.2)$) rectangle ($(ground2.east |- expAct.south) + (.2,-.2)$);
		\end{scope}
		
		\draw [thick,->,shorten >=0.5cm,shorten <=0.5cm,] (Mass -| ground.east) -- (Mass2 -| ground2.west) node[midway,align=center] {\small adjust\\[.2em]\small position};
		
		\draw [thick,->,shorten >=0.5cm,shorten <=0.5cm,] (ground2.west) -- (ground.east) node[midway,align=center] {\small send\\[.2em]\small $F_{\mathrm{pendulum}}$};
		\end{tikzpicture}
		\caption{Hybrid numerical-experimental setup}
		\label{fig:substructuringPMSD:hybrid}
	\end{subfigure}
	\caption{Real-time dynamic substructuring for a coupled oscillator-pendulum system}
	\label{fig:substructuringPMSD}
\end{figure}
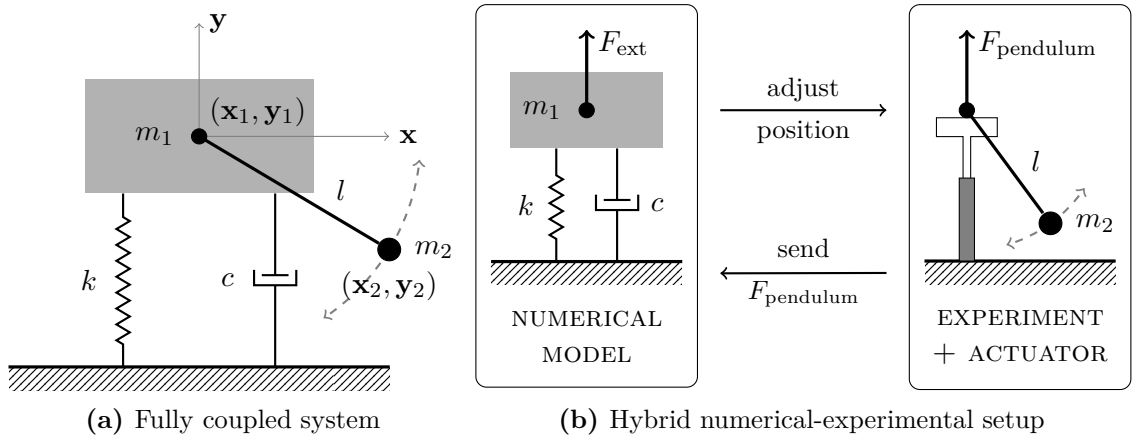
Following \cite{Ung20} the model equations are given by
\begin{subequations}
	\label{eq:PMSD:hybrid}
	\begin{align*}
		m_1\ddot{\mathbf{y}}_1 + c\dot{\mathbf{y}}_1+ k\mathbf{y}_1 &= -2(\shift{\lambda}{-\tau}) ( \shift{\mathbf{y}_2}{-\tau}-\shift{\mathbf{y}_1}{-\tau}) - m_2\mathsf{g},\\
		m_2 \shift{\ddot{x}_2}{-\tau} &= -2(\shift{\lambda}{-\tau}) \shift{x_2}{-\tau},\\
		m_2 \shift{\ddot{\mathbf{y}}_2}{-\tau} &= -2 (\shift{\lambda}{-\tau}) (\shift{\mathbf{y}_2}{-\tau}-\shift{\mathbf{y}_1}{-\tau})- m_2 \mathsf{g},\\
		0 &= \shift{x_2}{-\tau}^2 + (\shift{\mathbf{y}_2}{-\tau}-\shift{\mathbf{y}_1}{-\tau})^2 - l^2.
	\end{align*}
\end{subequations}
with constants $m_1, c, k, m_2, l,\mathsf{g}>0$.
To obtain a reformulation of \eqref{eq:PMSD:hybrid} which is suited for \Cref{alg:PantDDAE}, we rename $\lambda$ to $x_1$, $ \mathbf{y}_1$ to $x_3$, $\mathbf{y}_2$ to $x_4$ and rewrite \eqref{eq:PMSD:hybrid} as first-order \gls{DAE} given by 
\begin{subequations}
	\label{eq:PMSD:hybridFO}
	\begin{align*}
		\dot{x}_2 & = x_5, \\
		\dot{x}_3 & = x_6, \\
		\dot{x}_4 & = x_7, \\
		M\dot{x}_6 + C\dot{x}_3+ Kx_3 &= -2\shift{x_1}{-\tau} ( \shift{x_4}{-\tau}-\shift{x_3}{-\tau}) - m\mathsf{g},\\
		m \shift{\dot{x}_5}{-\tau} &= -2\shift{x_1}{-\tau} \shift{x_2}{-\tau},\\
		m \shift{\dot{x}_7}{-\tau} &= -2 \shift{x_1}{-\tau} (\shift{x_4}{-\tau}-\shift{x_3}{-\tau})- m\mathsf{g},\\
		0 &= \shift{x_2}{-\tau}^2 + (\shift{x_4}{-\tau}-\shift{x_3}{-\tau})^2 - l^2.
	\end{align*}
\end{subequations}
Since the equations $F_5$, $F_6$ and $F_7$ only depend on past time points we shift all three equations in the shifting step of \Cref{alg:PantDDAE} separately such that the differentiation during the shifting step yields no differentiation. No other equation is shifted. In the differentiation step we obtain the equations  $\dot{F}_1$, $\dot{F}_2$, $\dot{F}_3$, $F_4$, $\shift{F_5}{\tau}$, $\shift{F_6}{\tau}$ and $\shift{\ddot{F}_7}{\tau}$, which is the expected result \cite{Ung20}.

\section{Termination of the new algorithm}
\label{sec:termination}
Similarly as for the Pantelides algorithm we have to answer the question under which circumstances \Cref{alg:PantDDAE} applied to the \gls{DDAE} \eqref{eq:nlDDAE} terminates. As in the \gls{DAE} case (cf.\ \Cref{thm:TerminationPDAE}), we partition the set of variables $\setvars$ of \eqref{eq:nlDDAE} in such a way, that variables that only differ by the level of differentiation or shifting are grouped together. In more detail, we consider the equivalence relation
\begin{equation}
	\label{eq:equivRelDDAE}
	R_{\mathrm{equal}} \defEqual \left\{ (\elementsetvars, \tilde{\elementsetvars})\in\setvars\times\setvars \left|\,
	\begin{aligned}
		&\text{there exist } k,\ell \in \N,\,  p,q \in \N_0 \text{ and } \xi \in \set(x)\\ 
		&\text{such that } \elementsetvars = \shift{\xi^{(p)}}{k \tau},\,  \tilde{\elementsetvars} = \shift{\xi^{(q)}}{\ell \tau}
	\end{aligned}\right.  
	\right\}. 
\end{equation}
It is easy to see that if \Cref{alg:PantDDAE} applied to the \gls{DDAE} \eqref{eq:nlDDAE} terminates, then each equation is matched to a different variable and thus \eqref{eq:nlDDAE} is structurally nonsingular with respect to $\eqclass{\setvars}{R_{\mathrm{equal}}}$. The following result shows that this is indeed also a sufficient condition.

\begin{theorem}
\label{thm:TerminationPantDDAE3}
	Consider the \gls{DDAE} \eqref{eq:nlDDAE} with set of variables $\setvars$ and equivalence relation $R_{\mathrm{equal}}$ as defined in \eqref{eq:equivRelDDAE}. \Cref{alg:PantDDAE} applied to \eqref{eq:nlDDAE} terminates if and only if \eqref{eq:nlDDAE} is structurally nonsingular with respect to $\eqclass{\setvars}{R_{\mathrm{equal}}}$.
\end{theorem}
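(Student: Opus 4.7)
The plan is to mirror the proof of \Cref{thm:TerminationPDAE} by splitting the analysis according to the two main phases of \Cref{alg:PantDDAE}, namely the shifting step (with the auxiliary differentiations from \Cref{subsec:DiffInShift}) and the final differentiation step, and to reduce each of them to an instance of the classical Pantelides termination criterion. For the easy direction, note that if \Cref{alg:PantDDAE} terminates then both \Cref{alg:PantDAENew} calls have produced a maximal matching in their respective graphs. Since any two variables that are grouped in the same class under $R^{\mathrm{s}}$ or $R^{\mathrm{d}}$ are also related under $R_{\mathrm{equal}}$, the final matching in the differentiation graph descends to an injective assignment of equations of \eqref{eq:nlDDAE} to distinct elements of $\eqclass{\setvars}{R_{\mathrm{equal}}}$, which is exactly the negation of the existence of a structurally singular subset.

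For sufficiency, I would first handle the shifting step. In complete analogy with the extended \gls{DAE} construction \eqref{eq:extendedDAE}, I would add formal coupling equations between each variable and its shift by one delay and show, by the same cardinality computation as in \Cref{thm:TerminationPDAE}, that structural nonsingularity of \eqref{eq:nlDDAE} with respect to $\eqclass{\setvars}{R_{\mathrm{equal}}}$ is equivalent to structural nonsingularity of the extended system with respect to the trivial equivalence relation $R_{\mathrm{triv}}$, and equivalently to structural nonsingularity of the shifting graph with respect to $\eqclass{\setvars}{R^{\mathrm{s}}}$. By the shift-analog of \Cref{thm:TerminationPDAE}, this implies that the call to \Cref{alg:PantDAENew} with \code{type}=\code{shift} terminates. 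The implicit differentiations triggered inside the shifting step (\Cref{alg:DiffDuringShift}) are always realizable by \Cref{thm:linearIntegerProgram}, using that the alternating paths produced by \Cref{lem:AugmentpathMSS} yield a connected underlying graph and hence a linear program of the form \eqref{eq:LPDifInShift} with full row rank coefficient matrix.

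Next, I would argue that after termination of the shifting step, followed by the trimmed linearization of \Cref{thm:trimmedLinearization}, the resulting first-order \gls{DDAE} is structurally nonsingular with respect to $\eqclass{\setvars}{R^{\mathrm{d}}}$. The shifting step only replaces equations by shifted/differentiated versions and enlarges the variable set by higher-order shifts that were already represented as classes in the shifting graph, so the matching in the shifting graph supplies a matching in the differentiation graph modulo the trivial $R^{\mathrm{d}}$-grouping on the newly introduced variables. The trimmed linearization introduces, for each variable appearing with order $\ge 2$, exactly as many new variables as new coupling equations, so cardinalities on both sides of the structural inequality are preserved. Once structural nonsingularity in the differentiation graph is established, \Cref{thm:TerminationPDAE} applied to the differentiation graph (which by \Cref{prop:PDDAEdiffDoesntEffectShift} is untouched by the later differentiation of equations) yields termination of the differentiation step, completing the proof.

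The main obstacle I expect is the clean reduction of the mixed relation $R_{\mathrm{equal}}$ to the two separate relations $R^{\mathrm{s}}$ and $R^{\mathrm{d}}$ after all the intermediate bookkeeping: the implicit differentiations produced by \Cref{alg:DiffDuringShift}, the shifts carried out by \Cref{UpdateGraph}, and the auxiliary equations of \Cref{thm:trimmedLinearization} all enlarge the graph. Verifying that none of these operations destroys structural nonsingularity on the relevant graph — essentially, that each auxiliary equation comes together with exactly one new equivalence class of variables — is the delicate part; everything else is a straightforward adaptation of the cardinality bookkeeping used in the proof of \Cref{thm:TerminationPDAE}.
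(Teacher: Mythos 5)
Your overall architecture (treat the shifting phase and the differentiation phase separately, then feed the latter into \Cref{thm:TerminationPDAE}) is the same as the paper's, but two of your reductions do not go through as stated. For the shifting phase you claim that structural nonsingularity with respect to $\eqclass{\setvars}{R_{\mathrm{equal}}}$ is \emph{equivalent} to structural nonsingularity of the shifting graph with respect to $\Rs$, and you then invoke a ``shift-analog of \Cref{thm:TerminationPDAE}''. The first claim is false: for $F_1(\dot{x}_1)=0$, $F_2(x_1,\shift{x_1}{-\tau})=0$ every subset has at least as many $\Rs$-classes as equations, yet all variables lie in a single $R_{\mathrm{equal}}$-class, the system is structurally singular with respect to $\eqclass{\setvars}{R_{\mathrm{equal}}}$, and the shifting step loops forever. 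The deeper problem is that the ``shift-analog of \Cref{thm:TerminationPDAE}'' is exactly \Cref{lem:TerminationPantDDAE1}, i.e.\ the statement you would have to prove: the shifting step is not an instance of the classical Pantelides termination theorem, because its vertex classes group all derivative orders at a fixed shift, the variables $\shift{x_i}{-\tau}$ are never allowed to be matched, and \Cref{alg:DiffDuringShift} interleaves differentiations. The paper handles this with a bespoke argument: \Cref{lem:AugmentpathMSS} identifies the shifted subsets as \gls{MSS} with respect to $\Rs$, global nonsingularity with respect to $R_{\mathrm{equal}}$ guarantees a genuinely delayed variable ($\set(\hxb)\neq\emptyset$) whose shift produces a new highest-shift class, and \Cref{prop:PDDAEdiffDoesntEffectShift} shows the interleaved differentiations leave the shifting graph invariant. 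You cite \Cref{thm:linearIntegerProgram} for the interleaved differentiations, but that only says the integer programs are solvable; the invariance of the shifting graph is the fact you actually need here, and you never use it for this phase.

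For the differentiation phase your target, structural nonsingularity with respect to $\eqclass{\setvars}{\Rd}$, is too weak to invoke \Cref{thm:TerminationPDAE}: that theorem requires nonsingularity with respect to the coarser relation that also merges derivative orders, i.e.\ $\eqclass{\widetilde{\setvars}}{R_{\mathrm{equal}}}$ with $\widetilde{\setvars}$ as in \eqref{eq:setOfVarRestrictedToNonegativeShifts} -- this is precisely \Cref{lem:TerminationPantDDAE2}. A system with two equations both depending only on $x_1$ and $\dot{x}_1$ has a perfect matching in its differentiation graph (so it is nonsingular with respect to $\Rd$) but makes the differentiation step cycle indefinitely, so establishing a matching in the differentiation graph does not suffice. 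The paper instead deduces the needed $R_{\mathrm{equal}}$-nonsingularity directly from the terminal matching of the shifting step: after \Cref{thm:trimmedLinearization} every equation is matched to the highest-shift class of a \emph{distinct original variable}, which is an injective assignment into $\eqclass{\widetilde{\setvars}}{R_{\mathrm{equal}}}$. Finally, your parenthetical appeal to \Cref{prop:PDDAEdiffDoesntEffectShift} in this phase misreads it: the proposition says differentiation does not change the \emph{shifting} graph, not that the differentiation graph is untouched by differentiation. Your necessity direction is fine in spirit (it is the paper's informal remark preceding the theorem, and the paper obtains it more cleanly from \Cref{lem:TerminationPantDDAE1}), but the two sufficiency reductions need to be replaced by arguments of the type given in \Cref{lem:TerminationPantDDAE1,lem:TerminationPantDDAE2}.
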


We split the proof of \Cref{thm:TerminationPantDDAE3} in two parts, namely the shifting and differentiation step, which are very similar to the original Pantelides algorithm. Thus we can extend the termination proof presented in \cite{Pan88} to our setting. The main idea in \cite{Pan88} is to show that differentiating a \gls{MSS} subset (see \Cref{def:structuralSingular}) renders it structurally nonsingular and a structurally nonsingular subset remains structurally nonsingular after differentiation. Let us first show that under the assumptions of \Cref{thm:TerminationPantDDAE3} the shifting step terminates.  

\begin{lemma}
	\label{lem:TerminationPantDDAE1}
	Consider the \gls{DDAE} \eqref{eq:nlDDAE} with set of variables $\setvars$ and equivalence relation $R_{\mathrm{equal}}$ as defined in \eqref{eq:equivRelDDAE}. The shifting step of \Cref{alg:PantDDAE} applied to \eqref{eq:nlDDAE} terminates if and only if \eqref{eq:nlDDAE} is structurally nonsingular with respect to $\eqclass{\setvars}{R_{\mathrm{equal}}}$.
\end{lemma}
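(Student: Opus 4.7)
The plan is to mirror the termination argument for the classical Pantelides algorithm given in \Cref{thm:TerminationPDAE}, adapted to the shifting step by exploiting two facts: first, that the differentiations performed during the shifting step (via \Cref{alg:DiffDuringShift}) do not affect the shifting graph (\Cref{prop:PDDAEdiffDoesntEffectShift}), so termination can be reasoned about primarily in terms of shifts; second, that after removing edges to variables that are not highest shifts in the shifting graph (line \ref{PDAEn5} of \Cref{alg:PantDAENew}), each $R_{\mathrm{equal}}$-orbit of variables contributes exactly one remaining $R^{\mathrm{s}}$-equivalence class. This correspondence is the bridge between structural nonsingularity with respect to $R_{\mathrm{equal}}$ of the DDAE and the existence of a maximal matching in the reduced shifting graph.

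For the necessary direction, I would first assume the shifting step terminates. Then the reduced shifting graph of the (modified) DDAE admits a maximal matching, which, by K\"onig's theorem (equivalently, Hall's marriage condition), means that for every subset of equations, the cardinality of the set of adjacent $R^{\mathrm{s}}$-classes is at least the size of the subset. By the correspondence above, this coincides with structural nonsingularity of the modified DDAE with respect to $R_{\mathrm{equal}}$. Next I would show that shifting and differentiation of an equation never introduce variables in new $R_{\mathrm{equal}}$-classes, since $R_{\mathrm{equal}}$ identifies all shifts and derivatives of a given $\xi$; consequently the number of $R_{\mathrm{equal}}$-classes of variables appearing in any chosen subset of equations is invariant under the operations of the shifting step, while the equation count is clearly invariant. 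Pulling the nonsingularity of the modified DDAE back to the original through this invariance yields the claim.

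For the sufficient direction, I would proceed analogously to \cite{Pan88}. Assuming the original DDAE is structurally nonsingular with respect to $R_{\mathrm{equal}}$, the invariance established above ensures that every intermediate DDAE produced during the shifting step is nonsingular with respect to $R_{\mathrm{equal}}$ as well. I would then argue by contradiction: if the \code{Repeat}-loop of \Cref{alg:PantDAENew} for some index $r$ did not terminate, the algorithm would repeatedly encounter an exposed equation whose set $C_{F_j}$ (as in \Cref{lem:AugmentpathMSS}) is shifted again and again, witnessing a persistent MSS configuration in the shifting graph. Via the bridge established in the first paragraph, this would amount to a structurally singular subset of the intermediate DDAE with respect to $R_{\mathrm{equal}}$, contradicting the invariant. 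Hence the shifting step must terminate.

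The main obstacle will be rigorously establishing the correspondence between the $R^{\mathrm{s}}$-classes of the reduced shifting graph and the $R_{\mathrm{equal}}$-orbits of variables, and carrying the structural nonsingularity invariant cleanly through the combined operations of the shifting step, including the differentiations inserted by \Cref{alg:DiffDuringShift} to resolve implicit connections. These differentiations are benign for the shifting graph by \Cref{prop:PDDAEdiffDoesntEffectShift}, but they must still be bookkept at the level of the full DDAE graph so that the cycle argument in the sufficient direction correctly identifies the persistent MSS configuration across iterations and so that the invariant is not silently broken by the introduction of higher-order derivatives on the way.
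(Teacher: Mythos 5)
Your necessary direction is essentially sound and is even a reasonable alternative to the paper's treatment of that implication (the paper argues the contrapositive instead): termination gives a saturating matching, Hall's condition gives the counting inequalities for the modified system, and the observation that shifting and differentiating an equation never change the set of $R_{\mathrm{equal}}$-orbits of its variables lets you pull the inequalities back to \eqref{eq:nlDDAE}. Two caveats: during the shifting step equations are only replaced, never added, so the pull-back is legitimate there, but your auxiliary claim that ``each $R_{\mathrm{equal}}$-orbit contributes exactly one remaining $\Rs$-class'' after deleting non-highest shifts is false as stated: an orbit whose variable appears only as $\shift{\xi}{-\tau}$ contributes no class at all. This is not a cosmetic slip; the discrepancy between counting $\Rs$-classes at nonnegative shift and counting $R_{\mathrm{equal}}$-orbits is exactly what the algorithm exploits.

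The genuine gap is in your sufficient direction. Your contradiction step asserts that a non-terminating \code{Repeat}-loop exhibits a ``persistent \gls{MSS} configuration'' in the shifting graph and that, ``via the bridge,'' this amounts to a structurally singular subset with respect to $R_{\mathrm{equal}}$. That bridge does not exist: a subset can be \gls{MSS} with respect to $\Rs$ (after deleting variables that are not highest shifts) while being structurally nonsingular with respect to $R_{\mathrm{equal}}$ — the scalar example $0=\shift{x_1}{-\tau}+f_1$ is exactly of this kind, and it is the situation in which shifting is supposed to, and does, repair the deficiency. So ``the loop runs forever'' does not translate into $R_{\mathrm{equal}}$-singularity without further argument, and ``persistent'' is unsubstantiated because the graph changes at every shift. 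What is needed, and what the paper proves, is a progress argument in the style of \cite{Pan88}: (i) under $R_{\mathrm{equal}}$-nonsingularity, every \gls{MSS} subset identified in the shifting graph must contain at least one variable appearing only with negative shift (the set $\hxb\neq\emptyset$ in the paper's notation), and therefore after shifting it becomes structurally nonsingular with respect to $\Rs$ (a counting argument comparing $\eqclass{\cdot}{R_{\mathrm{equal}}}$ with $\eqclass{\cdot}{\Rs}$ before and after the shift); (ii) any structurally nonsingular subset disjoint from the shifted one remains nonsingular, because otherwise there would be an alternating path from the exposed equation into it, contradicting disjointness; and (iii) the interleaved differentiations are harmless by \Cref{prop:PDDAEdiffDoesntEffectShift}. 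These three facts, fed into the termination mechanism of \cite{Pan88}, are the heart of the proof, and none of them is established (or replaced by an equivalent measure-of-progress argument) in your proposal.
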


\begin{proof}
	Recall that in the shifting step we group variables that are shifting similar, i.e., we work with the equivalence relation $R_{\mathrm{s}}$ from \Cref{def:shiftDiffGraph}. By virtue of \Cref{lem:AugmentpathMSS}, any subset of equations of the \gls{DDAE} \eqref{eq:nlDDAE} that \Cref{alg:PantDDAE}, or more precisely \Cref{alg:PantDAENew}, identifies to be shifted is \gls{MSS} with respect to $\eqclass{\setvars}{R_{\mathrm{s}}}$. Suppose that the subset
	\begin{align}
	\label{eq:generalMsSS}
		 \soi{F}(t,\tx, \dot{\ox}, \shift{\hx}{-\tau} ) = 0, 
	\end{align} 
	with $\set(\tx,\ox,\hx) \subseteq \set(x)$ of the \gls{DDAE} \eqref{eq:nlDDAE} is such a set. Note that here the sets 
	\begin{displaymath}
		\set(\tx),\ \set(\ox),\ \text{and}\ \set(\hx)
	\end{displaymath}
	are not necessarily disjoint. More precisely, decompose $\hx = (\hxa,\hxb)$ such that
	\begin{displaymath}
		\set(\hx)  = \set(\hxa) \dot{\cup} \set(\hxb),\qquad \set(\hxb) \cap \set(\ox, \tx )  = \emptyset,\qquad \text{and}\qquad \set(\hxa) \subseteq \set(\ox, \tx).
	\end{displaymath}
	Let us assume first that the \gls{DDAE} \eqref{eq:nlDDAE} is structurally nonsingular with respect to $\eqclass{\setvars}{R_\mathrm{equal}}$. This implies that $\set(\hxb)\neq\emptyset$. Shifting \eqref{eq:generalMsSS} results in the equation $\shift{\soi{F}}{\tau}$ given by
	\begin{displaymath}
		 \soi{F}(t+\tau,\shift{\tx}{\tau},\shift{\dot{\ox}}{\tau},\hx) = 0.
	\end{displaymath}
	By construction we have
	\begin{align*}
		\left|\set(\shift{\soi{F}}{\tau})\right| & = \left|\set( \soi{F}) \right|  \le \left|\eqclass{\set \begin{pmatrix} \tx, \dot{\ox}, \shift{\hx}{-\tau}
		\end{pmatrix}}{R_{\mathrm{equal} }}  \right|= \left|\eqclass{\set \begin{pmatrix} \tx, \dot{\ox}, \shift{\hx_2}{-\tau} \end{pmatrix}}{R_{\mathrm{equal} }} \right|\\
		& = \left|\eqclass{\set \begin{pmatrix} \tx, \dot{\ox}, \shift{\hx_2}{-\tau} \end{pmatrix}}{\Rs} \right| = \left|\eqclass{\set \begin{pmatrix} \shift{\tx}{\tau}, \shift{\dot{\ox}}{\tau},\hx_2 \end{pmatrix}}{\Rs} \right|
	\end{align*}
	showing that $\shift{\soi{F}}{\tau}$ is structurally nonsingular with respect to the equivalence relation $\Rs$. Note that this argument is easily extended to subsets of the general equation \eqref{eq:DDAEshiftDiffVars} and we conclude that whenever a \gls{MSS} subset of equations is shifted during the shifting step it becomes structurally nonsingular. In \Cref{alg:PantDAENew} we do not only replace $\soi{F}$ with $\shift{\soi{F}}{\tau}$ but may have to differentiate some of the equations. However, \Cref{prop:PDDAEdiffDoesntEffectShift} implies that this does not affect the shifting graph and hence does not effect the structural nonsingularity. In order to show that the shifting step terminates it suffices to show that any subset $\set (\overline{F}) \subseteq \set (F)$ that is disjoint from $\set{(\soi{F})}$ and is structurally nonsingular before we shift \eqref{eq:generalMsSS} remains structurally nonsingular after shifting. The only possibility for $\set (\overline{F})$ to become structurally singular is if a subset of $\set (\overline{F})$ has a matching edge to some vertex in $ \eqclass{\set(\tx, \dot{\ox})}{R_s}$. This implies that there is an alternating path from the exposed equation in $\soi{F}$ to one of the equations in $\set(\overline{F})$, a contradiction to $\set(\soi{F})\cap\set(\overline{F}) = \emptyset$. We conclude that the shifting step terminates.
	
	Conversely, assume that \eqref{eq:nlDDAE} is structurally singular with respect to $\eqclass{\setvars}{R_{\mathrm{equal}}}$. Then at some point \Cref{alg:PantDDAE} identifies a subset \eqref{eq:generalMsSS} of \eqref{eq:nlDDAE} that is structurally singular with respect to $\eqclass{\setvars}{R_\mathrm{equal}}$. In this case, we have $\set(\hxb) = \emptyset$ and thus
	\begin{align*}
		\left|\set(\soi{F})\right| > \left|\eqclass{\set \begin{pmatrix} \tx, \dot{\ox}, \shift{\hx}{-\tau} \end{pmatrix}}{R_{\mathrm{equal} }}  \right|= \left|\eqclass{\set \begin{pmatrix} \tx, \dot{\ox}, \shift{\hx_2}{-\tau} \end{pmatrix}}{\Rs} \right| \ge \left|\eqclass{\set \begin{pmatrix} \tx, \dot{\ox}\end{pmatrix}}{\Rs} \right|.
	\end{align*}		
	In this case, shifting does not increase the set of equivalence classes and thus the set \eqref{eq:generalMsSS} is still structurally singular with respect to the relation $\Rs$ and thus results in an infinite loop.
\end{proof}

To show that also the differentiation step in \Cref{alg:PantDDAE} terminates, we want to apply \Cref{thm:TerminationPDAE}. For this we have to show that the set of equations that we obtain after applying the shifting step and the trimmed linearization is structurally nonsingular with respect to
$\eqclass{\widetilde{\setvars}}{R_\mathrm{equal}}$ with 
\begin{align}
	\label{eq:setOfVarRestrictedToNonegativeShifts}
	\widetilde{\setvars} := \left\{ \elementsetvars \in \setvars \mid (\elementsetvars, \elementsetvars) \in \Rd \right\}.
\end{align}
Note that $\widetilde{\setvars}$ does not include variables depending on $t-\tau$ and hence $\eqclass{\widetilde{\setvars}}{R_\mathrm{equal}}$ is different from $\eqclass{{\setvars}}{R_\mathrm{equal}}$.

\begin{lemma}
\label{lem:TerminationPantDDAE2}
Suppose that \eqref{eq:nlDDAE} is structurally nonsingular with respect to $\eqclass{\setvars}{R_{\mathrm{equal}}}$. Let $G = (\VE \dot{\cup} \VV, E)$ with 
\begin{align*}
 	\VE = \left\{ \shift{\Cof_1^{(\ell_1)} }{\com_1 \tau }, \dots,  \shift{\Cof_\Com^{(\ell_\Com)} }{\com_\Com \tau }   \right\}.
\end{align*}
denote the graph that results from applying the shifting step and the trimmed linearization of \Cref{alg:PantDDAE} to \eqref{eq:nlDDAE}. Define the set of equations 
\begin{equation}
	\label{eq:DDAEafterShifting}
	0 = \overline{F}(\overline{t},\ox, \dot{\ox}, \shift{x}{-\tau}) = [\overline{F}_i]_{i=1,\dots,M}(\overline{t},\ox,\dot{\ox},\shift{x}{-\tau})
\end{equation}
 with $m\defEqual \max_i (\com_i)$, $\overline{\xi} = \begin{bmatrix} \shift{\xi}{0 \tau} & \dots & \shift{\xi}{m \tau}  \end{bmatrix}$ for $\xi \in \{t,x\}$ by 
$\overline{F}_i \defEqual \shift{F_i}{\com_i}$.
Then \eqref{eq:DDAEafterShifting} is structurally nonsingular with respect to $ \eqclass{\widetilde{\setvars}}{R_{\mathrm{equal} }}$ with $\widetilde{\setvars}$ defined in \eqref{eq:setOfVarRestrictedToNonegativeShifts}. 
\end{lemma}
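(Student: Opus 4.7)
The proof plan is to leverage the maximal matching that the shifting step produces and to project it onto the equivalence classes $\eqclass{\widetilde{\setvars}}{R_{\mathrm{equal}}}$. By \Cref{lem:TerminationPantDDAE1}, the hypothesis that \eqref{eq:nlDDAE} is structurally nonsingular with respect to $\eqclass{\setvars}{R_{\mathrm{equal}}}$ guarantees that the shifting step of \Cref{alg:PantDDAE} terminates, so the shifting graph of the shifted system admits a maximal matching $\mathcal{M}$; \Cref{thm:trimmedLinearization} extends $\mathcal{M}$ to a maximal matching in the shifting graph of the system obtained after trimmed linearization. By construction of the shifting graph in \Cref{alg:PantDAENew}, every matched equivalence class is a highest shift, so all matched classes lie in $\eqclass{\widetilde{\setvars}}{\Rs}$.

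The key observation is that distinct $\Rs$-classes among the matched vertices have pairwise distinct base variables: the deletion step in \Cref{alg:PantDAENew} discards every $\Rs$-class that is not a highest shift, which for a fixed base $\xi$ leaves at most one class in the shifting graph. Hence the projection of $\mathcal{M}$ from $\eqclass{\widetilde{\setvars}}{\Rs}$ to $\eqclass{\widetilde{\setvars}}{R_{\mathrm{equal}}}$, which collapses different shifts and derivatives of the same base variable into a single class, remains injective. Each matched equation $\overline{F}_i$ therefore comes with a distinct $R_{\mathrm{equal}}$-class in $\eqclass{\widetilde{\setvars}}{R_{\mathrm{equal}}}$ together with a representative that actually appears in $\overline{F}_i$.

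With this injection in hand, structural nonsingularity follows directly. For an arbitrary subset $\hat{F} \subseteq \overline{F}$ with variable tuple $\widehat{\allvars}$, restricting $\mathcal{M}$ to $\hat{F}$ yields, via the same projection, an injection into $\eqclass{\widetilde{\setvars} \cap \set(\widehat{\allvars})}{R_{\mathrm{equal}}}$, whence
\begin{displaymath}
|\hat{F}| \le \left|\eqclass{\widetilde{\setvars} \cap \set(\widehat{\allvars})}{R_{\mathrm{equal}}}\right|,
\end{displaymath}
which is exactly the required structural nonsingularity.

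I expect the main technical point to be the bookkeeping required by the trimmed linearization step and by the differentiations performed during the shifting step (cf.~\Cref{subsec:DiffInShift}). The former introduces new base variables $y_{i_p}$ together with coupling equations; \Cref{thm:trimmedLinearization} ensures that these new equations are matched with the newly introduced highest shifts and therefore do not interfere with the base variables already matched. The latter replaces certain equations with their derivatives, but by \Cref{prop:PDDAEdiffDoesntEffectShift} these differentiations leave the shifting graph, and hence the matching, entirely unchanged. A further subtlety is that an equation $\overline{F}_i = \shift{F_i}{m_i \tau}$ still references $\shift{x}{(m_i-1)\tau}$, which fails to lie in $\widetilde{\setvars}$ when $m_i = 0$; however, the injection argument only counts variables coming from matched vertices, which are highest shifts and therefore automatically in $\widetilde{\setvars}$, so the delayed variables that drop out play no role in the count.
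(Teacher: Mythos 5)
Your proposal is correct and follows essentially the same route as the paper's own proof: invoke \Cref{lem:TerminationPantDDAE1} and \Cref{thm:trimmedLinearization} to obtain a maximal matching of every equation with a highest-shift (hence nonnegatively shifted) variable, and read off structural nonsingularity with respect to $\eqclass{\widetilde{\setvars}}{R_{\mathrm{equal}}}$ from that assignment. The paper states this in two sentences, whereas you additionally spell out why the projection from $\Rs$-classes to $R_{\mathrm{equal}}$-classes stays injective and why the counting inequality holds on arbitrary subsets; these details are consistent with, and a useful elaboration of, the paper's argument.
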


\begin{proof}
	Since \eqref{eq:nlDDAE} is structurally nonsingular with respect to $\eqclass{\setvars}{R_{\mathrm{equal}}}$  \Cref{alg:PantDDAE} up to Line 6 terminates (cf.\ \Cref{lem:TerminationPantDDAE1}, \Cref{thm:trimmedLinearization}). Using \Cref{thm:trimmedLinearization}, we can assign each equation $\overline{F}_i $ for $i=1, \dots M$ to a highest shift. In other words, we can assign each equation to a variable which is shifted at least 0 times which yields directly the structural nonsingularity of $\oF$ with respect to  $\eqclass{\widetilde{\setvars}}{R_{\mathrm{equal} }}$.  
\end{proof}

Combining the previous results, we are now able to proof \Cref{thm:TerminationPantDDAE3}.

\begin{proof}[Proof of \Cref{thm:TerminationPantDDAE3}]
If \eqref{eq:nlDDAE} is structurally nonsingular with respect to $\eqclass{\setvars}{R_{\mathrm{equal}}}$  \Cref{alg:PantDDAE} up to Line 6 terminates (cf.\ \Cref{lem:TerminationPantDDAE1}, \Cref{thm:trimmedLinearization}). Since the resulting shifted equations \eqref{eq:DDAEafterShifting} are structurally nonsingular with respect to $\eqclass{\widetilde{\setvars}}{R_{\mathrm{equal} }}$, we can use \Cref{thm:TerminationPDAE} to conclude that \Cref{alg:PantDDAE} terminates. The converse direction follows from \Cref{lem:TerminationPantDDAE1}. 
\end{proof}

We conclude our analysis by investigating in which cases \Cref{alg:PantDDAE} determines that no equation needs to be shifted. The following result shows that this is the case if the \gls{DDAE} \eqref{eq:nlDDAE} is structurally nonsingular with respect to the set of equivalence classes that are obtained by using the relation $R_{\mathrm{equal}}$ with a restricted set of variables.

\begin{theorem}
	\label{thm:noShift}
	If the set of equations \eqref{eq:nlDDAE} is structurally nonsingular with respect to $\eqclass{\overline{\setvars}}{R_{\mathrm{equal}}}$ with
	\begin{align*}
		\overline{\setvars} \defEqual \left\{ \xi \in \set\left(x, \dot{x}\right) \,\left|\, \xi \text{ appears in } F \right.\right\} 
	\end{align*} 
	then \Cref{alg:PantDDAE} applied to \eqref{eq:nlDDAE} terminates and determines that no equation is shifted. 
	In this case also \Cref{alg:PantDAE} applied to \eqref{eq:nlDDAE} where we replace $\shift{x}{-\tau}$ with a function parameter $\lambda$ terminates and the resulting graphs of both algorithms are isomorphic.
\end{theorem}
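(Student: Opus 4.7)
The strategy is to show first that the shifting step of \Cref{alg:PantDDAE} performs no work, and then that the remaining differentiation step agrees with \Cref{alg:PantDAE} applied to the parameter-replaced \gls{DAE}.

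For the first part, I would examine the shifting graph $G^{\mathrm{s}}$ built in lines~\ref{PDDAE1}--\ref{PDDAE4}. Since the \gls{DDAE} \eqref{eq:nlDDAE} contains at most first derivatives and no shifts beyond $-\tau$, the $\Rs$-equivalence classes at shift $0$ are exactly the sets $\{x_i,\dot{x}_i\}\cap \overline{\setvars}$, and these are in bijection with the classes of $\eqclass{\overline{\setvars}}{R_{\mathrm{equal}}}$. After the highest-shift pruning in lines~\ref{PDAEn5}--\ref{PDAEn6} of \Cref{alg:PantDAENew}, the pruned shifting graph consists of exactly these shift-$0$ classes together with, at most, extra shift-$(-1)$ classes from base variables that occur only under the delay. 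Because adding variables only enlarges the neighborhood of any subset of equations, structural nonsingularity of \eqref{eq:nlDDAE} with respect to $\eqclass{\overline{\setvars}}{R_{\mathrm{equal}}}$ transfers to structural nonsingularity of the pruned shifting graph. The contrapositive of \Cref{lem:AugmentpathMSS} then rules out any exposed equation during the augmenting-path search, so \Cref{alg:PantDAENew} in shifting mode never enters its shift branch, never invokes \Cref{alg:DiffDuringShift}, and leaves the \gls{DDAE} untouched by the time line~\ref{PDDAE5} returns. Since no higher-order derivatives have been generated, the trimmed linearization step is vacuous too.

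For the second part, I would compare the differentiation graph $G^{\mathrm{d}}$ constructed in lines~\ref{PDDAE6}--\ref{PDDAE9} with the graph of the \gls{DAE} $F(t,x,\dot x,\lambda)=0$ in which $\shift{x}{-\tau}$ has been frozen to a parameter $\lambda$. The variable set of $G^{\mathrm{d}}$ is $\eqclass{\widetilde{\setvars}}{\Rd}$ with $\widetilde{\setvars}$ as in \eqref{eq:varExcludingNegTau}; because no shift has occurred, every element of $\widetilde{\setvars}$ sits at shift $0$, so each $\Rd$-class is a singleton and the map $\{x_i^{(p)}\}\mapsto x_i^{(p)}$ is a graph isomorphism onto the \gls{DAE} graph of the parameter-replaced system ($\lambda$ being no variable on either side). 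The parameter-replaced \gls{DAE} has variable set $\overline{\setvars}$, so the hypothesis is exactly its structural nonsingularity with respect to $\eqclass{\overline{\setvars}}{R_{\mathrm{equal}}}$, and \Cref{thm:TerminationPDAE} yields termination of \Cref{alg:PantDAE}. Running \Cref{alg:PantDAENew} in differentiation mode on $G^{\mathrm{d}}$ executes exactly the same sequence of augmenting-path searches and differentiation updates as \Cref{alg:PantDAE} on its isomorphic input, so \Cref{alg:PantDDAE} terminates and its output graph is isomorphic to the output of \Cref{alg:PantDAE}.

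The delicate point is the bookkeeping around the shift-$(-1)$ classes that may survive pruning in the shifting graph. I would handle this by observing that they only furnish additional, optional matching targets for the augmenting-path search and therefore cannot induce a structurally singular subset where none existed before. Once this is verified, the rest of the argument is purely structural: the fact that shifting never occurred means that the differentiation graph of the \gls{DDAE} coincides, vertex for vertex and edge for edge, with the \gls{DAE} graph of the parameter-replaced system, and the stated isomorphism is immediate.
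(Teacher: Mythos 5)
Your proposal is correct and follows essentially the same route as the paper's proof: you transfer the structural nonsingularity with respect to $\eqclass{\overline{\setvars}}{R_{\mathrm{equal}}}$ to the pruned shifting graph (the paper does this by the explicit subset count $|\set(\widehat{F})|\le|\eqclass{\set(\ya,\yb,\dot{\yb},\dot{\yc})}{\Rs}|$, you via the bijection with the shift-zero $\Rs$-classes together with \Cref{lem:AugmentpathMSS}), conclude that no equation is shifted and that the trimmed linearization is vacuous, and then identify the differentiation step with \Cref{alg:PantDAE} on the parameter-replaced system. The only minor deviations are that you obtain termination from \Cref{thm:TerminationPDAE} applied to the isomorphic parameter-replaced system rather than, as the paper does, from \Cref{thm:TerminationPantDDAE3} via the implication to structural nonsingularity with respect to $\eqclass{\setvars}{R_{\mathrm{equal}}}$, and that your care about surviving shift-$(-1)$ classes is harmless here (under the paper's convention delayed variables are never matched, and in any case extra matching targets can only prevent shifts), so both arguments are valid.
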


\begin{proof}
	First observe that the structural nonsingularity of \eqref{eq:nlDDAE} with respect to $\eqclass{\overline{\setvars}}{R_\mathrm{equal}}$ implies structural nonsingularity with respect to $\eqclass{\setvars}{R_\mathrm{equal}}$. Thus \Cref{thm:TerminationPantDDAE3} ensures that \Cref{alg:PantDDAE} terminates if applied to \eqref{eq:nlDDAE}. Consider a subset
	\begin{displaymath}
		\widehat{F}(t, \ya, \yb, \dot{\yb}, \dot{\yc}, \shift{\hx}{-\tau}) = 0
	\end{displaymath}
	of the \gls{DDAE} \eqref{eq:nlDDAE} with $\set(\ya) \dot{\cup} \set(\yb) \dot{\cup} \set(\yc) \subseteq \set(x)$ and $\set(\hx) \subseteq \set(x) $. The structural nonsingularity with respect to $\eqclass{\overline{\setvars}}{R_{\mathrm{equal}}}$ implies
	\begin{align*}
		\left| \set(\widehat{F})  \right|  \le \left| \eqclass{\set\left(\ya, \yb, \dot{\yb}, \dot{\yc} \right)}{R_\mathrm{equal} }  \right|
   =  \left| \eqclass{\set\left( \ya, \yb, \dot{\yb}, \dot{\yc} \right)}{\Rs }  \right| 
\end{align*}
which is the structural nonsingularity of $\hF$ after deleting variables with lower shifts in the shifting graph. Thus no equation is shifted.
 We conclude that \Cref{alg:PantDDAE} can be reduced to the differentiation step. Within the differentiation step, \Cref{alg:PantDAE} and \Cref{alg:PantDAENew} coincide and thus \Cref{alg:PantDAE} terminates with the same result as \Cref{alg:PantDDAE}.
\end{proof}

We conclude our analysis by emphasizing that the Pantelides algorithm for \glspl{DDAE} suffers from the same problems as the original Pantelides algorithm (see for instance \Cref{ex:failurePantelidesDAE} and the discussion thereafter). In particular, there is no guarantee that the correct number of differentiations and shifts is identified.

\begin{example}
Even though the solution of the \gls{DDAE}
	\begin{align*}
		0  = x_4 + \shift{x_1}{-\tau} + f_1,\quad
		\dot{x}_2 & = x_1 + f_1, \quad
		0  = x_2 + x_4 + f_3, \quad
		0  = x_2 + \shift{x_3}{-\tau} + f_4
	\end{align*}
	depends on the shifted and differentiated equations $F_1, F_3, F_4$, the Pantelides algorithm for \glspl{DDAE} determines that we have to shift equations $F_1, F_3$, and $F_4$ but do not have to differentiate any equation. 
\end{example}
 
\section{Summary}
We have presented a method (\Cref{alg:PantDDAE}) to determine which equations of the \gls{DDAE} \eqref{eq:nlDDAE} need to be shifted and which equations need to be differentiated. The algorithm extends the Pantelides algorithm \cite{Pan88} for \glspl{DAE} to the \gls{DDAE} case. The main idea that enables the generalization is the introduction of equivalence classes in the bipartite graph associated with the \gls{DDAE} \eqref{eq:nlDDAE}. For further details see \Cref{def:graphDAE}. The Pantelides algorithm for \glspl{DDAE} is divided into the shifting step and the differentiation step. We prove (\Cref{prop:PDDAEdiffDoesntEffectShift}) that differentiation does not affect the shifting graph and hence start with the shifting step. It turns out that already in the shifting step we have to differentiate some equations. To obtain the required number of differentiations for each equation, we may have to solve several linear systems, see \cref{subsec:DiffInShift} for further details. We presented a necessary and sufficient condition for the termination of \Cref{alg:PantDDAE} in \Cref{thm:TerminationPantDDAE3}. We foresee that we can combine our framework with a dummy derivative approach \cite{MatS93} and an algebraic approach \cite{SchS16a} such that \Cref{alg:PantDDAE} can be used with numerical time integration methods.

Since our framework builds upon the Pantelides algorithm, it has similar strengths and weaknesses. One of the big advantages of our algorithm (in contrast to the algebraic index reduction procedure in \cite{HaM16}) is that the underlying graph-theoretical tools do not suffer from numerical rank decisions and can be computed efficiently also in a large-scale setting.  On the other hand, our algorithm is not invariant under equivalence transformations and thus may fail to determine the correct number of shifts and differentiations. In the \gls{DAE} case, this fact is accounted for in the $\Sigma$-method \cite{Pry01, PryNeTa15} by including a success check. It is ongoing research to develop such a success check for \glspl{DDAE}.

\bibliographystyle{plain}
\bibliography{ms}    

\vfill
\begin{minipage}{\linewidth}
	{\printglossary[type=acronym,title=List of Abbreviations,toctitle=List of Abbreviations]}
\end{minipage}

\end{document}